\renewcommand{\epsilon}{\varepsilon}
\renewcommand{\phi}{\varphi}
\renewcommand{\rho}{\varrho}
\newtheorem{Def}{Definition}[section]
\newenvironment{definition}{\begin{Def} \rm}{\end{Def}}
\newtheorem{lemma}[Def]{Lemma}
\newtheorem{proposition}[Def]{Proposition}
\newtheorem{theorem}[Def]{Theorem}
\newtheorem{example}[Def]{Example}
\newtheorem{remark}[Def]{Remark}
\newcommand{\komma}{,\hspace{0.3em}}
\newcommand{\id}{\text{id}}
\renewcommand{\leq}{\leqslant}
\renewcommand{\geq}{\geqslant}
\renewcommand{\emptyset}{\varnothing}
\newenvironment{smm}{\scriptsize \begin{pmatrix}}{\end{pmatrix}}
\renewcommand{\vector}[1]{{\scriptsize \begin{pmatrix} #1 \end{pmatrix}}}
\newcommand{\abs}[1]{{\left| #1 \right|}}
\newcommand{\Naturals}{{\mathbb N}}
\newcommand{\Reals}{{\mathbb R}}
\newcommand{\notperp}{\mathbin{\not\perp}}
\renewcommand{\c}{^\perp}
\newcommand{\cc}{^{\perp\perp}}
\newcommand{\ce}[1]{^{\perp_{#1}}}
\newcommand{\cce}[1]{^{\perp_{#1}\perp_{#1}}}
\newcommand{\herm}[2]{\left( #1 , #2 \right)}
\newcommand{\lin}[1]{[#1]}
\newcommand{\withoutzero}{^{\raisebox{0.2ex}{\scalebox{0.4}{$\bullet$}}}}
\newcommand{\downset}{{\downarrow}}
\newcommand{\U}{\mathbf U}
\renewcommand{\O}{\mathbf O}
\newcommand{\R}{\mathbf O}
\newcommand{\SO}{\mathbf{SO}}
\renewcommand{\R}{\mathbf R}
\newcommand{\G}{\mathbf G}
\newcommand{\Aut}{\mathbf O}
\newcommand{\similar}{\mathbin{\approx}}
\newcommand{\nsimilar}{\mathbin{\not\approx}}
\newcommand{\closure}{^{\vee}}
\begin{document}

\title{Transitivity and homogeneity of orthosets \\ and the real Hilbert spaces}

\author{Thomas Vetterlein}

\affil{\footnotesize
Institute for Mathematical Methods in Medicine and Data Based Modeling, \authorcr
Johannes Kepler University, Altenberger Stra\ss{}e 69, 4040 Linz, Austria \authorcr
{\tt Thomas.Vetterlein@jku.at}}

\date{\today}

\maketitle

\begin{abstract}\parindent0pt\parskip1ex

\noindent\vspace{-5ex}

An orthoset (also called an orthogonality space) is a set $X$ equipped with a symmetric and irreflexive binary relation $\perp$, called the orthogonality relation. In quantum physics, orthosets play a central role. In fact, a Hilbert space gives rise to an orthoset in a canonical way and can be reconstructed from it.

A complex Hilbert space can be seen as a real Hilbert space endowed with a complex structure. This fact motivates us to explore characteristic features of real Hilbert spaces by means of the abelian groups of rotations of a plane. Accordingly, we consider orthosets together with the groups of automorphisms that keep the orthogonal complement of a given pair of distinct elements fixed. We establish that, under a transitivity and a homogeneity assumption, an orthoset arises from a projective (anisotropic) Hermitian space.

To find conditions under which the latter's scalar division ring is $\Reals$ is difficult in the present framework. However, restricting considerations to divisible automorphisms, we can narrow down the possibilities to positive definite quadratic spaces over an ordered field. The further requirement that the action of these automorphisms is quasiprimitive implies that the scalar field is a subfield of~$\Reals$. 

{\it Keywords:} Orthoset; orthogonality space; real Hilbert space; homogeneously transitive orthoset; Hermitian space; divisibly transitive orthoset; positive definite quadratic space

{\it MSC:} 81P10; 06C15; 46C05

\mbox{}\vspace{-2ex}

\end{abstract}

\section{Introduction}
\label{sec:Introduction}

An orthoset is a pair $(X,\perp)$, where $X$ is a set and $\perp$ is a symmetric, irreflexive binary relation on $X$. Elements $e$ and $f$ such that $e \perp f$ are called orthogonal and accordingly orthosets are also referred to as orthogonality spaces. Introduced by David Foulis and his collaborators, orthosets can be seen as an abstract version of the Hilbert space model underlying quantum physics \cite{Dac,Wlc}. Indeed, the typical example is $(P(H), \perp)$, where $P(H)$ is the collection of one-dimensional subspaces of a Hilbert space $H$ and $\perp$ is the usual orthogonality relation. Moreover, Hilbert spaces are determined in a unique way by their associated orthosets.

In the context of the foundations of quantum mechanics, orthosets have been investigated by numerous authors, see, e.g., \cite{Bru,Fin,HePu,Mac,Rod,Rum}. To establish conditions under which an orthoset originates from a Hilbert space, it seems natural to take advantage of the close relationship with lattice theory. Indeed, with any orthoset $(X,\perp)$ we may associate the complete ortholattice of orthoclosed subsets, denoted by ${\mathcal C}(X)$, and the task becomes to ensure that ${\mathcal C}(X)$ is isomorphic to the lattice of closed subspaces of a Hilbert space. But the lattice-theoretic approach to the foundations of quantum mechanics has a long tradition, going back to Birkhoff and von Neumann's seminal work \cite{BiNe}, and led to lattice-theoretical characterisations of the Hilbert space at least in the infinite-dimensional case, see, e.g., \cite{Wlb,Hol2}.

Also the present paper aims at improving our understanding of the basic quantum-physical model and we take up the idea of reducing the Hilbert space to its orthogonality relation. Lattice theory will again play a key role and we should in fact not claim that the approach adopted here differs fundamentally from previous research lines. However, we do wish to say that our point of view is in some respect uncommon. We do not view orthosets, and hence ortholattices, as the central entity around which the remaining structure is built. We rather assign to orthosets the role of underlying sets of transformation groups, the group action being required to respect orthogonality. The elements of the orthosets are not really assigned any specific meaning. What rather matters in our eyes is the notion of ``change'' represented by the action of a group. The orthogonality relation furthermore prescribes which actions can be combined and might be thought of as expressing the ``independence of changes''. Accordingly, the complex Hilbert space is not thought of as arising from a lattice of propositions, but as an entity describing ``changes'' in accordance with given independence demands.

These considerations have motivated us to explore, not the complex but, the projective real Hilbert space and its groups of simple rotations. The latter are meant to be the groups of rotations of some two-dimensional subspace. From the intuitive point of view, matters simplify considerably when compared to the complex case. At least in the finite-dimensional case, a real Hilbert space can conveniently be conceived as an $n$-sphere, opposite points being identified. Moreover, an $n$-sphere has the intuitively obvious property to allow continuous transitions of some point to another one, leaving the points orthogonal to the starting and destination points fixed. Although complex Hilbert spaces will not be discussed here, it should be clear that they can be dealt with in the present framework as well, the approach being suitably extended. At least in principle, all what we need to add is the requirement that the considered symmetries are compatible with a complex structure.

Let us summarise our procedure. Let $\Aut(X)$ be the group of symmetries of an orthoset $(X,\perp)$. For each pair of distinct elements $e$ and $f$ of $X$, we consider the subgroup $G_{ef}$ of $\Aut(X)$ that consists of the automorphisms leaving all elements orthogonal to $e$ and $f$ fixed. We require, first, the transitivity of $(X,\perp)$: some element of $G_{ef}$ should map $e$ to $f$. We require, second, the homogeneity of $(X,\perp)$: any two subgroups $G_{ef}$ and $G_{e'f'}$, where $e \neq f$ and $e' \neq f'$, are conjugate via an automorphism mapping $e$ to $e'$. It turns out that these conditions are already sufficient to ensure that there is a Hermitian space $H$ such that $(X,\perp)$ is isomorphic to $(P(H),\perp)$.


In a further step, we consider, instead of the whole group $G_{ef}$, the set $R_{ef}$ of automorphisms divisible in $G_{ef}$. Requiring $R_{ef}$, for any distinct $e$ and $f$, to be an abelian subgroup of $\Aut(X)$ and making similar assumptions as before, we again have that $(X,\perp)$ is representable by means of a Hermitian space. Now, however, we can say much more: we can show that the scalar division ring is a field (i.e., commutative), endowed with the identity involution, and formally real. In fact, our refined representation theorem is based on positive-definite quadratic spaces over ordered fields.

Inner-product spaces of this kind might resemble to a good extent the Hilbert spaces over $\Reals$. However, it might be illusionary to expect that, in the framework considered here, there are natural conditions ensuring that the scalar field of a quadratic space actually coincides with $\Reals$. Our concerns relate in particular to the finite-dimensional case. However, we shall consider the following condition, fulfilled in any real Hilbert space $H$ of dimension $\geq 3$: if $U$ is a non-trivial simple rotation of $H$, then the conjugates of $U$ generate a subgroup of the orthogonal group that acts on $P(H)$ transitively. In other words, the group generated by the simple rotations acts quasiprimitively on $P(H)$; see, e.g., \cite{Prae}. Adding this condition, we have that the field of scalars does not contain non-zero infinitesimals and is thus a subfield of $\Reals$. We note that the quasiprimitivity of a group action is implied by its primitivity, a property that we could have employed alternatively; cf.\ \cite{Vet1}.

The paper is structured as follows. Section \ref{sec:atom-spaces} addresses the correspondence between orthosets and ortholattices. We include a discussion of the more general question in which way and under which conditions complete atomistic lattices may be represented by means of a suitable structure on the collection of their atoms. We consider first the case of lattices, then the case of ortholattices. In Section \ref{sec:subspace-lattices}, we deal with the correspondence between lattices and linear spaces. Again, we first review the case of lattices, then the case of ortholattices. In the latter case, a well-known theorem characterises the lattice of subspaces of an (anisotropic) Hermitian space as a complete, irreducible, AC ortholattice. We present a modified version of this theorem, because we need a formulation that involves properties of the finite lattice elements only. Finally, it should be noted that the correspondences discussed in Sections \ref{sec:atom-spaces} and \ref{sec:subspace-lattices} can not easily be extended to a categorical framework. The situation is more transparent when restricting to automorphisms and in Section \ref{sec:automorphisms} we recall shortly some relevant facts.

The lengthy preparations are the basis for tackling our actual aim. In Section \ref{sec:homogeneous-transitivity}, we characterise Hermitian spaces by means of groups acting on orthosets. The relevant orthosets are called homogenously transitive. We note that we generally assume the orthosets to have rank $\geq 4$, but otherwise the rank is not presupposed to be either finite or infinite. In Section \ref{sec:divisible-transitivity}, we refine our results in the sense that we are more specific about the scalar division ring. We show that so-called divisibly transitive orthosets are associated with positive-definite quadratic spaces. Finally, in Section \ref{sec:infinitesimals}, we consider what we call the rotation group of a divisibly transitive orthoset $(X,\perp)$. Under the hypothesis that this group acts quasiprimitively on $X$, we show that $(X,\perp)$ corresponds to a quadratic space over a subfield of the reals.

\section{Atomistic lattices and their atom spaces}
\label{sec:atom-spaces}

A central issue in this paper is the interplay between orthosets on the one hand and inner-product spaces on the other hand. We may say that lattices act as a ``mediator'' between these two sorts of structures. Indeed, orthosets lead to ortholattices, and ortholattices of a certain kind are associated with inner-product spaces.

In this section, we shall compile basic definitions and facts concerning the former issue. In view of the needs of subsequent considerations, we adopt, however, a wider perspective: we start by discussing lattices (without an orthocomplementation) and their atom spaces, and we turn afterwards to ortholattices and their associated orthosets. For any further details, we refer the reader to Maeda and Maeda's monograph \cite{MaMa}.

\subsubsection*{Reconstructing lattices from their atom spaces}

A lattice $L$ is called {\it atomistic} if any element is the join of atoms. For an atomistic lattice $L$, the collection of atoms of $L$ will be denoted by ${\mathcal A}(L)$, called the {\it atom space} of $L$.

Let $L$ be a complete atomistic lattice. The question seems natural whether ${\mathcal A}(L)$ can be equipped with a suitable structure that uniquely determines $L$. Sending each $a \in L$ to $\omega(a) = \{ p \in {\mathcal A}(L) \colon p \leq a \}$, we get an order embedding of $L$ in the powerset of ${\mathcal A}(L)$. In order to answer our question, it would thus be useful to know a way of characterising the image on $\omega$.

A possibility is to define a closure operation on ${\mathcal A}(L)$; see, e.g., \cite{Ern}. For $A \subseteq {\mathcal A}(L)$, let $A\closure = \{ p \in {\mathcal A}(L) \colon p \leq \bigvee A \}$ and call $A \subseteq {\mathcal A}(L)$ {\it supclosed} if $A$ is closed w.r.t.~$\closure$, that is, if $A\closure = A$. Ordered by set-theoretic inclusion, the collection ${\mathcal C}({\mathcal A}(L))$ of supclosed subsets of ${\mathcal A}(L)$ is a complete lattice: for $A_\iota \in {\mathcal C}({\mathcal A}(L))$, $\iota \in I$, we have that $\bigcap_\iota A_\iota$ is the infimum in ${\mathcal C}({\mathcal A}(L))$ and $(\bigcup_\iota A_\iota)\closure$ is the supremum. This lattice is isomorphic to $L$. Indeed, we easily confirm that $\omega$ establishes an isomorphism between $L$ and ${\mathcal C}({\mathcal A}(L))$. 

In general, however, this observation will not help us to reduce $L$ to a simpler structure. But we may check to which extent the finitary version of $\closure$ can describe $L$. For $A \subseteq {\mathcal A}(L)$, let $A^- = \{ q \in {\mathcal A}(L) \colon q \leq p_1 \vee \ldots \vee p_k \text{ for some } p_1, \ldots, p_k \in A \}$. A set closed w.r.t.\ $^-$ is called a {\it subspace} of $L$ and we denote the complete lattice of subspaces by ${\mathcal S}({\mathcal A}(L))$; cf., e.g., \cite[(15.1)]{MaMa}.

A complete lattice $L$ is called {\it compactly atomistic} if $L$ is atomistic and, for any atoms $r$ and $p_\iota$, $\iota \in I$, such that $r \leq \bigvee_{\iota \in I} p_\iota$, there is a finite subset $I_0 \subseteq I$ such that $r \leq \bigvee_{\iota \in I_0} p_\iota$. It is immediate that in this case, the two closure operations on ${\mathcal A}(L)$ coincide, that is, ${\closure} = {^-}$. Recall furthermore that an element $a$ of a lattice $L$ is {\it finite} if $a$ is either the bottom element or the join of finitely many atoms. The set of finite elements of $L$ is denoted by ${\mathcal F}(L)$. Clearly, ${\mathcal F}(L)$ is a join-subsemilattice of $L$.

\begin{lemma} \label{lem:LAL}
Let $L$ be a complete atomistic lattice.
\begin{itemize}

\item[\rm (i)] The complete lattice ${\mathcal S}({\mathcal A}(L))$ is compactly atomistic. Furthermore, ${\mathcal C}({\mathcal A}(L))$ is a subposet of ${\mathcal S}({\mathcal A}(L))$. In both lattices, the infima are given by set-theoretic intersection. The finite elements of both lattices coincide: the atoms are the singletons $\{p\}$, $p \in {\mathcal A}(L)$ and the finite elements are those of the form $\{ p \in {\mathcal A}(L) \colon p \leq a \}$, $a \in {\mathcal F}(L)$. The finite suprema of finite elements coincide in both lattices.

Consequently, the map
\begin{equation} \label{fml:omega}
\omega \colon L \to {\mathcal S}({\mathcal A}(L)) \komma a \mapsto \{ p \in {\mathcal A}(L) \colon p \leq a \}
\end{equation}
is an order embedding preserving arbitrary meets. Restricted to the finite elements, $\omega$ establishes an isomorphism between the join-semilattices ${\mathcal F}(L)$ and ${\mathcal F}({\mathcal S}({\mathcal A}(L)))$.

\item[\rm (ii)] Assume that $L$ is compactly atomistic. Then ${\mathcal S}({\mathcal A}(L)) = {\mathcal C}({\mathcal A}(L))$ and $\omega \colon L \to {\mathcal S}({\mathcal A}(L))$ is an isomorphism.

\end{itemize}
\end{lemma}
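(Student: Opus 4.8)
The plan is to prove the two parts essentially by unwinding the definitions and then observing that the general lattice-theoretic facts reduce, after passing to atoms, to set-theoretic bookkeeping about the finitary closure~$^-$.

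For part~(i), I would first recall that $A^- $ is a genuine finitary closure operator on ${\mathcal A}(L)$: it is extensive, monotone, idempotent, and $A^-$ is the union of $\{p_1,\dots,p_k\}^-$ over finite subsets of~$A$. From this it is standard that ${\mathcal S}({\mathcal A}(L))$, ordered by inclusion, is a complete lattice with meets given by intersection and that it is algebraic; the compact elements are exactly the sets of the form $\{p_1,\dots,p_k\}^-$ for finitely many atoms. To identify these compact elements with the finite elements of~$L$, I would use the correspondence $a \mapsto \omega(a)$: for $a = p_1 \vee \dots \vee p_k \in {\mathcal F}(L)$ one checks $\omega(a) = \{p_1,\dots,p_k\}^-$ directly from the definition of~$^-$, and conversely every compact element of ${\mathcal S}({\mathcal A}(L))$ arises this way. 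The claim that the atoms of ${\mathcal S}({\mathcal A}(L))$ are the singletons $\{p\}$ is immediate since $\{p\}^- = \{p\}$ (a single atom cannot lie strictly below another atom, and below a join of one atom). That ${\mathcal C}({\mathcal A}(L))$ is a subposet follows because every supclosed set is $^-$-closed (as $A^- \subseteq A\closure$ always), and the finite elements agree because on a finite join of atoms $^-$ and $\closure$ give the same set. The asserted preservation of finite suprema of finite elements in both lattices is then just the computation $\omega(a \vee b) = (\omega(a) \cup \omega(b))^-$ for $a,b \in {\mathcal F}(L)$, which again is a direct consequence of the description of~$^-$. Finally, $\omega$ is an order embedding preserving arbitrary meets because $p \le \bigwedge_\iota a_\iota$ iff $p \le a_\iota$ for all~$\iota$, i.e.\ $\omega(\bigwedge a_\iota) = \bigcap \omega(a_\iota)$; the restriction to ${\mathcal F}(L)$ is a join-semilattice isomorphism onto ${\mathcal F}({\mathcal S}({\mathcal A}(L)))$ by the previous identifications, it being bijective onto the compact elements and compatible with finite joins.

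For part~(ii), when $L$ is compactly atomistic the two closure operations coincide by the remark preceding the lemma, so ${\mathcal S}({\mathcal A}(L)) = {\mathcal C}({\mathcal A}(L))$; and since we already noted that $\omega$ establishes an isomorphism $L \cong {\mathcal C}({\mathcal A}(L))$ in the discussion above (this uses nothing but completeness and atomisticity), $\omega \colon L \to {\mathcal S}({\mathcal A}(L))$ is an isomorphism. Alternatively, one can argue that $\omega$ is an order embedding preserving arbitrary meets by part~(i), and that it preserves arbitrary joins precisely because compact atomisticity guarantees $p \le \bigvee_\iota a_\iota$ implies $p \le \bigvee_{\iota \in I_0} a_\iota$ for some finite~$I_0$, which translates into $\omega(\bigvee a_\iota) = (\bigcup \omega(a_\iota))^- = (\bigcup \omega(a_\iota))\closure$; surjectivity onto ${\mathcal S}({\mathcal A}(L))$ follows since any $^-$-closed set~$A$ equals $\omega(\bigvee A)$ in this situation.

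The proof involves no deep obstacle; the only point that requires genuine care rather than routine verification is the identification of the \emph{finite} (equivalently compact) elements of ${\mathcal S}({\mathcal A}(L))$ with the image under $\omega$ of ${\mathcal F}(L)$, together with the verification that finite joins are computed the same way in ${\mathcal C}({\mathcal A}(L))$ and in ${\mathcal S}({\mathcal A}(L))$ even though the two lattices need not agree at the infinite level. I would make sure to state clearly at the outset that $^-$ is finitary, since every subsequent identification of finite elements rests on that fact.
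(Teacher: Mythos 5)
Your argument is correct. The paper offers no internal proof of this lemma --- it simply cites Maeda and Maeda, (15.5) --- so there is no in-text argument to compare against; what you have written is essentially the standard verification that the cited result encapsulates, and it is sound. Your decomposition puts the weight in the right places: everything rests on the fact that $^-$ is a finitary closure operator, so that ${\mathcal S}({\mathcal A}(L))$ is an algebraic complete lattice with intersections as meets and with compact elements exactly the closures of finite sets of atoms, i.e.\ the sets $\omega(p_1 \vee \dots \vee p_k)$; the identification of these with the finite elements of ${\mathcal C}({\mathcal A}(L))$ and the computation $\omega(a \vee b) = (\omega(a) \cup \omega(b))^-$ for $a, b \in {\mathcal F}(L)$ are the two steps that genuinely matter, and you single out both. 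Two small points deserve an explicit line in a full write-up: first, $\{p\}^- = \{p\}$ uses that $p$ is an atom of $L$, so that no other atom of $L$ lies below it; second, in part (ii) the definition of compact atomisticity is phrased for families of \emph{atoms}, so to get $\omega\bigl(\bigvee_\iota a_\iota\bigr) = \bigl(\bigcup_\iota \omega(a_\iota)\bigr)^-$ for arbitrary $a_\iota$ you should first replace each $a_\iota$ by the set of atoms below it, which atomisticity of $L$ permits. Neither observation is a gap in your reasoning, merely a line each of added justification.
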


\begin{proof}
Cf.~\cite[(15.5)]{MaMa}.
\end{proof}

Under the assumption of compact atomisticity, a lattice $L$ can thus be described by means of its atom space ${\mathcal A}(L)$ equipped with the closure operator $^-$. In the general case, we may describe in this way at least the finite part of $L$.

We shall now go one step further and replace the closure operator $^-$, the finitary version of $\closure$, with its binary version. For any $p_1, p_2 \in {\mathcal A}(L)$, let $p_1 \star p_2 = \{ q \in {\mathcal A}(L) \colon q \leq p_1 \vee p_2 \}$. A set $P$ equipped with a map $\star \colon P \times P \to {\mathcal P}(P)$ can be made into a closure space in the obvious way: for $A \subseteq P$ we let $A^\star$ be the smallest superset of $A$ such that $p_1, p_2 \in A^\star$ implies $p_1 \star p_2 \subseteq A^\star$. A set closed w.r.t.\ $^\star$ is called {\it linear} and we denote the complete lattice of linear sets by ${\mathcal L}(P)$.

A lattice is called {\it modular} if, for each pair $a$ and $b$, we have
\begin{equation} \label{fml:modular}
(c \vee a) \wedge b = c \vee (a \wedge b) \quad \text{for any $c \leq b$.}
\end{equation}
It turns out that if an atomistic lattice $L$ is modular, then ${^-} = {^\star}$ and hence ${\mathcal S}({\mathcal A}(L)) = {\mathcal L}({\mathcal A}(L))$, that is, the subspaces of ${\mathcal A}(L)$ coincide with the linear subsets of ${\mathcal A}(L)$ \cite[(15.2)]{MaMa}. To characterise the operation $\star$, we are led to the following classical notion.

\begin{definition} \label{def:projective-space}
A {\it projective space} is a non-empty set $P$ together with a map $\star \colon P \times P \to {\mathcal P}(P)$ such that, for any $e, f, g, h \in P$, the following conditions hold.
\begin{itemize}[leftmargin=3em]

\item[(PS1)] $e, f \in e \star f$, and $e \star e = \{e\}$.

\item[(PS2)] If $g, h \in e \star f$ and $g \neq h$, then $g \star h = e \star f$.

\item[(PS3)] $e \star (f \star g) = (e \star f) \star g$.

\end{itemize}
\end{definition}

Here, we understand that $\star$ is pointwise extended to subsets, that is, for $e \in P$ and $A \subseteq P$, we put $e \star A = \bigcup \{ e \star f \colon f \in A \}$ and similarly for $A \star e$.

\begin{proposition} \label{prop:lattices-projective-spaces}
Let $L$ be a compactly atomistic, modular lattice. Then ${\mathcal A}(L)$, \linebreak equipped with the map $\star \colon {\mathcal A}(L) \times {\mathcal A}(L) \to {\mathcal P}({\mathcal A}(L)) \komma (p_1,p_2) \mapsto \{ q \colon q \leq p_1 \vee p_2 \}$, is a projective space. Moreover, $\omega \colon L \to {\mathcal L}({\mathcal A}(L)) \komma a \mapsto \{ p \in {\mathcal A}(L) \colon p \leq a \}$ is an isomorphism of lattices.

Conversely, let $(P, \star)$ be a projective space. Then ${\mathcal L}(P)$ is a compactly atomistic, modular lattice. The map $P \to {\mathcal A}({\mathcal L}(P)) \komma e \mapsto \{e\}$ is an isomorphism of projective spaces.
\end{proposition}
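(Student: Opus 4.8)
The strategy is to prove the two halves of the statement in sequence, each time reducing to the lemmas already available. For the first direction, let $L$ be a compactly atomistic, modular lattice. By Lemma~\ref{lem:LAL}(ii) and the modularity-based identity ${^-} = {^\star}$ cited from \cite[(15.2)]{MaMa}, we already know that ${\mathcal L}({\mathcal A}(L)) = {\mathcal S}({\mathcal A}(L)) = {\mathcal C}({\mathcal A}(L))$ and that $\omega$ is a lattice isomorphism onto this lattice; so the only genuinely new content is that $({\mathcal A}(L),\star)$ satisfies (PS1)--(PS3). (PS1) is immediate: $p_1,p_2 \leq p_1 \vee p_2$, and $p \vee p = p$ has exactly the atom $p$ below it, as $L$ is atomistic. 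For (PS2), suppose $g,h$ are distinct atoms with $g,h \leq e \vee f$; then $g \vee h \leq e \vee f$, and I would use modularity together with the fact that in a modular atomistic lattice $e \vee f$ covers $e$ (height considerations, or the exchange property which modular atomistic lattices enjoy) to conclude $g \vee h = e \vee f$, whence the sets $g \star h$ and $e \star f$ coincide. For (PS3) I would unwind both sides: $q \in e \star (f \star g)$ means $q \leq e \vee r$ for some atom $r \leq f \vee g$, and I want this to be equivalent to $q \leq s \vee g$ for some atom $s \leq e \vee f$; both are equivalent to $q \leq e \vee f \vee g$ once one knows, again from modularity plus compact atomisticity, that every atom below a join of finitely many atoms already lies below a join of \emph{two} of the atoms available after one ``merges'' — more precisely, one shows $\{q : q \leq e \vee f \vee g\}$ is exactly $(e \star f) \star g$, using that $e \vee f$ is itself a join of atoms all of which are $\leq e \vee f$, and symmetrically. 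This associativity is the step I expect to cost the most care.

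For the converse, let $(P,\star)$ be a projective space. First I would record that $^\star$ is an algebraic (finitary) closure operator: the closure of $A$ is the union of the closures of the finite subsets of $A$, because (PS3) lets one build up $A^\star$ by iterating the binary operation and each element produced uses only finitely many elements of $A$. Hence ${\mathcal L}(P)$, ordered by inclusion, is a complete lattice in which arbitrary meets are intersections and in which the compact elements are exactly the finitely generated linear sets; this gives compact atomisticity once one checks the atoms are the singletons $\{e\}$ (which follows from (PS1): $\{e\}$ is already $\star$-closed, and it is clearly minimal nonempty). The map $e \mapsto \{e\}$ is then a bijection from $P$ onto ${\mathcal A}({\mathcal L}(P))$, and it transports $\star$ correctly: $\{e\} \vee \{f\} = (\{e\} \cup \{f\})^\star = e \star f$ by (PS1)--(PS2), so it is an isomorphism of projective spaces.

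It remains to prove that ${\mathcal L}(P)$ is modular, and this is the real heart of the converse. I would verify the modular law \eqref{fml:modular} directly on linear sets: given linear sets $A, B, C$ with $C \subseteq B$, the inclusion $C \vee (A \wedge B) \subseteq (C \vee A) \wedge B$ is formal, so the work is the reverse inclusion. Take $p \in (C \vee A) \cap B$; then $p$ lies in the $\star$-closure of $C \cup A$, hence — using the algebraicity just established — there are finitely many elements of $C$ and of $A$ from whose join (in ${\mathcal L}(P)$) $p$ is obtained, and by iterating (PS2) and (PS3) one can reduce to the generic situation $p \in c \star a$ with $c \in C$, $a \in A$ (the degenerate cases $p \in C$ or $p \in A$ being trivial). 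Now $p, c \in B$, so by (PS2) the line $c \star p$ equals $c \star a$ and in particular contains $a$; thus $a \in c \star p \subseteq c \star B \subseteq B$ (as $B$ is linear and $c,p \in B$), so $a \in A \cap B = A \wedge B$, and therefore $p \in c \star a \subseteq C \vee (A \wedge B)$. The one subtlety I would be careful about is the reduction ``to the generic situation'': spelling out how an arbitrary element of the closure of $C \cup A$ can be rewritten, via the exchange-type consequence of (PS2)--(PS3), as lying on a single line with one endpoint in $C$ and one in $A$. This is exactly the classical argument that a projective space is modular, and it is the step where I would slow down and give details rather than wave at \cite{MaMa}.
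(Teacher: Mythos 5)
The paper's own ``proof'' is just the citation \cite[(16.5),(16.3)]{MaMa}, and your outline is a correct reconstruction of the standard argument behind those results: the forward half reduces, as you say, to Lemma~\ref{lem:LAL} together with the modularity facts already quoted in the surrounding text (in particular that a modular atomistic lattice has the exchange/covering property, so that $e \vee f$ has height $2$ and (PS2) follows by a height count), and the converse half is the classical proof that a projective space yields an algebraic, atomistic, modular closure lattice. The two places you flag as needing care --- the modularity/height argument showing both sides of (PS3) equal $\{q \colon q \leq e \vee f \vee g\}$, and the lemma that the join of two nonempty linear sets is the union of the lines joining a point of one to a point of the other --- are indeed where all the substance lies, and the sketches you give for both would go through.
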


\begin{proof}
See \cite[(16.5),(16.3)]{MaMa}.
\end{proof}

\subsubsection*{Reconstructing ortholattices from their associated orthosets}

Let us contrast these familiar facts with the case that we deal with an atomistic lattice that comes equipped with an orthogonality relation. An {\it orthocomplementation} on a bounded lattice $L$ is an order-reversing, involutive unary operation $\c$ that maps each element $a$ to a complement of $a$. Equipped with $\c$, $L$ is called an {\it ortholattice}. Furthermore, $L$ is in this case called an {\it orthomodular lattice}, or an {\it OML} for short, if $a \leq b$ implies that there is a $c \leq a\c$ such that $b = a \vee c$.

It turns out that to equip the atom space of a complete atomistic ortholattice with a structure determining the ortholattice is straightforward. Instead of projective spaces, we use the following notion, which is of an entirely different nature \cite{Dac,Wlc}.

\begin{definition}
An {\it orthoset} (or {\it orthogonality space}) is a non-empty set $X$ equipped with a symmetric, irreflexive binary relation $\perp$, called the {\it orthogonality relation}.
\end{definition}

For a subset $A$ of an orthoset $X$, we let $A\c = \{ q \in X \colon q \perp p \text{ for all } p \in A \}$ be the {\it orthocomplement} of $A$. The map sending any $A \subseteq X$ to $A\cc$ is a closure operation and we call sets that are closed w.r.t.\ $\cc$ {\it orthoclosed}. The complete lattice of orthoclosed subsets of $X$ is denoted ${\mathcal C}(X)$ and $\c$ makes ${\mathcal C}(X)$ into a complete ortholattice. Again, the infima in ${\mathcal C}(X)$ are given by the set-theoretic intersection, and for orthoclosed subsets $A_\iota$, $\iota \in I$, we have that $\bigvee_\iota A_\iota = (\bigcup_\iota A_\iota)\cc$.

Let $L$ be a complete atomistic ortholattice. Elements $a$ and $b$ of $L$ are called {\it orthogonal} if $a \leq b\c$; we write $a \perp b$ in this case. Obviously, ${\mathcal A}(L)$ equipped with the orthogonality relation $\perp$ inherited from $L$ is an orthoset. Moreover, we readily check that ${\closure} = {\cc}$. That is, the complete lattice of orthoclosed subsets of ${\mathcal A}(L)$ coincides with the complete lattice of supclosed subsets of ${\mathcal A}(L)$. Hence it makes sense to denote both lattices by ${\mathcal C}({\mathcal A}(L))$ and we have that $\omega \colon L \to {\mathcal C}({\mathcal A}(L)) \komma a \mapsto \{ p \in {\mathcal A}(L) \colon p \leq a \}$ is a lattice isomorphism. We conclude that, without any restrictions, the orthogonality relation on ${\mathcal A}(L)$ is suitable to describe $L$.

The exact correspondence between ortholattices and orthosets is as follows. We call an orthoset $(X,\perp)$ {\it point-closed} \cite{Rod} if every singleton is closed, that is, $\{ p \}\cc = \{ p \}$ for any $p \in X$. We note that this property is equivalent to {\it strong irredundancy} \cite{PaVe1}: for any $p, q \in X$, $\{ p \}\c \subseteq \{ q \}\c$ implies $p = q$.

\begin{proposition} \label{prop:ortholattices-OSs}
Let $L$ be a complete atomistic ortholattice. Then $({\mathcal A}(L), \perp)$ is a point-closed orthoset. Moreover, $\omega \colon L \to {\mathcal C}({\mathcal A}(L)) \komma a \mapsto \{ p \in {\mathcal A}(L) \colon p \leq a \}$ is an isomorphism of ortholattices.

Conversely, let $(X,\perp)$ be a point-closed orthoset. Then ${\mathcal C}(X)$ is a complete atomistic ortholattice. The map $X \to {\mathcal A}({\mathcal C}(X)) \komma e \mapsto \{e\}$ is an isomorphism of orthosets.
\end{proposition}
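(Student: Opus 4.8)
The plan is to verify the two directions of Proposition~\ref{prop:ortholattices-OSs} as essentially instances of the general correspondence results already established in this section, specialised by the orthocomplementation and the point-closedness condition.

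\textbf{First direction.} Let $L$ be a complete atomistic ortholattice. By the discussion preceding the statement, the supremum-closure operation $\closure$ on ${\mathcal A}(L)$ agrees with the double-orthocomplement operation $\cc$ inherited from the orthogonality relation $\perp$ on ${\mathcal A}(L)$; hence the lattice ${\mathcal C}({\mathcal A}(L))$ of orthoclosed subsets coincides with the lattice of supclosed subsets, and by Lemma~\ref{lem:LAL} (or rather the $\omega$-isomorphism between $L$ and ${\mathcal C}({\mathcal A}(L))$ noted there) the map $\omega$ is a lattice isomorphism. It remains only to check that $\omega$ respects the orthocomplementation and that $({\mathcal A}(L),\perp)$ is point-closed. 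For the first, I would show $\omega(a\c) = \omega(a)\c$: an atom $p$ lies in $\omega(a\c)$ iff $p \leq a\c$ iff $p \perp a$ iff $p \perp q$ for every atom $q \leq a$ (using that $L$ is atomistic, so $a = \bigvee\{q \in {\mathcal A}(L)\colon q\leq a\}$ and $p\c$ is closed under joins), which is exactly $p \in \omega(a)\c$. For point-closedness: given $p \in {\mathcal A}(L)$, the set $\{p\}\cc = \{p\}\closure = \{q \in {\mathcal A}(L)\colon q \leq \bigvee\{p\}\} = \{q\colon q\leq p\} = \{p\}$ since $p$ is an atom, so every singleton is orthoclosed.

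\textbf{Second direction.} Let $(X,\perp)$ be a point-closed orthoset. That ${\mathcal C}(X)$ is a complete ortholattice with orthocomplementation $\c$ and intersection as meet was recalled above; atomisticity is what needs argument. The candidate atoms are the singletons $\{e\}$, $e \in X$, which are orthoclosed by point-closedness; I would show every orthoclosed $A$ equals $\bigvee_{e \in A}\{e\} = (\bigcup_{e\in A}\{e\})\cc = A\cc = A$, so each orthoclosed set is a join of atoms, and conversely that these singletons are the only atoms (any orthoclosed set properly below $\{e\}$ would be $\{e\}\cap B$ for $B \subsetneq$ something, hence empty). Finally, to see that $e \mapsto \{e\}$ is an orthoset isomorphism, injectivity is immediate and the equivalence $e \perp f \iff \{e\} \perp \{f\}$ in ${\mathcal C}(X)$, i.e.\ $\{e\} \subseteq \{f\}\c$, is just the definition of $\c$ on singletons; surjectivity onto ${\mathcal A}({\mathcal C}(X))$ follows from the identification of the atoms just carried out.

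\textbf{Main obstacle.} None of the steps is deep; the work is bookkeeping, and the one place where care is genuinely needed is the equivalence $\closure = \cc$ on ${\mathcal A}(L)$ together with the compatibility $\omega(a\c)=\omega(a)\c$ in the first direction, which rests on using atomisticity to reduce orthogonality of $p$ against $a$ to orthogonality against the atoms below $a$ and on the fact that $p\c$ is supclosed. Since the generic lattice-to-atom-space correspondence has already been set up (Lemma~\ref{lem:LAL} and the paragraph following it), most of the proof can be phrased as ``apply the above with $\closure$ replaced by $\cc$'', and the proof should therefore be short.
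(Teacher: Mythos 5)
Your proposal is correct and follows essentially the same route as the paper: the lattice isomorphism is taken from the preceding discussion (where ${\closure}={\cc}$ is observed), the compatibility $\omega(a\c)=\omega(a)\c$ is verified by exactly the computation you describe (reducing $p\perp a$ to orthogonality against the atoms below $a$ via atomisticity), and the converse direction is treated as routine. The paper's proof is just a terser version of yours, leaving point-closedness and the second paragraph to the reader.
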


\begin{proof}
We already know that $\omega \colon L \to {\mathcal C}({\mathcal A}(L))$ is an isomorphism of lattices. To see that $\omega$ preserves the orthocomplementation, let $a \in L$. We have $\omega(a)\c = \{ p \in {\mathcal A}(L) \colon p \leq a \}\c = \{ q \in {\mathcal A}(L) \colon q \perp p \text{ for all } p \in {\mathcal A}(L) \text{ such that } p \leq a \} = \{ q \in {\mathcal A}(L) \colon q \perp a \} = \omega(a\c)$.

The assertions of the second paragraph are clear.
\end{proof}

\section{Atomistic lattices and linear spaces}
\label{sec:subspace-lattices}

We now turn to the second afore-mentioned issue: the correspondence between lattices and linear spaces. Again, we begin by mentioning the case of linear spaces (without predefined orthogonality relation) and we discuss then in some detail the case of inner-product spaces.

The main result of the present section is a representation theorem for Hermitian spaces by means of its associated ortholattice, differing from the common version in that it is based on properties of the finite part of the lattice only.

\subsubsection*{Reconstructing linear spaces from their subspace lattices}

We use the shortcut {\it sfield} to refer to a skew field (i.e., to a division ring). Let $H$ be a linear space over some sfield. We generally assume linear spaces not to have less than three dimensions but we do allow the case of infinite dimensions. We denote by ${\mathcal L}(H)$ the set of subspaces of $H$, partially ordered by set-theoretic inclusion. Then ${\mathcal L}(H)$ is a complete lattice.

Assuming a dimension $\geq 4$, we may characterise $H$ by means of ${\mathcal L}(H)$. The key properties of the lattice happen to occur in Proposition \ref{prop:lattices-projective-spaces}, which describes those atomistic lattices that are determined by the relation between triples of atoms according to which the first one is below the join of the other two. There is little to add: the reducibility of the lattice as well as lengths $\leq 3$ are to be excluded. For the difficult half of the subsequent fundamental theorem, see \cite[Ch.~VII]{Bae} or \cite[(33.6)]{MaMa}.

We call a lattice $L$ {\it irreducible} if $L$ is not isomorphic to the direct product of two lattices with at least two elements.

\begin{theorem} \label{thm:subspace-lattice-linear-spaces}
Let $H$ be a linear space. Then ${\mathcal L}(H)$ is an irreducible, compactly atomistic, modular lattice.

Conversely, let $L$ be an irreducible, compactly atomistic, modular lattice of length $\geq 4$. Then there is a linear space $H$ such that $L$ is isomorphic to ${\mathcal L}(H)$.
\end{theorem}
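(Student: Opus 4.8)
The first assertion is essentially routine and amounts to assembling facts that the excerpt has already put in place. That $\mathcal L(H)$ is a complete lattice is stated; that it is atomistic follows because every subspace is the span of its one-dimensional subspaces, and compact atomisticity is just the statement that a vector in a span of a family of lines lies in the span of finitely many of them, which is immediate from the definition of linear span. Modularity of $\mathcal L(H)$ is the classical modular law for subspaces: if $C \leq B$ are subspaces then $(C + A) \cap B = C + (A \cap B)$, proved by a one-line element chase. Irreducibility: a direct product decomposition of a lattice corresponds to a pair of complementary \emph{central} elements, and in $\mathcal L(H)$ the only complemented-in-the-centre elements are $0$ and $H$ because a proper nontrivial subspace always fails to have a complement that it commutes with in the required lattice sense — concretely, if $H = H_1 \oplus H_2$ as a lattice product then every line of $H$ would lie in $H_1$ or in $H_2$, which is false as soon as $\dim H \geq 2$. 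So the forward direction is a short verification.

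The substance is the converse, and here the plan is to reduce it to the machinery already recorded in Section \ref{sec:atom-spaces} together with the cited coordinatisation theorem. Given an irreducible, compactly atomistic, modular lattice $L$ of length $\geq 4$, Proposition \ref{prop:lattices-projective-spaces} applies directly: $\mathcal A(L)$ equipped with the ternary operation $\star$, $(p_1,p_2) \mapsto \{q : q \leq p_1 \vee p_2\}$, is a projective space, and $\omega \colon L \to \mathcal L(\mathcal A(L))$ is a lattice isomorphism. So it suffices to show that this projective space $(P,\star) = (\mathcal A(L),\star)$ is the projective space $P(H)$ of lines of some linear space $H$ over an sfield, with $\mathcal L(P) \cong \mathcal L(H)$. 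At this point one invokes the fundamental theorem of projective geometry — precisely the reference \cite[Ch.~VII]{Bae} or \cite[(33.6)]{MaMa} that the paper points to — which says that a projective space of (projective) dimension $\geq 3$, equivalently of length $\geq 4$ as a lattice, is Desarguesian and hence arises as $P(H)$ for a left vector space $H$ over a division ring, unique up to isomorphism; the irreducibility of $L$ is what guarantees the projective space is connected (not a disjoint-union / direct-product situation) so that a single $H$ suffices, and length $\geq 4$ is exactly the dimension bound under which Desargues' theorem becomes a consequence of the incidence axioms rather than an extra hypothesis. Composing, $L \cong \mathcal L(\mathcal A(L)) = \mathcal L(P(H)) \cong \mathcal L(H)$, which is the claim.

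The one genuine obstacle — and the reason this is quoted rather than proved — is the coordinatisation step itself: passing from the synthetic incidence data $(P,\star)$ to an actual sfield and vector space. The construction of the sfield from a projective line (choosing three points as $0,1,\infty$ and reading off addition and multiplication from suitable perspectivities) and the verification that Desargues' configuration, available because the dimension is $\geq 3$, forces associativity and distributivity, is a substantial classical development that I would not reproduce; I would simply cite it. Everything else — the translation between "length $\geq 4$'' on the lattice side and "projective dimension $\geq 3$'' on the geometry side, the identification of linear sets of $P$ with subspaces of $H$, and the bookkeeping showing the various isomorphisms compose correctly — is bookkeeping that follows from Proposition \ref{prop:lattices-projective-spaces} and the definitions. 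A minor point to handle is the lower-dimensional degeneracies the paper explicitly excludes (lengths $\leq 3$, i.e.\ projective dimension $\leq 2$), where non-Desarguesian planes and lines without enough structure genuinely occur; the hypothesis length $\geq 4$ is there precisely to avoid them.
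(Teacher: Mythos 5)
Your plan is correct and matches what the paper intends: the forward direction is the routine verification you give, and for the converse the paper itself defers to exactly the route you describe — pass to the projective space of atoms via Proposition \ref{prop:lattices-projective-spaces} and invoke the coordinatisation theorem of \cite[Ch.~VII]{Bae}, \cite[(33.6)]{MaMa}, with irreducibility supplying connectedness and length $\geq 4$ supplying Desargues. Nothing further is needed.
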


\subsubsection*{Reconstructing Hermitian spaces from their subspace ortholattices}

We shall next see how the picture changes for linear spaces that are endowed with an orthogonality relation.

Let $H$ be a linear space. We denote by $\lin{u_1, \ldots, u_k}$ the linear span of non-zero vectors $u_1, \ldots, u_k \in H$. For any subspace $E$ of $H$, we write $E\withoutzero = E \setminus \{0\}$ and we define $P(E) = \{ \lin u \colon u \in E\withoutzero \}$, the set of one-dimensional subspaces of $E$.

Let $\perp$ be a binary relation on $P(H)$. We call $(P(H),\perp)$ an {\it orthogeometry} if \linebreak $(P(H), \perp)$ is an orthoset with the following properties: (OG1)~$\lin w \perp \lin u, \lin v$ implies $\lin w \perp \lin x$ for any $x \in \lin{u,v}\withoutzero$, and (OG2)~for any distinct $\lin u, \lin v$ there is a $w \in \lin{u,v}\withoutzero$ such that $\lin w \perp \lin u$.

We may regard $\perp$ in this case alternatively as a relation on $H$ itself: for $u, v \in H$, let $u \perp v$ if one of $u$ or $v$ is $0$ or otherwise $\lin u \perp \lin v$. Note moreover that, by (OG1), any orthoclosed subset of $P(H)$ is of the form $P(E)$ for some subspace $E$ of $H$. We call $E$ in this case orthoclosed as well and we denote the ortholattice of all orthoclosed subspaces of $H$ by ${\mathcal C}(H)$.

The notion of an orthogeometry was introduced, with a regularity assumption instead of irreflexivity, by Faure and Fr\" olicher \cite[Chapter~14.1]{FaFr2}. In our, more special case, also the term ``orthogonality'' has been used \cite{FaFr1}.

\begin{lemma} \label{lem:orthogonality-on-linear-space}
Let $H$ be a linear space over the sfield $K$ and let $(P(H), \perp)$ be an orthogeometry.
\begin{itemize}

\item[\rm (i)] Let $U$ be a finite-dimensional subspace of $H$. Then any set of mutually orthogonal non-zero vectors in $U$ can be extended to an orthogonal basis of $U$.

\item[\rm (ii)] For any $\lin u \in P(H)$ and any two-dimensional subspace $U$, there is a $v \in U\withoutzero$ such that $\lin v \perp \lin u$.

\item[\rm (iii)] $(P(H),\perp)$ is point-closed.

\item[\rm (iv)] For any $\lin u \in P(H)$, $\{ v \in H \colon v \perp u \}$ is a hyperplane of $H$.

\end{itemize}
\end{lemma}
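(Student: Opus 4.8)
The plan is to treat the four parts in an order that lets the later ones lean on the earlier ones, extracting everything from the two orthogeometry axioms (OG1) and (OG2). First I would prove (ii), then (i) by induction, then (iv), and finally (iii).

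For (ii): given $\lin u \in P(H)$ and a two-dimensional subspace $U$, I would distinguish two cases. If $\lin u$ is not contained in $U$ — more precisely, if there is a vector in $U$ not orthogonal to $u$ — then pick any $\lin v_0 \in P(U)$ with $\lin v_0 \notperp \lin u$ (if every line of $U$ were orthogonal to $\lin u$, then by (OG1) applied inside $U$ the whole of $U$ would be orthogonal, and I would handle that degenerate case separately, or rule it out). Now I want to rotate $\lin v_0$ inside the pencil of lines of $U$ until it becomes orthogonal to $\lin u$: more directly, apply (OG2) to the pair $\lin u, \lin v_0$ if they are distinct to get $w \in \lin{u, v_0}\withoutzero$ with $\lin w \perp \lin u$ — but this $w$ lies in $\lin{u,v_0}$, not necessarily in $U$. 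So the cleaner route is: inside the two-dimensional space $U$, pick two distinct lines $\lin a, \lin b \in P(U)$; by (OG2) there is $w \in \lin{a,b}\withoutzero = U\withoutzero$ with $\lin w \perp \lin a$. If it happens that also $\lin w \perp \lin u$ we are done; otherwise one plays the lines of $U$ against $\lin u$ using (OG2) once more on the pair $\lin u, \lin w$, and then projects the resulting orthogonal line back into $U$ using (OG1) to spread orthogonality over a two-dimensional span. I expect the honest argument to require picking a line $\lin v$ in $U$ and a line not in $U$, forming their span (a possibly three-dimensional configuration), applying (OG2) there, and then using (OG1) to pull the conclusion back to a line of $U$; this bookkeeping with spans is the place where I expect to have to be careful.

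For (i): let $U$ be finite-dimensional and let $v_1, \ldots, v_k \in U$ be mutually orthogonal non-zero vectors. They are linearly independent (orthogonality is irreflexive, so no $v_i$ can lie in the span of the others without being orthogonal to itself — here I use (OG1) to see that $\lin{v_1, \ldots, \widehat{v_i}, \ldots, v_k}$ is orthogonal to $\lin v_i$). If $k = \dim U$ we are done. Otherwise, let $E = \lin{v_1, \ldots, v_k}$, a proper subspace, and I want a non-zero vector in $U$ orthogonal to all of $v_1, \ldots, v_k$. I would induct on $k$: the set $\{ v \in U : v \perp v_1 \}$ is, by (OG1) together with (ii) applied to suitable two-dimensional subspaces, a hyperplane of $U$ (or at least a subspace of $U$ properly containing a complement of $\lin{v_1}$ and of dimension $\dim U - 1$); intersecting successively with the analogous hyperplanes for $v_2, \ldots, v_k$ keeps the dimension dropping by at most one each time, and as long as $k < \dim U$ the intersection is non-trivial, yielding $v_{k+1}$. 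Repeat until a basis is reached. The main technical point feeding this is that $v \mapsto$ "orthogonal to a fixed $\lin u$" cuts out a hyperplane; that is exactly part (iv), which I would therefore prove before or alongside (i).

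For (iv): fix $\lin u \in P(H)$ and set $N = \{ v \in H : v \perp u \}$. Axiom (OG1) says precisely that $N$ is closed under taking linear combinations of any two of its members together with $0$, i.e.\ $N$ is a subspace of $H$ (one checks $v_1, v_2 \in N \Rightarrow \lin u \perp \lin{v_1, v_2} \Rightarrow v_1 + \lambda v_2 \in N$). Since $u \notin N$ by irreflexivity, $N$ is proper. To see $N$ is a hyperplane, i.e.\ $\dim H / N \leq 1$, take any $w \notin N$; I claim $H = N + \lin w$. Given arbitrary $x \in H$, consider the two-dimensional (or smaller) subspace $\lin{w, x}$; by (ii) there is a non-zero $v \in \lin{w,x}$ with $\lin v \perp \lin u$, so $v \in N$. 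If $v \in \lin w$ then $w \in N$, contradiction, so $v$ and $w$ are independent and span $\lin{w,x}$, whence $x \in \lin{w, v} \subseteq \lin w + N$. Thus $N$ has codimension exactly one. Finally (iii), point-closedness: by Proposition \ref{prop:ortholattices-OSs} (or directly) it suffices to show $\{p\}\cc = \{p\}$ for $p = \lin u$; but $\{p\}\c = \{ \lin v : v \perp u \} = P(N)$ with $N$ the hyperplane from (iv), so $\{p\}\cc = P(N)\c = \{ \lin w : w \perp v \text{ for all } v \in N \}$, and since $N$ is a hyperplane not containing $u$, a vector orthogonal to all of $N$ must be a scalar multiple of $u$ — here one uses that $N^{\c\c} = H$ is impossible and that $N$ together with one more independent line already exhausts $H$, so anything orthogonal to $N$ and independent of $u$ would have to be orthogonal to all of $H$ including itself. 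Hence $\{p\}\cc = \{\lin u\} = \{p\}$. I expect part (ii) to be the genuine obstacle — everything else is a clean consequence of "orthogonal complement of a line is a hyperplane", and that fact in turn rests on (ii) — so the work should be concentrated there, in carefully managing the spans when (OG2) is invoked.
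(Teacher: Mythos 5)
Your proof of part (ii) --- which you correctly identify as the load-bearing piece, since in your architecture (iv) rests on (ii), (i) rests on (iv), and (iii) rests on (iv) --- is not actually a proof. In the essential case $u \notin U$ you record several false starts ((OG2) applied to $\lin u, \lin{v_0}$ produces a $w$ in the wrong plane; (OG2) applied inside $U$ produces a $w$ orthogonal to the wrong line) and then defer to ``the honest argument'', described only as bookkeeping with spans that you ``expect to have to be careful'' about. That bookkeeping is precisely the content of the statement, and it is missing. Two ways to close the gap: (a) follow the paper and prove (i) \emph{first}, by a Gram--Schmidt-style induction using only (OG1)/(OG2) (given mutually orthogonal $e_1,\dots,e_k$ and $f$ outside their span, (OG2) yields $f_1\perp e_1$ with $\lin{e_1,f}=\lin{e_1,f_1}$, then $f_2\perp e_2$ in $\lin{e_2,f_1}\withoutzero$, which by (OG1) is also $\perp e_1$, and so on); then (ii) follows by extending $\{u\}$ to an orthogonal basis $\{u,e_1,e_2\}$ of the $3$-dimensional space $U+\lin u$ and taking any non-zero vector of $\lin{e_1,e_2}\cap U$. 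Or (b) prove (ii) directly: write $U=\lin{a,b}$, use (OG2) to find $a'\perp u$ with $\lin{u,a}=\lin{u,a'}$ and $b'\perp u$ with $\lin{u,b}=\lin{u,b'}$, write $a=\alpha u+\beta a'$ and $b=\gamma u+\delta b'$, and check that a suitable non-trivial combination of $a$ and $b$ (for instance $a-\alpha\gamma^{-1}b$ when $\gamma\neq 0$, and $b$ itself otherwise) has vanishing $u$-component, hence is a non-zero vector of $U$ lying in $\lin{a',b'}$ and therefore orthogonal to $u$ by (OG1). Either way, something concrete must be written down; as it stands (ii), and with it the entire chain, is unproven.

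Granting (ii), the rest of your plan is sound and genuinely different from the paper's. Your proof of (iv) (every $2$-dimensional $\lin{w,x}$ meets $N=\{v\colon v\perp u\}$ non-trivially, hence $H=N+\lin w$ for any $w\notin N$) is correct, and deducing (i) by intersecting $U$ with the hyperplanes $\{v\colon v\perp v_i\}$ is a clean alternative to the paper's induction; note that the resulting vector automatically lies outside $\lin{v_1,\dots,v_k}$ by (OG1) and irreflexivity. For (iii), however, your closing justification does not follow as written: a $w$ orthogonal to all of $N$ but independent of $u$ need not be ``orthogonal to all of $H$'', since nothing forces $w\perp u$. The correct finish (essentially the paper's): apply (OG2) to $\lin u,\lin w$ to get $v\in\lin{u,w}\withoutzero$ with $v\perp u$; then $v\in N$, so $v\perp w$ as well, and (OG1) forces $v\perp v$, a contradiction.
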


\begin{proof}
Ad (i): Note first that, by (OG1) and the irreflexivity of $\perp$, any set of mutually orthogonal vectors is linearly independent.

Assume that, for some $k \geq 1$, there are pairwise orthogonal non-zero vectors $e_1, \ldots, \linebreak e_k \in U$ and that there is an $f \in U$ not in $\lin{e_1, \ldots, e_k}$. By (OG2), there is an $f_1 \perp e_1$ in $\lin{e_1, f}\withoutzero$; we then have $\lin{e_1,f} = \lin{e_1,f_1}$. Similarly, there is an $f_2 \perp e_2$ in $\lin{e_2,f_1}\withoutzero$; we then have $f_2 \perp e_1,e_2$ and $\lin{e_1,e_2,f} = \lin{e_1,e_2,f_1} = \lin{e_1,e_2,f_2}$. Continuing in a same manner, we conclude that there is an $f_k$ such that $\lin{e_1,\ldots,e_k,f} = \lin{e_1,\ldots,e_k,f_k}$ and $f_k \perp e_1, \ldots, e_k$. As $U$ is of finite dimension, the assertion follows.

Ad (ii): If $u \in U$, the claim holds by (OG2). Assume that $u \notin U$. Then $U + \lin u$ is $3$-dimensional. By part (i), $U + \lin u$ possesses an orthogonal basis $\{ u, e_1, e_2 \}$. In view of (OG1), any $v \in \lin{e_1, e_2}\withoutzero \cap U\withoutzero$ fulfills the requirements.

Ad (iii): Let $u \in H\withoutzero$. Clearly, $\lin u \in \{\lin u\}\cc$. Assume that $v$ is linearly independent from $u$ and $\lin v \in \{\lin u\}\cc$. By (OG2), there is a non-zero vector $w \perp u$ in $\lin{u,v}$. But then $\lin w \in \{\lin u\}\c$ and, by (OG1), $\lin w \in \{\lin u\}\cc$, in contradiction to irreflexivitiy.

Ad (iv): By (OG1) and irreflexivitiy, $\{ v \in H \colon v \perp u \}$ is a proper subspace of $H$. Moreover, by part (ii), $\{\lin u\}\c \cap P(U) \neq \emptyset$ for any two-dimensional subspace $U$.
\end{proof}

Orthogeometries give rise to inner products. The key result originates from \cite{BiNe} and deals with the finite-dimensional case. For the generalisation to infinite dimensions, which we state below, see \cite{FaFr2}.

A {\it $\star$-sfield} is an sfield $K$ equipped with an involutorial antiautomorphism $^\star \colon K \to K$. Let $H$ be a (left) linear space over the $\star$-sfield $K$. By a {\it Hermitian form} on $H$, we mean a map $\herm{\cdot}{\cdot} \colon H \times H \to K$ such that, for any $u, v, w \in H$ and $\alpha, \beta \in K$, we have
\begin{align*}
& \herm{\alpha u + \beta v}{w} \;=\; \alpha \, \herm{u}{w} + \beta \, \herm{v}{w}, \\
& \herm{w}{\alpha u + \beta v} \;=\;
                               \herm{w}{u} \, \alpha^\star + \herm{w}{v} \, \beta^\star, \\
& \herm{u}{v} \;=\; \herm{v}{u}^\star, \\
& \herm{u}{u} = 0 \text{ implies } u = 0.
\end{align*}
Endowed with a Hermitian form, we call $H$ a {\it Hermitian space}. Note that, by the last condition which is not commonly assumed, we require a Hermitian space always to be anisotropic. If the $\star$-sfield $K$ is commutative and the involution $\star$ is the identity, we call $H$ a {\it quadratic space}. In this case, we also assume $K$ to be of characteristic~$\neq 2$. 

Let $H$ be a Hermitian space. For $\lin u, \lin v \in P(H)$, we define $\lin u \perp \lin v$ if $\herm u v = 0$. It is not difficult to check that $(P(H),\perp)$ is an orthogeometry, and we call $\perp$ the orthogonality relation induced by $\herm{\cdot}{\cdot}$.

\begin{theorem} \label{thm:BiNe}
Let $H$ be a linear space over an sfield $K$ and let $H$ be of dimension $\geq 3$. Let $\perp$ be such that $(P(H),\perp)$ is an orthogeometry. Then there is an involutorial antiautomorphism $^\star$ on $K$ and a Hermitian form $\herm{\cdot}{\cdot}$ based on $^\star$ such that $\herm{\cdot}{\cdot}$ induces $\perp$.
\end{theorem}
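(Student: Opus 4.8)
The plan is to reconstruct the Hermitian form directly from the orthogonality relation by first passing through the lattice ${\mathcal C}(H)$ of orthoclosed subspaces, but in practice the cleanest route is the classical coordinatisation argument adapted from \cite{BiNe}. First I would fix an orthogonal basis. By Lemma~\ref{lem:orthogonality-on-linear-space}(i), since $H$ has dimension $\geq 3$, we may pick mutually orthogonal non-zero vectors; in the infinite-dimensional case we invoke Zorn's lemma on the poset of orthogonal sets, using part~(i) to guarantee that a maximal orthogonal set is in fact a basis (if some vector lay outside its span, part~(i) would let us extend). Call this orthogonal basis $(e_i)_{i \in I}$.

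Next I would build the form coordinatewise. The idea is to define $\herm{e_i}{e_i} =: \lambda_i$ for suitable scalars $\lambda_i \in K\withoutzero$ — these are free parameters, reflecting the well-known fact that the form is only determined up to scaling each basis vector — and to set $\herm{e_i}{e_j} = 0$ for $i \neq j$, then extend by (bi)linearity. The real content is to produce the antiautomorphism $^\star$ so that the extension is well-defined and compatible with $\perp$. For this I would use a two- or three-dimensional subspace and the nondegeneracy built into (OG2): given $i \neq j$, the hyperplane $\{v : v \perp e_i\}$ (Lemma~\ref{lem:orthogonality-on-linear-space}(iv)) meets the line through any $e_i + \alpha e_j$ in a unique point, and tracking how the orthogonality condition $e_i + \alpha e_j \perp e_i + \beta e_j$ forces a relation between $\alpha$ and $\beta$ gives the formula $\beta = -\lambda_i^{-1} \alpha^\star \lambda_i$ (up to conventions); comparing this across different pairs $i, j, k$ — this is where the length $\geq 3$ is essential — forces $^\star$ to be a single antiautomorphism independent of the chosen plane, and involutoriality follows from the symmetry of $\perp$. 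Concretely one defines $\alpha^\star$ via this orthogonality relation in a fixed plane $\lin{e_i, e_j}$ and checks, using a third coordinate, that the definition is consistent.

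Having $^\star$ and the $\lambda_i$, I would define $\herm{\sum \alpha_i e_i}{\sum \beta_j e_j} = \sum_i \alpha_i \lambda_i \beta_i^\star$ (finite sums, as vectors have finite support), verify the Hermitian form axioms — sesquilinearity is immediate from the definition, $\herm u v = \herm v u^\star$ reduces to $\lambda_i = \lambda_i^\star$ which one arranges by the normalisation, and anisotropy $\herm u u = 0 \Rightarrow u = 0$ needs an argument that $\sum \alpha_i \lambda_i \alpha_i^\star = 0$ forces all $\alpha_i = 0$. The anisotropy is the delicate point: it does not hold for arbitrary choices of $\lambda_i$, so one must choose them compatibly with the given $\perp$, exploiting that the orthogonality relation on the finite-dimensional subspaces is already that of an anisotropic form (which the orthogeometry axioms encode — in particular a line is never self-orthogonal). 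Finally I would check that the induced orthogonality $\herm u v = 0$ agrees with the given $\perp$: it agrees on the basis and on pairs within a common plane by construction, and (OG1) together with Lemma~\ref{lem:orthogonality-on-linear-space}(ii)–(iv) propagates the agreement to all of $P(H)$, since both relations have the same hyperplanes as orthocomplements of points and a relation of this type is determined by the point-orthocomplements.

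The main obstacle I anticipate is the coherence of the antiautomorphism and the scalars across the whole (possibly infinite) index set: one must show that the local datum extracted from each plane $\lin{e_i, e_j}$ glues into a global $^\star$ and a global family $(\lambda_i)$ satisfying $\lambda_i^\star = \lambda_i$ simultaneously, and that the resulting global form is anisotropic. In the finite-dimensional case this is the original Birkhoff–von~Neumann coordinatisation; for the infinite-dimensional case the extension is essentially formal once one observes that every finite subconfiguration lives in a finite-dimensional orthoclosed subspace (Lemma~\ref{lem:orthogonality-on-linear-space}), so the form is determined by its restrictions to finite-dimensional subspaces and these cohere — but writing this gluing carefully, and in particular handling the freedom in the $\lambda_i$ so that anisotropy survives, is the real work. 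I would cite \cite{FaFr2} for the infinite-dimensional bookkeeping and restrict the novel verification to the two-dimensional orthogonality calculation that pins down $^\star$.
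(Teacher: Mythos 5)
The paper offers no proof of this theorem: it is imported from the literature (Birkhoff--von Neumann \cite{BiNe} for finite dimensions, Faure--Fr\"olicher \cite{FaFr2} in general), so your sketch is measured against the classical coordinatisation argument you are reconstructing. It follows that argument in spirit, but one step genuinely fails and the two hardest steps are deferred rather than carried out.

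The failing step is the construction of a global orthogonal basis. Lemma~\ref{lem:orthogonality-on-linear-space}(i) extends an orthogonal set only inside a \emph{finite-dimensional} subspace $U$: the induction orthogonalises a new vector against finitely many $e_i$ at a time. If $M$ is an infinite maximal orthogonal set and $f \notin \mathrm{span}(M)$, there is no finite-dimensional $U$ containing $M \cup \{f\}$ to which part~(i) applies, and $M^\perp$ may be $\{0\}$ without $M$ spanning $H$. Concretely, let $H$ be the $\Reals$-span of $\{e_n\}_{n\geq 1} \cup \{w\}$ inside $\ell^2$, where $w = \sum_n 2^{-n}e_n$: this is an orthogeometry, $\{e_n\}$ is maximal orthogonal, yet it does not span $H$, so the coordinatewise formula $\herm{\sum_i\alpha_i e_i}{\sum_j\beta_j e_j} = \sum_i \alpha_i\lambda_i\beta_i^\star$ is not even defined on all of $H$. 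The standard proof avoids bases entirely: point-closedness and Lemma~\ref{lem:orthogonality-on-linear-space}(iv) make $\lin u \mapsto \{u\}^\perp$ an injective map from points to hyperplanes which, by (OG1), reverses incidence --- a polarity of the projective space; the Fundamental Theorem of Projective Geometry (this is where dimension $\geq 3$ enters) represents it by a semilinear map $H \to H^*$, i.e.\ a reflexive sesquilinear form, and symmetry plus irreflexivity of $\perp$ force the form to be, after rescaling, Hermitian and anisotropic. Separately, your plane-by-plane extraction of $^\star$ is circular as written (the relation between $\alpha$ and $\beta$ involves the $\lambda_i$, which you normalise using $^\star$), and the final ``propagation'' of the agreement between $\perp$ and the form-induced relation from coordinate planes to all of $P(H)$ is asserted, not proved; both are precisely what the polarity argument handles. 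If your intention is to delegate these to \cite{FaFr2}, that is legitimate, but then the basis construction --- the one part you argue in detail --- should be dropped, since it is the part that is wrong.
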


We now turn to the characterisation of Hermitian spaces by the ortholattice of their closed subspaces.

A lattice $L$ with $0$ is said to fulfil the {\it covering property} if, for any $a \in L$ and any atom $p \in L$ such that $p \nleq a$, $a$ is covered by $a \vee p$. If this property holds only when $a$ is finite, we say that $L$ fulfils the {\it finite covering property}. An atomistic lattice with the covering property is called {\it AC}.

Moreover, the irreducibility of ortholattices is understood in the expected way: an ortholattice is {\it irreducible} if it is not isomorphic to the direct product of two ortholattices with distinct bottom and top elements. We note that the irreducibility of an ortholattice is equivalent to the irreducibility of its lattice reduct.

From Theorems \ref{thm:subspace-lattice-linear-spaces} and \ref{thm:BiNe}, a lattice-theoretic characterisation of Hermitian spaces can be proved \cite[(34.5)]{MaMa}.

\begin{theorem} \label{thm:orthomodular-space}
Let $H$ be a Hermitian space. Then ${\mathcal C}(H)$ is a complete, irreducible AC ortholattice.

Conversely, let $L$ be a complete, irreducible AC ortholattice of length $\geq 4$. Then there is a Hermitian space $H$ such that $L$ is isomorphic to ${\mathcal C}(H)$.
\end{theorem}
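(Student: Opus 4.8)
The plan is to deduce Theorem~\ref{thm:orthomodular-space} from Theorems~\ref{thm:subspace-lattice-linear-spaces} and~\ref{thm:BiNe} together with Lemma~\ref{lem:orthogonality-on-linear-space}, in the spirit of \cite[(34.5)]{MaMa}. For the forward direction, I would start from a Hermitian space $H$ and its orthogeometry $(P(H),\perp)$; by Lemma~\ref{lem:orthogonality-on-linear-space}(iii), this orthoset is point-closed, so by Proposition~\ref{prop:ortholattices-OSs} the lattice ${\mathcal C}(P(H))$ of orthoclosed subsets is a complete atomistic ortholattice, and by (OG1) its members are exactly the sets $P(E)$ for orthoclosed subspaces $E$, so ${\mathcal C}(P(H)) \cong {\mathcal C}(H)$. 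Irreducibility follows since ${\mathcal C}(H)$ has the same lattice reduct questions as ${\mathcal L}(H)$ at the level of finite elements, and ${\mathcal L}(H)$ is irreducible by Theorem~\ref{thm:subspace-lattice-linear-spaces} (anisotropy forbids a non-trivial direct product decomposition because any such decomposition would produce a vector orthogonal to itself). The covering property is the verification that for a closed subspace $E$ and a one-dimensional $\lin u \not\leq E$, the subspace $E \vee \lin u$ covers $E$ in ${\mathcal C}(H)$; here one uses Lemma~\ref{lem:orthogonality-on-linear-space}(i) to pick an orthogonal basis of the relevant finite piece and reduce to the finite-dimensional modular situation, where the covering property is the usual dimension formula.

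For the converse, let $L$ be a complete, irreducible AC ortholattice of length $\geq 4$. The covering property implies $L$ is compactly atomistic and, crucially, that the lattice reduct is modular on finite elements; in fact an AC lattice is semimodular, and a semimodular lattice admitting an orthocomplementation (hence an anti-automorphism) is modular — this is the classical argument that AC ortholattices are modular. So the lattice reduct of $L$ satisfies the hypotheses of Theorem~\ref{thm:subspace-lattice-linear-spaces}, giving a linear space $H$ with $L \cong {\mathcal L}(H)$ as lattices, with $\dim H \geq 4$. Transporting the orthocomplementation $\c$ of $L$ along this isomorphism yields an orthocomplementation on ${\mathcal L}(H)$, and restricting to atoms gives a binary relation $\perp$ on $P(H)$. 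I would then check that $(P(H),\perp)$ satisfies (OG1) and (OG2): (OG1) is essentially that $\c$ is order-reversing (if $\lin w \perp \lin u,\lin v$ then $\lin w \leq (\lin u \vee \lin v)\c$, which contains every $\lin x \leq \lin u \vee \lin v$), and irreflexivity of $\perp$ comes from $p \wedge p\c = 0$ together with $p$ being an atom; (OG2) says $(\lin u)\c \wedge (\lin u \vee \lin v) \neq 0$ for distinct $\lin u, \lin v$, which follows because $\c$ is a complementation so $(\lin u)\c \vee \lin u = 1$ and then modularity forces $(\lin u)\c \wedge (\lin u \vee \lin v)$ to be a non-zero (indeed atomic) element of the interval. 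Now Theorem~\ref{thm:BiNe} supplies a $\star$-sfield structure on the scalar sfield $K$ and a Hermitian form inducing $\perp$; anisotropy of the form is exactly the irreflexivity of $\perp$ already established. Finally I would argue ${\mathcal C}(H) = {\mathcal L}(H)$ here, i.e.\ that \emph{every} subspace is orthoclosed: this uses that $\c$ transported from $L$ is a genuine orthocomplementation on all of ${\mathcal L}(H)$ (not merely on a sublattice), so $E\cc = E$ for each subspace $E$, giving $L \cong {\mathcal L}(H) = {\mathcal C}(H)$ as ortholattices.

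The main obstacle I anticipate is the converse direction's passage from ``AC ortholattice'' to ``the orthogonality relation on atoms is an orthogeometry with the scalar form anisotropic,'' in particular ensuring that the orthocomplementation obtained by transport along the purely lattice-theoretic isomorphism $L \cong {\mathcal L}(H)$ coincides with the orthogonality induced by the Hermitian form produced by Theorem~\ref{thm:BiNe} — these are two a priori different structures on $P(H)$ and one must see they agree, which amounts to checking that the Hermitian form's induced orthogonality is the unique orthogeometry extending the atom-level relation, and that the form's anisotropy is forced. A secondary delicate point is proving modularity of $L$ from the AC and ortho hypotheses without circularity; I would either cite the standard result that AC ortholattices are modular or reprove it via: AC $\Rightarrow$ (upper) semimodular, semimodular $+$ dually semimodular (the latter via the orthocomplement) $\Rightarrow$ modular on finite elements, which suffices since length $\geq 4$ and compact atomisticity let us run Theorem~\ref{thm:subspace-lattice-linear-spaces}. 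Everything else — point-closedness, the identification ${\mathcal C}(P(H)) \cong {\mathcal C}(H)$, irreducibility matching between $L$ and ${\mathcal L}(H)$ — is routine bookkeeping with the results already assembled in Sections~\ref{sec:atom-spaces} and~\ref{sec:subspace-lattices}.
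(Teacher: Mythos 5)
The forward direction of your sketch is serviceable in outline, but the converse contains a fatal error. You claim that the covering property forces $L$ to be compactly atomistic, and that an AC ortholattice is modular (``semimodular plus an orthocomplementation implies modular''). Both claims are false, and the standard counterexample is exactly the object this theorem is designed to capture: for an infinite-dimensional real or complex Hilbert space $H$, the lattice ${\mathcal C}(H)$ of closed subspaces is a complete, irreducible AC ortholattice of length $\geq 4$, yet it is neither modular (this is classical) nor compactly atomistic (take $r = \lin{\sum_n e_n/n}$; it lies below the closed span of the $\lin{e_n}$ but below no finite sub-join). Note that the paper's Lemma~\ref{lem:AC-ortholattice-of-finite-height} asserts modularity only for AC ortholattices of \emph{finite} length, and the ``semimodular $+$ dually semimodular $\Rightarrow$ modular'' implication you invoke is likewise a finite-length (or at least chain-condition) fact. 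Consequently you cannot apply Theorem~\ref{thm:subspace-lattice-linear-spaces} to $L$ itself, the conclusion $L \cong {\mathcal L}(H)$ is unattainable (every ${\mathcal L}(H)$ is modular, so it cannot be isomorphic to a non-modular $L$), and your final step ``every subspace is orthoclosed, so ${\mathcal C}(H) = {\mathcal L}(H)$'' is false in the infinite-dimensional case.

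The repair is the route taken in the paper's Theorem~\ref{thm:representation-by-Hermitian-spaces-adapted} (and in the cited source): do not coordinatise $L$ directly, but rather the lattice ${\mathcal S}({\mathcal A}(L))$ of subspaces of the atom space, generated by the \emph{finitary} closure. One shows that ${\mathcal F}(L)$ is modular (Lemmas~\ref{lem:interval-ortholattice-OML}--\ref{lem:complements-of-finite-elements}, which do use your finite-length modularity argument, but only on the intervals $\downset u$ with $u$ finite), deduces via \cite[(14.1)]{MaMa} that ${\mathcal S}({\mathcal A}(L))$ is an irreducible, compactly atomistic, modular lattice, and applies Theorem~\ref{thm:subspace-lattice-linear-spaces} to \emph{it}, obtaining ${\mathcal S}({\mathcal A}(L)) \cong {\mathcal L}(H)$. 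Theorem~\ref{thm:BiNe} then yields the Hermitian form, and $L$ is recovered as ${\mathcal C}({\mathcal A}(L)) \cong {\mathcal C}(H)$, which in general sits properly inside ${\mathcal L}(H)$. Your observation that the transported orthogonality and the form-induced orthogonality must be reconciled is a fair point, but it is handled automatically because Theorem~\ref{thm:BiNe} produces a form inducing the given relation; the real difficulty you needed to anticipate is the one above.
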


The remainder of this section is devoted to the presentation of a modified version of Theorem \ref{thm:orthomodular-space}. The idea is to characterise the ortholattice by sole reference to the finite elements.

With any element $u$ of an ortholattice $L$, we associate the sublattice
\[ \downset u \;=\; \{ a \in L \colon a \leq u \}. \]
Assume that we can make $\downset u$ into an ortholattice whose orthogonality relation coincides with the one inherited from $L$. Then the orthocomplementation is
\[ \ce{u} \colon \downset u \to \downset u \komma a \mapsto a\c \wedge u. \]
Indeed, let $'$ be an orthocomplementation on $\downset u$ such that, for $a, b \leq u$, we have that $a \leq b'$ if and only if $a \perp b$. Then, for any $a \in \downset u$, $a' \perp a$ implies $a' \leq a\ce{u}$, and $a\ce{u} \perp a$ implies $a\ce{u} \leq a'$.

We readily check that $\downset u$, equipped with $\ce{u}$, is an ortholattice if and only if $\ce{u}$ is involutive if and only if $(u\c,u)$ is a modular pair \cite[(29.10)]{MaMa}. Below, however, we will encounter a stronger condition, which actually ensures that $\downset u$ is even an OML.

\begin{lemma} \label{lem:interval-ortholattice-OML}
Let $L$ be an ortholattice and $u \in L$. Then the sublattice $\downset u$, endowed with $\ce{u}$, is an OML if and only if, for any $a \leq b \leq u$ there is a $c \perp a$ such that $a \vee c = b$.
\end{lemma}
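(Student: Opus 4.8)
The plan is to prove both implications directly, using the characterisation mentioned just before the lemma: $\downset u$ equipped with $\ce{u}$ is an ortholattice precisely when $\ce{u}$ is involutive, equivalently when $(u\c, u)$ is a modular pair. The stated condition—for any $a \le b \le u$ there is a $c \perp a$ with $a \vee c = b$—is visibly the ``relative orthomodularity'' condition inside $\downset u$, so the content of the lemma is essentially (a) that this condition forces $\ce{u}$ to be involutive in the first place, and (b) that, once $\downset u$ is known to be an ortholattice, the condition is literally the orthomodular law $a \le b \Rightarrow b = a \vee (a\ce{u} \wedge b)$ rephrased.

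First I would prove the forward direction. Assume $\downset u$ with $\ce{u}$ is an OML. Given $a \le b \le u$, the orthomodular law in $\downset u$ yields $b = a \vee (b \wedge a\ce{u})$. Set $c = b \wedge a\ce{u}$. Then $c \le a\ce{u} = a\c \wedge u \le a\c$, so $c \perp a$, and $a \vee c = b$. That direction is immediate.

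For the converse, suppose that for all $a \le b \le u$ there is $c \perp a$ with $a \vee c = b$. I first need to show $\downset u$ is an ortholattice, i.e.\ that $\ce{u}$ is involutive; equivalently $a\cce{u} = a$ for all $a \le u$. Since $\c$ is order-reversing and involutive on $L$ and $\ce{u} = (\,\cdot\,)\c \wedge u$, one always has $a \le a\cce{u}$. For the reverse inequality, apply the hypothesis with the pair $a \le a\cce{u} \le u$: there is $c \perp a$, i.e.\ $c \le a\c$, with $a \vee c = a\cce{u}$. Now $c \le a\c$ and $c \le a\cce{u} \le u$ give $c \le a\c \wedge u = a\ce{u}$; but also $c \le a\cce{u} = (a\ce{u})\ce{u}$, and since $\ce{u}$ reverses order and $c \le a\ce{u}$ gives $c\ce{u} \ge a\cce{u} \ge c$, combined with $c \le a\cce{u}$ we can squeeze $c$ between $a\ce{u}$ and its complement—so $c \le a\ce{u} \wedge a\cce{u} \le a\ce{u} \wedge (a\ce{u})\c = 0$ within $\downset u$ (here I use that $\ce{u}$, being defined by the inherited orthogonality, satisfies $d \wedge d\ce{u} = 0$ whenever it is a well-defined complement; the subtlety is that we do not yet know $\ce{u}$ is an orthocomplementation, so I would instead argue that $c$ is orthogonal to itself in $L$—$c \le a\c$ and $c \le a\cce{u}$ forces $c \perp c$ once $a\cce{u} \le a\ccc = a\c$—hence $c = 0$ by irreflexivity of $\perp$ in $L$, i.e.\ by $0$ being the only self-orthogonal element of an ortholattice). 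Thus $a\cce{u} = a \vee 0 = a$, so $\ce{u}$ is involutive and $\downset u$ is an ortholattice.

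Once $\downset u$ is an ortholattice, the orthomodular law is exactly the given hypothesis: for $a \le b \le u$ pick $c \perp a$, i.e.\ $c \le a\c$, with $a \vee c = b$; then $c \le a\c \wedge u = a\ce{u}$ and $c \le b$, so $c \le a\ce{u} \wedge b$, whence $b = a \vee c \le a \vee (a\ce{u} \wedge b) \le b$, giving the OML identity $b = a \vee (a\ce{u} \wedge b)$. The main obstacle is the bookkeeping in the converse's first step: establishing that $\ce{u}$ is involutive using only the inherited orthogonality and the covering-type hypothesis, without circularly presupposing that $\ce{u}$ is already an orthocomplementation. I expect the cleanest route is exactly the one above—reduce the offending join-complement $c$ to a self-orthogonal element of $L$ and invoke irreflexivity—rather than going through the modular-pair criterion, though one could alternatively cite \cite[(29.10)]{MaMa} to get involutivity and then only verify the OML law.
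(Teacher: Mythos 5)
Your proof is correct, and for the substantive (``if'') direction it takes a mildly different route from the paper's. The paper first shows $a \vee a\ce{u} = u$ for every $a \leq u$ (applying the hypothesis to the pair $a \leq u \leq u$) and then shows that any $b$ with $b \perp a$ and $a \vee b = u$ must equal $a\ce{u}$; involutivity of $\ce{u}$ follows from this uniqueness statement. You instead apply the hypothesis directly to the pair $a \leq a\cce{u} \leq u$ and show that the resulting orthogonal complement $c$ is $0$. Your key step --- $c \leq a\c \wedge u = a\ce{u}$ together with $c \leq a\cce{u} = (a\ce{u})\c \wedge u \leq (a\ce{u})\c$ forces $c \leq a\ce{u} \wedge (a\ce{u})\c = 0$ --- is sound and, contrary to your worry, not circular: the meet is computed in $L$, where $\c$ is already known to be an orthocomplementation, so $d \wedge d\c = 0$ for every $d \in L$, in particular for $d = a\ce{u}$. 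The two arguments are of comparable length; yours is marginally more direct, while the paper's establishes along the way the explicit facts $a \vee a\ce{u} = u$ and the uniqueness of orthogonal complements in $\downset u$.

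One genuine error occurs in an expendable aside: the inequality $a\cce{u} \leq a\ccc = a\c$ that you invoke in your ``instead'' reformulation is false whenever $a \neq 0$, since $a \leq a\cce{u}$ always holds, whereas $a \leq a\c$ forces $a = 0$. If you want to exhibit $c$ as self-orthogonal, derive it from $c \leq a\ce{u}$, which gives $c\c \geq (a\ce{u})\c \geq a\cce{u} \geq c$; the hypothesis $c \leq a\c$ alone does not do it. Since your primary justification already suffices, this does not invalidate the proof, but the aside should be deleted or corrected. Finally, note that with the paper's definition of orthomodularity (for $a \leq b$ there is $c \leq a\ce{u}$ with $b = a \vee c$), the closing step of your converse is immediate once $\downset u$ is known to be an ortholattice, so the detour through the identity $b = a \vee (a\ce{u} \wedge b)$ is not needed.
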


\begin{proof}
The ``only if'' part is clear by the definition of orthomodularity. To see the ``if'' part, assume that the latter indicated condition holds. It suffices to show that $\downset u$, equipped with $\ce{u}$, is an ortholattice.

For any $a \leq u$, we have that $a \vee a\ce{u} = u$. Indeed, choosing $b \leq u$ such that $b \perp a$ and $a \vee b = u$, we get $u = a \vee b \leq a \vee (a\c \wedge u) = a \vee a\ce{u} \leq u$.

Furthermore, for any $a, b \leq u$ such that $a \perp b$ and $a \vee b = u$, we have that $b = a\ce{u}$. Indeed, in this case $b \leq a\c \wedge u = a\ce{u}$ and hence $a\ce{u} = b \vee r$ for some $r \perp b$. It follows $u = a \vee a\ce{u} = a \vee b \vee r = u \vee r$ and hence $r = 0$.

We conclude that $a\cce{u} = a$ for any $a \in \downset u$ and it follows that $\downset u$ is indeed an ortholattice.
\end{proof}

We remark that the following lemma is shown on the basis of arguments analogous to those used for Lemma~\ref{lem:orthogonality-on-linear-space}(i).

\begin{lemma} \label{lem:finite-element-join-atom}
Let $L$ be an atomistic ortholattice such that, for any distinct atoms $p$ and $q$, there is an atom $r \perp p$ such that $p \vee q = p \vee r$. Then the following holds.
\begin{itemize}

\item[\rm (i)] An element $a$ of $L$ is finite if and only if $a$ is the join of finitely many mutually orthogonal atoms.

\item[\rm (ii)] Let $a$ be a finite element and $p$ an atom of $L$ not below $a$. Then there is a $q \perp a$ such that $a \vee p = a \vee q$.

\item[\rm (iii)] For any finite elements $a$ and $b$ such that $a \leq b$, there is a finite element $c \perp a$ such that $b = a \vee c$.

\end{itemize}
\end{lemma}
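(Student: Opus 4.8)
The plan is to mirror the proof of Lemma~\ref{lem:orthogonality-on-linear-space}(i), using the hypothesis to repeatedly ``swap'' an atom into the orthocomplement of those already collected. First I would treat part~(i). Given a finite element $a$, by definition it is either $0$ or a join $p_1 \vee \ldots \vee p_k$ of atoms; the non-trivial direction is to replace these by mutually orthogonal ones. I would argue inductively: suppose $e_1, \ldots, e_j$ are mutually orthogonal atoms with $e_1 \vee \ldots \vee e_j \leq a$ but $a \neq e_1 \vee \ldots \vee e_j$, so some $p_i$ is not below $e_1 \vee \ldots \vee e_j$. Using the hypothesis applied to $e_1$ and a suitable atom below $e_1 \vee p_i$ (or below $(e_1 \vee \ldots \vee e_j) \vee p_i$), I would produce an atom orthogonal to $e_1$ while keeping the join unchanged, then successively make it orthogonal to $e_2, \ldots, e_j$ as well, exactly as in Lemma~\ref{lem:orthogonality-on-linear-space}(i). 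Since $a$ is a join of finitely many atoms, the process terminates and yields $a$ as a join of finitely many mutually orthogonal atoms. The converse direction of~(i) is immediate since a finite join of atoms is finite by definition.

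For part~(ii), given a finite element $a$ and an atom $p \nleq a$, I would write $a = e_1 \vee \ldots \vee e_k$ with the $e_i$ mutually orthogonal (by part~(i)), and then apply the swapping procedure one more time to the pair $a, p$: starting from $p$, successively replace it by atoms orthogonal to $e_1$, then also to $e_2$, and so on, each time invoking the hypothesis and noting that the join $a \vee p$ is preserved at every step because we only exchange an atom $r$ below $e_i \vee r'$ for an atom $r'$ with $e_i \vee r = e_i \vee r'$. After $k$ steps we obtain an atom $q$ with $q \perp e_1, \ldots, e_k$, hence $q \perp a$, and $a \vee q = a \vee p$.

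Part~(iii) follows from~(ii) by induction on the number of atoms needed to express $b$ relative to $a$. Write $b = a \vee p_1 \vee \ldots \vee p_m$ where, using part~(i) applied to $b$ and discarding those atoms already below $a$, we may assume the $p_i$ are atoms. If $b = a$, take $c = 0$. Otherwise, pick $p_1 \nleq a$; by~(ii) there is $q_1 \perp a$ with $a \vee p_1 = a \vee q_1$. Now $a \vee q_1$ is finite, strictly above $a$ is not needed — rather I set $a' = a \vee q_1$ and observe $a \leq a' \leq b$ with $b = a' \vee p_2 \vee \ldots \vee p_m$, so by induction there is a finite $c' \perp a'$ with $b = a' \vee c'$; then $c = q_1 \vee c'$ works, since $c \perp a$ (as $q_1 \perp a$ and $c' \perp a' \geq a$) and $a \vee c = a \vee q_1 \vee c' = a' \vee c' = b$. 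The main obstacle is bookkeeping the swapping argument carefully enough to be sure the relevant joins are genuinely preserved at each exchange and that the hypothesis is applied to a legitimate pair of \emph{distinct} atoms; the structure of the argument is otherwise a routine adaptation of the orthogonalisation in Lemma~\ref{lem:orthogonality-on-linear-space}(i), which is why the excerpt simply flags the analogy.
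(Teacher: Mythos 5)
Your proposal is correct and follows essentially the same route as the paper, which packages the iterated orthogonalisation ``swap'' (modelled on Lemma~\ref{lem:orthogonality-on-linear-space}(i)) as a single claim and then reads off all three parts from it. Your explicit induction for part~(iii) and the distinctness bookkeeping (note that if some intermediate atom $q_j$ coincided with the next $e_{j+1}$, then $q$ would lie below $e_1\vee\ldots\vee e_{j+1}$, contrary to assumption) merely fill in details the paper leaves implicit.
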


\begin{proof}
The following claim implies both parts of the lemma:

($\star$) Let $p_1, \ldots, p_k$, $k \geq 0$, be mutually orthogonal atoms and let $q$ not below their join. Then there is an atom $r \perp p_1, \ldots, p_k$ such that $p_1 \vee \ldots \vee p_k \vee q = p_1 \vee \ldots \vee p_k \vee r$.

{\it Proof of {\rm ($\star$)}:} If $k = 0$, the assertion is trivial; assume that $k \geq 1$. By assumption, there is an atom $q_1 \perp p_1$ such that $p_1 \vee q = p_1 \vee q_1$. Similarly, there is an atom $q_2 \perp p_2$ such that $p_2 \vee q_1 = p_2 \vee q_2$. Then $q_2 \perp p_1, p_2$ and $p_1 \vee p_2 \vee q = p_1 \vee p_2 \vee q_1 = p_1 \vee p_2 \vee q_2$. Arguing repeatedly in a similar way, we see that there is an atom $r = q_k$ with the desired property.
\end{proof}

\begin{lemma} \label{lem:FL-ideal}
Let $L$ be an atomistic lattice with the finite covering property. Then ${\mathcal F}(L)$ consists of the elements of finite height and is an ideal of $L$.
\end{lemma}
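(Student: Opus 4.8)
The plan is to extract a small dimension theory for the finite elements of $L$ from the covering property, and then read off both assertions. First I would note an exchange law: for $a \in {\mathcal F}(L)$ and atoms $p, q$ with $p \leq a \vee q$ but $p \nleq a$, one has $q \leq a \vee p$. This follows at once from the finite covering property, since $p \nleq a$ forces $q \nleq a$, so both $a \vee p$ and $a \vee q$ cover $a$, while $a < a \vee p \leq a \vee q$ then forces $a \vee p = a \vee q$. By the classical Steinitz argument, the exchange law implies that the number $\dim a$ of atoms in an irredundant atom-generating set of a finite element $a$ is well defined (with $\dim 0 = 0$), that it is bounded by the size of any finite atom-generating set, and that adjoining to a finite element $b$ a single atom $p \nleq b$ yields a finite element of dimension $\dim b + 1$ (again because $b \vee p$ covers $b$).

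Next I would establish strict monotonicity: if $b < a$ with $a, b \in {\mathcal F}(L)$, then $\dim b < \dim a$. Writing $a = b \vee p_1 \vee \ldots \vee p_n$ with the $p_i$ atoms and interpolating the chain $b = b_0 \leq b_1 \leq \ldots \leq b_n = a$, where $b_j = b \vee p_1 \vee \ldots \vee p_j$, each step either leaves the element unchanged (when $p_j \leq b_{j-1}$) or raises the dimension by exactly one; since $b \neq a$, at least one step is of the second kind, so $\dim a \geq \dim b + 1$.

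The heart of the proof is downward closure: $c \leq a \in {\mathcal F}(L)$ implies $c \in {\mathcal F}(L)$. Here the naive induction ``$\dim a = n$, so the interval $\downset a$ has finite length, so $c$ is finite'' is circular. Instead I would approximate $c$ from below: pick atoms $r_1, r_2, \ldots \leq c$ with $r_{i+1} \nleq r_1 \vee \ldots \vee r_i$ for each $i$, which is possible as long as $r_1 \vee \ldots \vee r_i < c$, since $c$ is the join of the atoms below it. Every partial join $r_1 \vee \ldots \vee r_i$ is a finite element $\leq a$, hence of dimension $\leq \dim a$ by monotonicity; as the dimensions strictly increase, the process must stop after at most $\dim a$ steps, and when it stops one has $r_1 \vee \ldots \vee r_i = c$, so $c$ is finite. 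Thus ${\mathcal F}(L)$ is downward closed; being also a join-subsemilattice that contains $0$, it is an ideal of $L$.

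For the height characterisation: if $a = p_1 \vee \ldots \vee p_n$ is irredundant, then $0 < p_1 < p_1 \vee p_2 < \ldots < a$ is a chain of length $n$ in which each element covers its predecessor (by the finite covering property), hence a maximal chain; and by downward closure together with strict monotonicity of $\dim$, every chain in $\downset a$ has length $\leq \dim a$. So every finite element has finite height, equal to $\dim a$. Conversely, if $a$ has finite height $h$, the greedy choice of atoms below $a$ again yields a chain inside $\downset a$ in which each element covers its predecessor, so this chain has length $\leq h$, forcing the process to terminate after at most $h$ steps with $a = r_1 \vee \ldots \vee r_i$; thus $a$ is finite. I expect the downward-closure step to be the one genuine obstacle, precisely because one must bound $c$ before knowing that $c$ is finite --- passing to finite approximations from below is the device that circumvents this.
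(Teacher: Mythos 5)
Your proof is correct. The paper gives no argument of its own here but simply cites Maeda--Maeda (Section 8), and your development --- deriving the atom exchange law from the finite covering property, using Steinitz exchange to get a well-defined, strictly monotone dimension on finite elements, and then obtaining downward closure and the height characterisation via the bounded greedy approximation from below --- is essentially the standard argument that the citation points to.
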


\begin{proof}
See \cite[Section 8]{MaMa}.
\end{proof}

\begin{lemma} \label{lem:AC-ortholattice-of-finite-height}
Any AC ortholattice of finite length is modular.
\end{lemma}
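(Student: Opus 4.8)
The plan is to reduce the statement to a classical fact about modular pairs. Recall that in a lattice of finite length, modularity is equivalent to the condition that $(a,b)$ is a modular pair for all $a,b$; and in an atomistic lattice with the covering property, one has good control over modular pairs via the exchange property. So first I would recall, from \cite{MaMa}, that an AC lattice (atomistic with the covering property) of finite length is \emph{relatively complemented} and satisfies the Jordan--H\"older chain condition, so that a dimension (height) function $h$ is defined on all of $L$ and $h(a \vee b) + h(a \wedge b) \leq h(a) + h(b)$ for all $a,b$, with equality precisely when $(a,b)$ is a modular pair. Thus modularity of $L$ amounts to showing $h(a \vee b) + h(a \wedge b) = h(a) + h(b)$ for every pair $a,b$.

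The key step where the orthocomplementation enters is the symmetry of the modular-pair relation. In an AC lattice, $(a,b)$ a modular pair does not in general force $(b,a)$ a modular pair; but an orthocomplemented AC lattice of finite length is in fact an \emph{M-symmetric} (semimodular plus dually semimodular, i.e.\ modular-symmetric) lattice. I would argue this as follows: the covering property gives upper semimodularity ($a \wedge b \lessdot a$ implies $b \lessdot a \vee b$), so every pair $(a,b)$ with $a \wedge b \lessdot a$ is a modular pair; iterating along a maximal chain from $a \wedge b$ up to $a$, one gets that $(a,b)$ is a modular pair for \emph{all} $a,b$ — here one uses that in a lattice of finite length upper semimodularity already yields the full modular law once one also has the dual. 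The orthocomplement supplies the dual: applying $\c$ turns the covering property into its order dual (if $a \lessdot a \vee p$ for an atom $p \nleq a$, then dualising via $\c$ gives a lower-covering statement), so ${\mathcal C}$-type lattices that are AC and of finite length are also lower semimodular, hence modular by the standard fact that finite-length $+$ upper semimodular $+$ lower semimodular $\Rightarrow$ modular.

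Concretely, the cleanest route: show $L$ is upper semimodular from the covering property (standard), show $L\c \cong L$ as a lattice is again AC of finite length (the orthocomplementation is a lattice anti-isomorphism $L \to L$, and ``AC of finite length'' is a self-dual-up-to-anti-isomorphism package once we know atoms correspond to coatoms under $\c$ and the height is finite), hence $L$ is also lower semimodular, and finally invoke: a lattice of finite length that is both upper and lower semimodular is modular \cite[e.g.][Section 8 or standard lattice theory]{MaMa}. The main obstacle I anticipate is the bookkeeping in the second point — verifying that the covering property is genuinely inherited (in dual form) under $\c$, in particular that one really does get \emph{all} coatoms as complements of atoms and that heights behave correctly; but since $L$ has finite length this is essentially a counting argument with the height function and the Jordan--H\"older condition, and no infinite-dimensional subtleties arise. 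Everything else is citation of the finite-length semimodular lattice theory already compiled in \cite{MaMa}.
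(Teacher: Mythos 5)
Your argument is correct. The paper offers no argument of its own here — it simply cites \cite[(27.6)]{MaMa} — so there is no proof strategy to compare against; the route you describe (atomisticity plus the covering property give upper semimodularity; the orthocomplementation $x \mapsto x\c$ is a lattice anti-automorphism of $L$, so $L$ is also lower semimodular; and a lattice of finite length that is both upper and lower semimodular is modular) is the standard proof of this fact and is essentially what lies behind the cited result. Two small remarks. First, in your opening paragraph the claim that iterating the covering argument along a maximal chain already makes every pair a modular pair would, taken on its own, show that every finite-length upper semimodular lattice is modular, which is false; as you yourself note, the dual half supplied by the orthocomplementation is indispensable, and your ``cleanest route'' paragraph handles this correctly, so this is only a matter of phrasing. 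Second, the bookkeeping you worry about at the end is not needed: you do not have to verify that the dual lattice is again AC or that atoms correspond to coatoms with the right heights; it suffices that $\c$ is an order-reversing involutive bijection of $L$ onto itself, which transports the upper semimodularity of $L$ directly into lower semimodularity of $L$, after which the finite-length Jordan--H\"older argument closes the proof.
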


\begin{proof}
This follows from \cite[(27.6)]{MaMa}.
%
%
\end{proof}

\begin{lemma} \label{lem:complements-of-finite-elements}
Let $L$ be an atomistic ortholattice with the finite covering property and such that, for any distinct atoms $p$ and $q$, there is an atom $r \perp p$ such that $p \vee q = p \vee r$. Then ${\mathcal F}(L)$ is a modular sublattice of $L$.
\end{lemma}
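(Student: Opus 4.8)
The plan is to pass from $L$ to the principal ideal $\downset u$ generated by a suitably large finite element $u$, to show that this interval is an AC ortholattice of finite length, and then to invoke Lemma~\ref{lem:AC-ortholattice-of-finite-height}. First I would record that ${\mathcal F}(L)$ really is a sublattice of $L$: by Lemma~\ref{lem:FL-ideal} it is an ideal, hence a down-set, and it is in any case a join-subsemilattice; being downward closed it is closed under the meet of $L$. So the meet and join of $L$ restrict to ${\mathcal F}(L)$, and modularity of ${\mathcal F}(L)$ as a sublattice means precisely that, for $a, b, c \in {\mathcal F}(L)$ with $c \leq b$, the identity $(c \vee a) \wedge b = c \vee (a \wedge b)$ holds in $L$.

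Fix such $a, b, c$ and put $u = a \vee b \in {\mathcal F}(L)$. By Lemma~\ref{lem:FL-ideal} every element below $u$ is finite and $u$ has finite height, so $\downset u$ has finite length. I would next check that $\downset u$, equipped with $\ce u$, is an ortholattice: by Lemma~\ref{lem:interval-ortholattice-OML} it suffices to find, for each $x \leq y \leq u$, an element $z \perp x$ with $x \vee z = y$, and such a $z$ is supplied by Lemma~\ref{lem:finite-element-join-atom}(iii), since $x$ and $y$ are finite. Furthermore $\downset u$ is atomistic: its atoms are exactly the atoms of $L$ lying below $u$, and every element $\leq u$ is the join in $L$ (hence in $\downset u$) of the atoms of $L$ it dominates. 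And $\downset u$ has the covering property: for $x \leq u$ and an atom $p \leq u$ with $p \nleq x$, the element $x$ is finite, so by the finite covering property of $L$ it is covered by $x \vee p$, and as $x \vee p \leq u$ this covering persists in $\downset u$. Thus $\downset u$ is an AC ortholattice of finite length, and Lemma~\ref{lem:AC-ortholattice-of-finite-height} shows it is modular.

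It remains to note that the lattice operations of $\downset u$ agree with those of $L$ on elements below $u$ (meets and joins in $L$ of elements $\leq u$ again lie below $u$), so applying the modular law of $\downset u$ to $a, b, c$, which all lie below $u$ and satisfy $c \leq b$, yields $(c \vee a) \wedge b = c \vee (a \wedge b)$ in $L$, as required.

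The step I expect to demand the most care is the verification that $\downset u$ carries a compatible orthocomplementation, i.e.\ that $\ce u$ is involutive on $\downset u$; this is where the atom-exchange hypothesis of the statement actually enters, via Lemma~\ref{lem:finite-element-join-atom}(iii) feeding Lemma~\ref{lem:interval-ortholattice-OML}. By contrast, the atomisticity and the covering property of $\downset u$ are inherited from $L$ almost immediately, and the reduction of the modular identity from $L$ to the finite interval is purely formal.
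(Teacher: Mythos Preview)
Your proposal is correct and follows essentially the same route as the paper: localise to the interval $\downset u$ for $u=a\vee b$, use Lemma~\ref{lem:finite-element-join-atom}(iii) together with Lemma~\ref{lem:interval-ortholattice-OML} to see that $\downset u$ is an orthomodular lattice, observe it is AC of finite length, and then apply Lemma~\ref{lem:AC-ortholattice-of-finite-height}. You are merely more explicit than the paper about why $\downset u$ is atomistic and has the covering property, which the paper asserts without comment.
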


\begin{proof}
By Lemma \ref{lem:FL-ideal}, ${\mathcal F}(L)$ is an ideal, in particular a sublattice, of $L$.

Let $u$ be a finite element of $L$. Then $\downset u$ is an AC lattice of finite length. Let $a \leq b \leq u$. By Lemma~\ref{lem:finite-element-join-atom}(iii), there is a $c \perp a$ such that $a \vee c = b$. By Lemma \ref{lem:interval-ortholattice-OML}, it follows that $\downset u$ is an OML. By Lemma \ref{lem:AC-ortholattice-of-finite-height}, $\downset u$ is even modular.

Let $a, b \in {\mathcal F}(L)$. Then the pair $a, b$ fulfils the modularity condition~$(\ref{fml:modular})$ in $\downset{(a \vee b)}$ and consequently also in ${\mathcal F}(L)$. We conclude that ${\mathcal F}(L)$ is modular.
\end{proof}

We are now ready to show a version of Theorem \ref{thm:orthomodular-space} that refers to finite elements only.

\begin{theorem} \label{thm:representation-by-Hermitian-spaces-adapted}
Let $L$ be a complete atomistic ortholattice with the following properties:
\begin{itemize}

\item[\rm (H1)] $L$ fulfills the finite covering property;

\item[\rm (H2)] for any distinct atoms $p$ and $q$, there is an atom $r \perp p$ such that $p \vee q = p \vee r$;

\item[\rm (H3)] for any orthogonal atoms $p$ and $q$ there is a third atom below $p \vee q$;

\item[\rm (H4)] $L$ is of height $\geq 4$.
\end{itemize}
Then there is a $\star$-sfield $K$ and a Hermitian space $H$ over $K$ such that $L$ is isomorphic to the ortholattice ${\mathcal C}(H)$.
\end{theorem}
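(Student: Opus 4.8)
The strategy is to reduce Theorem~\ref{thm:representation-by-Hermitian-spaces-adapted} to the known Theorem~\ref{thm:orthomodular-space} by exhibiting, out of the given lattice $L$, a complete, irreducible, AC ortholattice of length $\geq 4$. The natural candidate is the lattice $\mathcal{C}(\mathcal{A}(L))$ of orthoclosed subsets of the atom space, equipped with its canonical orthocomplementation. The overall flow is: (1)~extract the orthoset $X = \mathcal{A}(L)$ with the inherited orthogonality and note that it is point-closed, so by Proposition~\ref{prop:ortholattices-OSs} it has its own complete atomistic ortholattice $\mathcal{C}(X)$; (2)~show, using (H1) and (H2), that $\mathcal{C}(X)$ has the covering property, hence is AC; (3)~deduce irreducibility from (H3); (4)~check that the length is still $\geq 4$ via (H4) and Lemma~\ref{lem:FL-ideal}; (5)~invoke Theorem~\ref{thm:orthomodular-space} to obtain a Hermitian space $H$ with $\mathcal{C}(H) \cong \mathcal{C}(X)$; (6)~finally argue that $L$ itself, and not merely $\mathcal{C}(X)$, is isomorphic to $\mathcal{C}(H)$ --- this last point requires relating $L$ to $\mathcal{C}(\mathcal{A}(L))$, and since $L$ need not be compactly atomistic, one must use the finite-element machinery: $\mathcal{F}(L) \cong \mathcal{F}(\mathcal{C}(X))$ as join-semilattices with orthocomplementation on intervals, and both $L$ and $\mathcal{C}(X)$ are recovered from their finite parts by completion.

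In more detail, for step~(2) I would argue as follows. By Lemma~\ref{lem:complements-of-finite-elements}, hypotheses (H1) and (H2) make $\mathcal{F}(L)$ a modular sublattice of $L$, and by Lemma~\ref{lem:finite-element-join-atom}(i) its elements are exactly the finite joins of mutually orthogonal atoms. Passing to $X = \mathcal{A}(L)$: the key is that the finitary closure $^-$ on $X$, restricted to finite subsets, is computed by joins in $\mathcal{F}(L)$, so the finite elements of $\mathcal{C}(X)$ correspond bijectively to those of $\mathcal{F}(L)$, and in particular $\mathcal{C}(X)$ inherits the finite covering property. To upgrade this to the full covering property on the complete lattice $\mathcal{C}(X)$, one uses that $\mathcal{C}(X)$ is generated under arbitrary joins by its atoms (the singletons) together with the compactness-type behaviour of the orthoclosure on the relevant configurations: given an orthoclosed $A$ and an atom $p \notin A$, the element $A \vee \{p\}$ is orthoclosed and one verifies it covers $A$ by reducing, via orthogonality, to the finite-covering situation inside a suitable finite sub-configuration --- the point being (H2), which lets one replace any atom by one orthogonal to $A$ and thereby keep everything inside a finite orthogonal frame. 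Irreducibility (step~3): an ortholattice direct-product decomposition of $\mathcal{C}(X)$ would split $X$ into a disjoint union of mutually orthogonal pieces, but (H3) --- the existence of a third atom below any $p \vee q$ with $p \perp q$ --- forbids this splitting exactly as in the classical argument (it is precisely the condition that rules out the "exchange lattice of height 2" being a direct factor). Length $\geq 4$ (step~4) is immediate: (H4) gives a chain of four atoms' joins in $\mathcal{F}(L)$, which transports to $\mathcal{F}(\mathcal{C}(X))$.

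For step~(6), the subtle point, I would proceed by showing $\mathcal{F}(L) \cong \mathcal{F}(\mathcal{C}(X))$ as ortho-structures (join-semilattices in which every interval $\downset u$ carries the orthocomplementation $\ce{u}$ of Lemma~\ref{lem:interval-ortholattice-OML}, which is applicable because (H2) provides the orthogonal-complement splitting required by that lemma). Then, because every element of a complete atomistic ortholattice is the join of the atoms below it --- equivalently, the join of the finite elements below it --- both $L$ and $\mathcal{C}(X)$ are the "ideal completions" of their common finite part $\mathcal{F}(L)$ in a canonical way that respects $\c$; hence $L \cong \mathcal{C}(X)$. Composing with $\mathcal{C}(X) \cong \mathcal{C}(H)$ finishes the proof.

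**The main obstacle.** The hard part is step~(2): establishing the covering property for the full, possibly infinite-length, lattice $\mathcal{C}(X)$ from only the \emph{finite} covering property (H1) of $L$. One has to show that adjoining a single atom to an arbitrary orthoclosed set still produces a covering, and since no compact-atomisticity is assumed, $\bigvee A$ for infinite $A$ can behave badly; the argument must localise the covering computation to a finite orthogonal sub-frame containing the relevant atom, and it is precisely hypothesis (H2) (replace an arbitrary atom by an orthogonal one) together with the ideal property of $\mathcal{F}(L)$ from Lemma~\ref{lem:FL-ideal} that make this localisation legitimate. A secondary technical nuisance is verifying that the orthocomplementation passes correctly through the identification $\mathcal{F}(L) \cong \mathcal{F}(\mathcal{C}(X))$, for which Lemma~\ref{lem:interval-ortholattice-OML} and the OML-ness of the finite intervals (already extracted inside the proof of Lemma~\ref{lem:complements-of-finite-elements}) do the essential work.
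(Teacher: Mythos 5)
Your route --- verify that ${\mathcal C}({\mathcal A}(L))$ is a complete, irreducible, AC ortholattice and then invoke Theorem~\ref{thm:orthomodular-space} --- is not the paper's, and it founders exactly where you yourself locate the main obstacle. Since $L \cong {\mathcal C}({\mathcal A}(L))$ by Proposition~\ref{prop:ortholattices-OSs} (which, incidentally, disposes of your step~(6) at once: no ``ideal completion'' of the finite part is needed, and that completion argument would in any case be false, e.g.\ for the closed-subspace lattice of an infinite-dimensional Hilbert space), establishing the full covering property for ${\mathcal C}({\mathcal A}(L))$ amounts to establishing it for $L$ itself. But the hypotheses only give the \emph{finite} covering property, and your proposed localisation does not go through: given an orthoclosed $A$ and $A \subset B \subseteq A \vee \{p\}$, you would pick $q \in B \setminus A$ and try to trap $q$ inside a finite configuration $r_1 \vee \ldots \vee r_k \vee p$ with $r_1, \ldots, r_k \in A$; since $A \vee \{p\} = (A \cup \{p\})^{\perp\perp}$ and no compact atomisticity is assumed, membership of $q$ in this join has no finite witness, and Lemma~\ref{lem:finite-element-join-atom}(ii) --- the only available tool for replacing $p$ by an atom orthogonal to $A$ --- applies only to finite $A$. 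This is precisely the difficulty that the theorem's formulation ``by sole reference to the finite elements'' is designed to circumvent, so the gap is not a technicality.

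The paper avoids the issue by never passing through Theorem~\ref{thm:orthomodular-space}. It works instead with the lattice ${\mathcal S}({\mathcal A}(L))$ of subspaces for the \emph{finitary} closure $^-$, which is compactly atomistic by Lemma~\ref{lem:LAL}; there every element below a join of atoms is below a finite subjoin, so the exchange argument from (H1) does yield the full covering property, (H1) and (H2) give modularity of ${\mathcal F}(L)$ and hence of ${\mathcal S}({\mathcal A}(L))$, and (H2) with (H3) give irreducibility. Theorem~\ref{thm:subspace-lattice-linear-spaces} then produces a linear space $H$ with ${\mathcal S}({\mathcal A}(L)) \cong {\mathcal L}(H)$, the orthogonality transported to $P(H)$ makes it an orthogeometry (with (H2) supplying (OG2)), Theorem~\ref{thm:BiNe} supplies the Hermitian form, and finally $L \cong {\mathcal C}({\mathcal A}(L)) \cong {\mathcal C}(H)$ by Proposition~\ref{prop:ortholattices-OSs}. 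If you wish to keep your reduction to Theorem~\ref{thm:orthomodular-space}, you would need an independent proof that $L$ is AC, which in this generality seems to require the very representation you are trying to establish.
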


\begin{proof}
Recall from Section \ref{sec:atom-spaces} that ${\mathcal S}({\mathcal A}(L))$ is the complete lattice of subspaces of the atom space ${\mathcal A}(L)$. By Lemma \ref{lem:LAL}, ${\mathcal S}({\mathcal A}(L))$ is compactly atomistic. We claim that ${\mathcal S}({\mathcal A}(L))$ fulfills the covering property. Indeed, let $P \subset Q \subseteq P \vee \{p\}$ for some $P, Q \in {\mathcal S}({\mathcal A}(L))$ and $p \in {\mathcal A}(L)$. Choosing $q \in Q \setminus P$, we have that $q \leq r_1 \vee \ldots \vee r_k \vee p$ for some $r_1, \ldots, r_k \in P$, and since $q \wedge (r_1 \vee \ldots \vee r_k) = 0$, it follows from (H1) that $p \leq r_1 \vee \ldots \vee r_k \vee q$ and thus $Q = P \vee \{p\}$.

By Lemma \ref{lem:complements-of-finite-elements}, (H1) and (H2) imply that ${\mathcal F}(L)$ is a modular sublattice of $L$. Furthermore, by Lemma \ref{lem:LAL}, $\omega \colon {\mathcal F}(L) \to {\mathcal F}({\mathcal S}({\mathcal A}(L))) \komma a \mapsto \{ p \in {\mathcal A}(L) \colon p \leq a \}$ is an isomorphism of join-semilattices. As $\omega$ also preserves meets, it is actually an isomorphism of lattices. We conclude that also ${\mathcal F}({\mathcal S}({\mathcal A}(L)))$ is a modular sublattice of ${\mathcal S}({\mathcal A}(L))$. By \cite[(14.1)]{MaMa}, it follows that ${\mathcal S}({\mathcal A}(L))$ is modular.

Finally, assume that ${\mathcal S}({\mathcal A}(L))$ is reducible. By \cite[(16.6)]{MaMa}, there are two distinct atoms below whose join there is no further atom. That is, there are distinct atoms $p, q \in L$ such that there is no third atom below $p \vee q$. If $p$ and $q$ are not orthogonal, this contradicts (H2), otherwise this contradicts (H3). We conclude that ${\mathcal S}({\mathcal A}(L))$ is irreducible.

We summarise that ${\mathcal S}({\mathcal A}(L))$ is an irreducible, compactly atomistic, modular lattice. As $L$ can be order-embedded into ${\mathcal S}({\mathcal A}(L))$, (H4) implies that the length of ${\mathcal S}({\mathcal A}(L))$ is at least $4$. By Theorem \ref{thm:subspace-lattice-linear-spaces}, there is a linear space $H$ over an sfield $K$ such that ${\mathcal S}({\mathcal A}(L))$ is isomorphic to the lattice ${\mathcal L}(H)$ of subspaces of $H$. Moreover, there is an order embedding $\phi \colon L \to {\mathcal L}(H)$, which establishes an isomorphism between the respective sublattices ${\mathcal F}(L)$ and ${\mathcal F}({\mathcal L}(H))$.

In particular, $\phi$ induces a bijection between the atom space ${\mathcal A}(L)$ and the set of one-dimensional subspaces $P(H)$. Under this correspondence, we can make $P(H)$ into an orthoset. In fact, $(P(H),\perp)$ is then even an orthogeometry: (OG1) obviously holds and (OG2) follows from (H2).

We conclude that, by Theorem \ref{thm:BiNe}, there is an involutorial antiautomorphism~$^\star$ of $K$ and a Hermitian form $\herm{\cdot}{\cdot}$ on $H$ based on $^\star$ that induces the orthogonality relation on $P(H)$. As $({\mathcal A}(L), \perp)$ is isomorphic to $(P(H), \perp)$, we have that ${\mathcal C}({\mathcal A}(L))$ is isomorphic to ${\mathcal C}(P(H))$. But by Proposition \ref{prop:ortholattices-OSs}, $L$ is isomorphic to ${\mathcal C}({\mathcal A}(L))$, and we may naturally identify ${\mathcal C}(P(H))$ with ${\mathcal C}(H)$. We have hence shown that $L$ is isomorphic to ${\mathcal C}(H)$.
\end{proof}

\section{Automorphisms of orthosets, ortholattices, and Hermitian spaces}
\label{sec:automorphisms}

We have discussed in Sections \ref{sec:atom-spaces} and \ref{sec:subspace-lattices} the correlation between orthosets, ortholattices, and Hermitian spaces, but we have not considered the question whether these correspondences can be extended to include the respective structure-preserving maps. We compile in this short section the results on the topic that are relevant for us. We deal exclusively with automorphisms.

To start with, there is an obvious correspondence between the automorphism group of a complete atomistic ortholattice and the automorphism group of its associated orthoset.

An {\it automorphism} of an orthoset $(X,\perp)$ is meant to be a bijection $\phi \colon X \to X$ such that, for any $p, q \in X$, $p \perp q$ if and only if $\phi(p) \perp \phi(q)$. The group of automorphisms of $(X,\perp)$ will be denoted by $\Aut(X)$.

\begin{proposition}
Let $L$ be a complete atomistic ortholattice. The automorphisms of $L$ and the corresponding orthoset $({\mathcal A}(L), \perp)$ are in a natural one-to-one correspondence.
\end{proposition}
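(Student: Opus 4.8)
The plan is to exhibit the correspondence explicitly in both directions and check that the two assignments are mutually inverse group homomorphisms. Recall from Proposition~\ref{prop:ortholattices-OSs} that for a complete atomistic ortholattice $L$, the map $\omega \colon L \to {\mathcal C}({\mathcal A}(L))$ is an isomorphism of ortholattices, and that the atoms of $L$ are precisely the elements $p \in {\mathcal A}(L)$, while the atoms of ${\mathcal C}({\mathcal A}(L))$ are the singletons $\{p\}$. The key structural fact I would invoke is that in a complete atomistic lattice every element is the join of the atoms below it, so a lattice automorphism is completely determined by its action on atoms, and conversely any permutation of the atoms that preserves the relation ``$p$ lies below the join of a set of atoms'' extends to a lattice automorphism.

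First I would define, for a lattice automorphism $\Phi$ of $L$, the map $\hat\Phi \colon {\mathcal A}(L) \to {\mathcal A}(L)$ by restriction: since $\Phi$ preserves the order, it sends atoms to atoms, and since $\Phi^{-1}$ does too, $\hat\Phi$ is a bijection of ${\mathcal A}(L)$. Because $L$ is an ortholattice automorphism, $\Phi$ preserves $\c$, hence preserves the orthogonality relation $p \perp q \iff p \leq q\c$ on atoms; thus $\hat\Phi \in \Aut({\mathcal A}(L))$. Conversely, given $\psi \in \Aut({\mathcal A}(L))$, I would extend it to a bijection of ${\mathcal C}({\mathcal A}(L))$ by $A \mapsto \psi(A)$ (direct image). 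One checks that $\psi$ preserves $\cc$ — indeed $\psi(A\c) = \psi(A)\c$ follows directly from $\psi$ preserving $\perp$, and then $\psi(A\cc) = \psi(A)\cc$ — so $\psi$ maps orthoclosed sets to orthoclosed sets and is an ortholattice automorphism of ${\mathcal C}({\mathcal A}(L))$; transporting along $\omega$ gives an ortholattice automorphism $\check\psi$ of $L$.

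Next I would verify that these two assignments are group homomorphisms (they respect composition, because both restriction-to-atoms and direct-image are functorial) and that they are inverse to each other. For $\Phi \mapsto \hat\Phi \mapsto \check{\hat\Phi}$: the automorphism $\check{\hat\Phi}$ of $L$ and $\Phi$ agree on atoms by construction, and since a complete atomistic lattice automorphism is determined by its values on atoms, they coincide. For $\psi \mapsto \check\psi \mapsto \widehat{\check\psi}$: both are bijections of ${\mathcal A}(L)$ agreeing on singletons/atoms, hence equal. This establishes the claimed one-to-one correspondence, which moreover is a group isomorphism.

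The only genuine point requiring care — and the step I would flag as the main obstacle, though it is mild — is the assertion that a lattice automorphism of a complete atomistic lattice is determined by its restriction to atoms, and conversely that a $\perp$-preserving bijection of atoms does extend. The first half is immediate from $a = \bigvee\{p \in {\mathcal A}(L) : p \leq a\}$ and the fact that automorphisms preserve arbitrary joins. The second half is exactly the content of the isomorphism $\omega \colon L \xrightarrow{\sim} {\mathcal C}({\mathcal A}(L))$ from Proposition~\ref{prop:ortholattices-OSs}: the extension is performed on the concrete model ${\mathcal C}({\mathcal A}(L))$ via direct image, where preservation of the closure operator $\cc$ is a one-line consequence of preservation of $\perp$, and then pulled back through $\omega$. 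Everything else is routine bookkeeping.
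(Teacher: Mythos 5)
Your argument is correct and is exactly the paper's approach — the paper's proof is a two-sentence sketch (restrict an automorphism of $L$ to the atoms; conversely, extend an orthogonality-preserving bijection of ${\mathcal A}(L)$ to $L$), and your write-up simply fills in the details, using $\omega$ and the direct-image construction on ${\mathcal C}({\mathcal A}(L))$ for the extension step, which is the natural way to make the sketch precise.
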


\begin{proof}
Any automorphism of $L$ restricts to an orthogonality-preserving bijection of ${\mathcal A}(L)$. Conversely, any orthogonality-preserving bijection of ${\mathcal A}(L)$ extends to an automorphism of $L$.
\end{proof}

Let us next consider orthosets arising from some Hermitian space $H$. The correspondence between the automorphisms of $(P(H), \perp)$ and those of $H$ is described by a suitable version of Wigner's Theorem; see, e.g.,~\cite{May}.

A linear automorphism $U$ of a Hermitian space $H$ is called {\it unitary} if $U$ preserves the Hermitian form. The group of unitary operators on $H$ is denoted by $\U(H)$ and we write $I$ for its identity. We moreover denote the centre of the scalar $\star$-sfield $K$ by $Z(K)$ and we let $U(K) = \{ \epsilon \in K \colon \epsilon \epsilon^\star = 1 \}$ be the set of unit elements of $K$.

\begin{theorem} \label{thm:Wigner}
Let $H$ be a Hermitian space over the $\star$-sfield $K$ of dimension $\geq 3$. For any unitary operator $U$, the map
\begin{equation} \label{fml:map-induced-by-unitary-operator}
P(U) \colon P(H) \to P(H) \komma \lin x \mapsto \lin{U(x)}
\end{equation}
is an automorphism of $(P(H), \perp)$. The map $P \colon \U(H) \to \Aut(P(H))$ is a homomorphism, whose kernel is $\{ \epsilon I \colon \epsilon \in Z(K) \cap U(K) \}$.

Conversely, let $\phi$ be an automorphism of $(P(H), \perp)$ and assume that there is an at least two-dimensional subspace $S$ of $H$ such that $\phi(\lin x) = \lin x$ for any $x \in S\withoutzero$. Then there is a unique unitary operator $U$ on $H$ such that $\phi = P(U)$ and $U|_S$ is the identity.
\end{theorem}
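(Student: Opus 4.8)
The first three claims are verified directly. A linear bijection carries one-dimensional subspaces to one-dimensional subspaces, so $P(U)$ is a well-defined bijection of $P(H)$, and since $U$ is unitary, $\herm{Ux}{Uy}=0$ exactly when $\herm x y=0$, so $P(U)$ preserves $\perp$ in both directions. From $P(UV)(\lin x)=\lin{UVx}=P(U)(\lin{Vx})=\bigl(P(U)P(V)\bigr)(\lin x)$ we see that $P$ is a group homomorphism. For the kernel, assume $P(U)=\id$, i.e.\ $Ux\in\lin x$ for every $x\in H\withoutzero$; writing $Ux=\epsilon_x x$ and comparing $U(x+y)$ with $Ux+Uy$ for linearly independent $x,y$ shows that $\epsilon_x$ does not depend on $x$, say $\epsilon_x=\epsilon$. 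Linearity of $U$ forces $\alpha\epsilon=\epsilon\alpha$ for all $\alpha\in K$, hence $\epsilon\in Z(K)$, and unitarity yields $\epsilon\epsilon^\star\herm x x=\herm x x$, hence $\epsilon\in U(K)$; conversely each such $\epsilon I$ is unitary and lies in the kernel.

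The converse is a version of Wigner's theorem (cf.\ \cite{May}); the plan is to produce a semilinear map from $\phi$ and then rigidify it using the hypothesis on $S$. First I would check that $\phi$ is an automorphism of the projective space $P(H)$. Since $\phi$ preserves $\perp$, it preserves $\c$ and $\cc$, hence maps orthoclosed subsets of $P(H)$ to orthoclosed subsets. For linearly independent $u,v$ the subspace $\lin{u,v}$ is finite-dimensional and so has an orthogonal basis by Lemma~\ref{lem:orthogonality-on-linear-space}(i), whence $\lin{u,v}\cc=\lin{u,v}$; therefore $P(\lin{u,v})=\{\lin u,\lin v\}\cc$ is orthoclosed, and $\phi$ carries the projective line through $\lin u$ and $\lin v$ onto the projective line through $\phi(\lin u)$ and $\phi(\lin v)$. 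As $\phi$ is bijective with inverse of the same kind, $\phi$ is a collineation, and since $\dim H\geq 3$ the fundamental theorem of projective geometry (\cite{Bae}, \cite{FaFr2}) yields a semilinear bijection $g\colon H\to H$ with $\phi=P(g)$, unique up to a scalar multiple.

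Next I would exploit $S$ to normalize $g$. Fix linearly independent $x_0,y_0\in S$. Since $\phi$ fixes $P(S)$ pointwise, $g$ fixes the lines $\lin{x_0}$, $\lin{y_0}$ and $\lin{x_0+y_0}$; comparing $g(x_0+y_0)$ with $g(x_0)+g(y_0)$ shows that $g(x_0)$ and $g(y_0)$ carry the same scalar factor, so after replacing $g$ by a scalar multiple we may assume $g(x_0)=x_0$ and $g(y_0)=y_0$. Writing $\sigma$ for the accompanying automorphism of $K$ and applying the same comparison to $g(\alpha x_0+y_0)$ for arbitrary $\alpha\in K$ forces $\sigma(\alpha)=\alpha$; hence $\sigma=\id$, $g$ is linear, and one further comparison gives $g|_S=\id$. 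It remains to show that $g$ is unitary. Now $(u,v)\mapsto\herm{gu}{gv}$ is a Hermitian form with respect to $^\star$ inducing the same orthogonality relation on $P(H)$ as $\herm\cdot\cdot$; by the rigidity of anisotropic Hermitian forms with a prescribed orthogonality relation on a space of dimension $\geq 3$, there is a central scalar $a$ with $\herm{gu}{gv}=\herm u v\,a$ for all $u,v$, and evaluating at $x_0$, where $gx_0=x_0$ and $\herm{x_0}{x_0}\neq 0$, gives $a=1$. Hence $U:=g$ is the desired unitary operator with $\phi=P(U)$ and $U|_S=\id$. Uniqueness follows from the kernel computation: if $U,U'$ are unitary with $P(U)=P(U')=\phi$ and $U|_S=U'|_S=\id$, then $U^{-1}U'=\epsilon I$ with $\epsilon\in Z(K)\cap U(K)$, and $\epsilon=1$ since $U^{-1}U'$ fixes $x_0\in S$.

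I expect the form-rigidity step to be the main obstacle: one must know that two anisotropic Hermitian (indeed, sesquilinear) forms on a space of dimension $\geq 3$ that induce the same orthogonality relation differ by a central scalar factor. This is classical — essentially the uniqueness counterpart of Theorem~\ref{thm:BiNe} — and I would invoke it from the literature (e.g.\ \cite{FaFr2} or \cite{May}) rather than reprove it; the remaining steps are elementary linear algebra together with the fundamental theorem of projective geometry.
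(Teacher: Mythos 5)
Your proposal is correct. Note, however, that the paper does not prove Theorem~\ref{thm:Wigner} at all: it is stated as a known variant of Wigner's theorem with a pointer to the literature (Mayet~\cite{May}), so there is no in-paper argument to compare against. What you supply is the standard route to such results, and it is sound: the direct part and the kernel computation are elementary (your argument that $\epsilon_x$ is independent of $x$, that linearity forces $\epsilon\in Z(K)$, and that unitarity forces $\epsilon\epsilon^\star=1$ is exactly right); for the converse, you correctly reduce to the fundamental theorem of projective geometry by observing that $\{\lin u,\lin v\}\cc=P(\lin{u,v})$ (which needs the orthogonal decomposition $H=\lin{u,v}\oplus\lin{u,v}\c$, available from Lemma~\ref{lem:orthogonality-on-linear-space}(i) and anisotropy), then kill the field automorphism and the scalar ambiguity using the pointwise-fixed subspace $S$ of dimension $\geq 2$, and finally identify $\herm{g\cdot}{g\cdot}$ with $\herm\cdot\cdot$ via the uniqueness counterpart of Theorem~\ref{thm:BiNe}. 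Two small remarks: after rescaling $g$ by $\lambda_0^{-1}$ the accompanying automorphism becomes the conjugate $\alpha\mapsto\lambda_0^{-1}\sigma(\alpha)\lambda_0$, which does no harm but is worth stating before you prove it is the identity; and in the rigidity step you do not actually need centrality of the scalar $a$, since evaluating $\herm{x_0}{x_0}=\herm{x_0}{x_0}a$ already gives $a=1$. The one genuinely external input, the proportionality of two anisotropic Hermitian forms inducing the same orthogonality in dimension $\geq 3$, is classical and appropriately cited rather than reproved.
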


Let $(X,\perp)$ be an orthoset. We shall be interested in groups of automorphisms of $(X,\perp)$ that act so-to-say minimally, that is, we shall deal with groups keeping every\-thing orthogonal to some element and its image fixed. For any $A \subseteq X$, we define
\[ \Aut(X,A) \;=\; \{ \phi \in \Aut(X) \;\colon \text{ $\phi(e) = e$ for any $e \perp A$} \}. \]
For a Hermitian space $H$, we may observe that $\Aut(P(H),P(S))$ can be conveniently be described by a subgroup of $\U(H)$, provided that $S \in {\mathcal C}(H)$ is such that $S\c$ is of dimension $\geq 2$. Indeed, in this case, we let
\begin{equation} \label{fml:UHS}
\U(H,S) \;=\; \{ U \in \U(H) \colon U|_{S\c} = \id_{S\c} \},
\end{equation}
and we have, by Theorem \ref{thm:Wigner}, that $P \colon \U(H,S) \to \Aut(P(H),P(S))$ is an isomorphism of groups.

Assume now that $H$ is a quadratic space, that is, $K$ is a (commutative) field and $^\star$ the identity. In this case, the unitary operators are commonly called {\it orthogonal} and the unitary group, now called the {\it orthogonal group}, is denoted by $\O(H)$. For some subspace $S$ of $H$, also the group defined in (\ref{fml:UHS}) is likewise denoted by $\O(H,S)$.

Let $U \in \O(H)$ be such that, for some finite-dimensional subspace $S$, $U|_{S\c}$ is the identity. Then $S$ is an invariant subspace and $U|_S$ is an orthogonal operator on $S$. Furthermore, the determinant of $U|_S$, which is $1$ or $-1$, does not depend on $S$. Indeed, if $\tilde S$ is a further finite-dimensional subspace such that $U|_{{\tilde S}\c}$ is the identity, then $\det U|_{\tilde S} = \det U|_{S+\tilde S} = \det U|_S$. Hence it makes sense to define $\SO(H)$ as the subgroup consisting of all $U \in \O(H)$ such that, for some finite-dimensional subspace $S$ of $H$, $U|_{S\c}$ is the identity and $U|_S$ has determinant $1$. If $H$ is finite-dimensional, this is the usual special orthogonal group.

We define $\SO(H,S) = \SO(H) \cap \O(H,S)$. If $S$ is two-dimensional, we call the elements of $\SO(H,S)$ {\it simple rotations}. We claim that $\SO(H)$ is generated by the simple rotations. Indeed, $\O(H)$ is generated by the orthogonal reflections along a hyperplane; see, e.g., \cite[Theorem~6.6]{Gro}. Hence every element of $\SO(H)$ is the product of an even number of such reflections. But the product of two reflections along hyperplanes is a simple rotation and thus the claim follows.

\section{Homogeneous transitivity}
\label{sec:homogeneous-transitivity}

We now turn to the main concern of this paper: we shall consider orthosets that possess, in a certain sense, a rich set of symmetries.

For any pair of distinct elements $e$ and $f$ of $X$ we consider the subgroup of $\Aut(X)$
\begin{align*}
G_{ef} & \;=\; \Aut(X,\{e,f\}) \\
& \;=\; \{ \phi \in \Aut(X) \colon \phi(x) = x \text{ for all $x \perp e,f$} \}.
\end{align*}
Intuitively, we shall understand $G_{ef}$ as consisting of those automorphisms that move the element $e$ to an element in the direction of $f$, in a way that the elements orthogonal to the ``base point'' $e$ and the ``destination point'' $f$ are kept fixed.

Our guiding example is the following.

\begin{example} \label{ex:real-Hilbert-space-1}
Let $H$ be real Hilbert space of dimension $\geq 4$. Obviously, $H$ is a quadratic space. Let $(P(H), \perp)$ be the orthoset associated with $H$. Let $u, v \in H\withoutzero$ be linearly independent, such that $\lin u$ and $\lin v$ are distinct elements of $P(H)$. Then, according to our remarks after Theorem \ref{thm:Wigner}, we have an isomorphism $P \colon \O(H, \lin{u,v}) \to G_{\lin u \lin v}$. That is, $G_{\lin u \lin v}$ can be identified with the group of orthogonal operators on $H$ that possess the invariant subspace $\lin{u,v}$ and keep its orthogonal complement elementwise fixed.
\end{example}

We will require our orthoset $(X,\perp)$ to be transitive in the sense that, for any $e \neq f$, $G_{ef}$ actually contains an automorphism that maps $e$ to $f$. Furthermore, we will postulate the homogeneity of $(X,\perp)$, in the sense that the subgroups $G_{ef}$, $e \neq f$, of $\Aut(X)$ are pairwise conjugate via an automorphism that preserves the ``base points''. We recall that a subgroup $G_1$ of $\Aut(X)$ is said to be {\it conjugate} to a further subgroup $G_2$ via some $\tau \in \Aut(X)$ if $G_1 = \tau^{-1} \, G_2 \, \tau$.

\begin{definition}
We call an orthoset $(X,\perp)$ {\it homogeneously transitive} if, for any distinct $e, f \in X$, the following holds:
\begin{itemize}[leftmargin=3em]

\item[\rm (HT1)] There is a $\phi \in G_{ef}$ such that $\phi(e) = f$.

\item[\rm (HT2)] For any further distinct elements $e'$ and $f'$, $G_{ef}$ is conjugate to $G_{e'f'}$ via an automorphism that maps $e$ to $e'$.

\end{itemize}

\end{definition}

Let us check that our main example belongs to this kind of orthosets.

\begin{example} \label{ex:real-Hilbert-space-2}
Let again $H$ be a real Hilbert space of dimension $\geq 4$. We claim that $(P(H), \perp)$ is a homogeneously transitive orthoset. Let $u, v \in H\withoutzero$ be linearly independent. By the isomorphism $P \colon \O(H, \lin{u,v}) \to G_{\lin u \lin v}$, it is clear that {\rm (HT1)} holds. Let $u', v' \in H\withoutzero$ be a further pair of linearly independent vectors. Then there is an orthogonal operator $U$ that maps $\lin u$ to $\lin {u'}$ and $\lin{u,v}$ to $\lin{u',v'}$. Conjugating $G_{\lin {u'} \lin {v'}}$ via $P(U)$ thus gives $G_{\lin u \lin v}$. Hence $(P(H),\perp)$ fulfills also {\rm (HT2)}.
\end{example}

As the next example shows, there are homogeneously transitive orthosets of a completely different kind.

\begin{example} \label{ex:Boolean-OS}
For any set $X$, $(X,\neq)$ is an orthoset. We readily check that $(X,\neq)$ is homogeneously transitive.
\end{example}

Note that, for an orthoset of the form $(X,\neq)$ as indicated in Example \ref{ex:Boolean-OS}, ${\mathcal C}(X)$ is the Boolean algebra of all subsets of $X$. Hence we will call such orthosets {\it Boolean}. Boolean orthosets are certainly not what we are interested in. But we will show that the only remaining homogeneously transitive orthosets are those arising from Hermitian spaces in the same manner as indicated in Example \ref{ex:real-Hilbert-space-2}.

Let us now fix a homogeneously transitive orthoset $(X,\perp)$ and let us assume that $(X,\perp)$ is not Boolean. By a {\it $\perp$-set}, we mean a subset of $X$ consisting of mutually orthogonal elements. The {\it rank} of $X$ is the smallest cardinal number $\lambda$ such that any $\perp$-set is of cardinality $\leq \lambda$. We assume $X$ to have a rank of at least $4$. Our aim is to verify that Theorem~\ref{thm:representation-by-Hermitian-spaces-adapted} applies to ${\mathcal C}(X)$, the ortholattice associated with $(X,\perp)$.

We begin by showing those conditions which we know to ensure the representability of $(X,\perp)$ by means of a Hermitian space if $(X,\perp)$ has a finite rank \cite{Vet2}.

\begin{lemma} \label{lem:HT-L1-L2}
$(X,\perp)$ has the following properties.
\begin{itemize}

\item[\rm (L1)] For any distinct elements $e$ and $f$, there is a $\bar e \perp e$ such that $\{e,f\}\cc = \{e,\bar e\}\cc$.

\item[\rm (L2)] For any orthogonal elements $e$ and $f$, there is a third element $g \in \{e,f\}\cc$.

\end{itemize}
\end{lemma}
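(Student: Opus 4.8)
The plan is to derive (L1) directly from (HT1) and to derive (L2) from (HT2) together with the non-Booleanness of $(X,\perp)$. For (L1), fix distinct $e,f \in X$. By (HT1) there is $\phi \in G_{ef}$ with $\phi(e) = f$. Since $\phi$ fixes every element of $\{e,f\}\c$, it preserves the orthoclosed set $\{e,f\}\cc$ setwise (indeed $\{e,f\}\cc = \{e,f\}\ccc\c$ and $\phi$ fixes $\{e,f\}\c$ pointwise, hence fixes $\{e,f\}\cc$ setwise). The key observation is that $\{e,f\}\cc$ has rank $2$: any $\perp$-set contained in it, being orthogonal to every element of $\{e,f\}\c$, would together with a maximal $\perp$-set of $\{e,f\}\c$ contradict the fact that $\{e,f\}\c$ is the orthocomplement of a rank-$\leq 2$ set; more simply, $\{e,f\}\cc$ is generated by the two elements $e,f$, so by the representation results it corresponds to an at most $2$-dimensional subspace — but to stay elementary I would argue that since $e \in \{e,f\}\cc$, there is some $\bar e \in \{e,f\}\cc$ with $\bar e \perp e$ unless $\{e,f\}\cc = \{e\}\cc$; and the latter is impossible because $\phi(e) = f \neq e$ forces $f \in \{e,f\}\cc \setminus \{e\}\cc$ (here I use that $(X,\perp)$ is point-closed, which follows since it is homogeneously transitive and non-Boolean — or I invoke strong irredundancy). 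Having produced $\bar e \perp e$ in $\{e,f\}\cc$, I claim $\{e,\bar e\}\cc = \{e,f\}\cc$: the inclusion $\subseteq$ is clear since $e,\bar e \in \{e,f\}\cc$; for $\supseteq$, note $\{e,\bar e\}\c \supseteq \{e,f\}\c$ would follow once we know $f \in \{e,\bar e\}\cc$, and this holds because inside the rank-$2$ orthoclosed set $\{e,f\}\cc$, the orthocomplement of $\{e\}$ relative to it is exactly $\{\bar e\}$ (a rank-$2$ closed set with a distinguished non-zero element has a unique orthogonal partner up to closure), so $f$, being in $\{e,f\}\cc$, lies in $\{e,\bar e\}\cc$.

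For (L2), fix orthogonal $e,f$. Since $(X,\perp)$ is not Boolean, the relation $\perp$ is strictly coarser than $\neq$ somewhere; I would use (HT2) to propagate this. Concretely: non-Booleanness means there exist distinct $e_0, f_0$ that are \emph{not} orthogonal, hence $\{e_0,f_0\}\cc$ properly contains $\{e_0,f_0\}$ — pick a third element $g_0 \in \{e_0,f_0\}\cc$. Now I want a third element below $\{e,f\}\cc$ for \emph{orthogonal} $e,f$; the natural route is first to establish, via (HT1), that any two-element orthoclosed set $\{e,f\}\cc$ (for $e \neq f$, regardless of orthogonality) has the same "size" in the sense of containing more than two elements, by transporting the configuration $(e_0, f_0, g_0)$. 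Here (HT2) is the right tool: given orthogonal $e,f$, apply (HT1) in reverse — there is $\psi \in G_{ef}$ with $\psi(e) = f$; since $e \perp f = \psi(e)$, and $\psi$ fixes $\{e,f\}\c$, I analyze the orbit of $e$ under $G_{ef}$ inside $\{e,f\}\cc$ to locate a third point. Alternatively, and perhaps more cleanly: using (HT2), $G_{ef}$ is conjugate to $G_{e_0 f_0}$, and the conjugating automorphism carries $\{e_0,f_0\}\cc$ to $\{e,f\}\cc$ while carrying $g_0$ to a third element of $\{e,f\}\cc$; one must check the conjugating map sends $\{e_0,f_0\}\cc$ onto $\{e,f\}\cc$, which follows because conjugation by $\tau$ sends the fixed-point structure of $G_{e_0 f_0}$ to that of $G_{ef}$, and $\{e_0,f_0\}\cc$ is recoverable from $G_{e_0 f_0}$ as the complement of its common fixed set.

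I expect the main obstacle to be the bookkeeping around \emph{which} orthoclosed set is preserved by the relevant automorphisms and establishing that $\{e,f\}\cc$ is exactly the "moving part" $(\mathrm{Fix}\,G_{ef})\c$ — i.e.\ that $\{x : x \perp e,f\}\c = \{e,f\}\cc$, so that $G_{ef}$ acts on $\{e,f\}\cc$ and the conjugation in (HT2) transports these sets correctly. Once that identification is in place, (L1) is essentially the statement that the rank-$2$ closed set $\{e,f\}\cc$ has an orthogonal pair of generators (true because it embeds, via the earlier sections, in a $2$-dimensional Hermitian space, where orthogonal bases exist by Lemma~\ref{lem:orthogonality-on-linear-space}(i) — but I will avoid circularity by arguing directly from (HT1) as above), and (L2) is the transport of a single witness of non-Booleanness through (HT2). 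I would also need to confirm point-closedness of $(X,\perp)$ as a preliminary step, since it is used implicitly; this should follow from (HT1) by a short argument (if $\{p\}\cc \supsetneq \{p\}$, take $q \in \{p\}\cc$, $q \neq p$; then $p,q$ are non-orthogonal, and (HT1) applied to the pair $p,q$ combined with the structure of $G_{pq}$ yields the required rigidity), or else I would simply add point-closedness as a standing hypothesis consistent with the paper's setup.
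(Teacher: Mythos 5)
There is a genuine gap, and it is concentrated in (L1). You claim to derive (L1) ``directly from (HT1)'', but you never actually construct the element $\bar e \perp e$ inside $\{e,f\}\cc$: the dichotomy ``there is some $\bar e \in \{e,f\}\cc$ with $\bar e \perp e$ unless $\{e,f\}\cc = \{e\}\cc$'' is precisely the statement to be proved and is not justified by anything preceding it (you rightly refuse to invoke the Hermitian-space representation, since that would be circular --- but then nothing is left). The paper's proof shows that (HT2) is indispensable here: fix an orthogonal pair $g \perp h$ (possible since the rank is $\geq 4$), use (HT2) to get $\tau$ with $\tau(g)=e$ and $G_{gh}=\tau^{-1}G_{ef}\tau$, and transport the transition $\phi(g)=h$ (from (HT1) applied to $g,h$) to $\psi=\tau\phi\tau^{-1}\in G_{ef}$; then $\bar e := \psi(e)=\tau(h)\perp\tau(g)=e$ is the required element, and the two inclusions $\{e,f\}\c\subseteq\{e,\bar e\}\c$ and $\{e,\bar e\}\c\subseteq\{e,f\}\c$ are verified by applying $\psi$, respectively a conjugate $\tau^{-1}\chi\tau\in G_{gh}$ of an automorphism $\chi\in G_{ef}$ sending $e$ to $f$, to points of the relevant orthocomplements. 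Your further claim that $f\in\{e,\bar e\}\cc$ because a ``rank-$2$ closed set has a unique orthogonal partner up to closure'' is likewise an unproved assertion about bare orthosets at this stage of the development.

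For (L2) your second route is essentially the paper's (conjugate $G_{ef}$ to $G_{e'f'}$ for a non-orthogonal pair $e',f'$ and pull back), but the justification you offer for why the conjugator carries $\{e',f'\}\cc$ onto $\{e,f\}\cc$ --- recovering $\{e,f\}\cc$ as $(\operatorname{Fix}G_{ef})\c$ --- is not available here: the equality $\operatorname{Fix}G_{ef}=\{e,f\}\c$ is Lemma~\ref{lem:HT-orbits}(ii), proved only later using the finite covering property. It is also unnecessary: one only needs the easy inclusion $G_{ef}(e)\subseteq\{e,f\}\cc$ (each $\phi\in G_{ef}$ fixes $\{e,f\}\c$ pointwise and preserves $\perp$), and the third element is obtained directly as $g=\tau^{-1}(f')$, which lies in $G_{ef}(e)$, is not orthogonal to $e$ (else $f'\perp e'$) and is not equal to $e$ (else $e'=f'$), hence differs from both $e$ and $f$ because $e\perp f$. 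Similarly, your opening claim that a non-orthogonal distinct pair automatically has a third point in its biorthogonal closure is unsupported --- and, in the paper's argument, unneeded.
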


\begin{proof}
Ad (L1): Let $g, h \in X$ an arbitrary pair of orthogonal elements. As we assume $(X,\perp)$ to have a rank of at least $4$, such a pair exists. By (HT2), there is an automorphism $\tau$ of $(X,\perp)$ such that $\tau(g) = e$ and $G_{gh} = \tau^{-1} \, G_{ef} \, \tau$. Then $\tau \, G_{gh}(g) = G_{ef}(\tau(g)) = G_{ef}(e)$. By (HT1), there is a $\phi \in G_{gh}$ such that $\phi(g) = h$. Then $e = \tau(g) \perp \tau(h) = \tau\phi(g) = \tau\phi\tau^{-1}(e)$. Then $\psi =  \tau\phi\tau^{-1} \in G_{ef}$ and $\bar e = \psi(e) = \tau(h) \perp e$.

Assume $x \perp e, f$. Then $x = \psi(x) \perp \psi(e) = \bar e$. Hence $\{ e, f \}\c \subseteq \{ e, \bar e \}\c$. Conversely, assume $x \perp e, \bar e$. Then $\tau^{-1}(x) \perp g, h$. By (HT1), there is a $\chi \in G_{ef}$ such that $\chi(e) = f$. Then $\tau^{-1} \chi \tau \in G_{gh}$. We conclude $\tau^{-1}(x) = \tau^{-1} \chi \tau(\tau^{-1}(x)) \perp \tau^{-1} \chi \tau(g) = \tau^{-1} \chi(e) = \tau^{-1}(f)$, that is, $x \perp f$. Hence also $\{e, \bar e\}\c \subseteq \{e, f \}\c$. The assertion follows.

Ad (L2): Let $e'$ and $f'$ be distinct elements of $X$ such that $e' \notperp f'$. As we assume $\perp$ not to coincide with $\neq$, such a pair exists. By (HT2), there is an automorphism $\tau$ such that $\tau(e) = e'$ and $G_{ef} = \tau^{-1} \, G_{e'f'} \, \tau$. By (HT1), there is a $\phi \in G_{e'f'}$ such that $\phi(e') = f'$. Then $\tau^{-1} \phi \tau \in G_{ef}$ and $g = \tau^{-1}(f') = \tau^{-1} \phi \tau (e) \in G_{ef}(e)$. From $g \perp e$ it would follow $f' \perp e'$, and from $g = e$ it would follow $e' = \phi(e') = f'$, both a contradiction. Hence $g$ is an element of $G_{ef}(e)$ distinct from $e$ and $f$. We furthermore have that $G_{ef}(e) \perp \{e,f\}\c$. Hence $g \in \{e,f\}\cc$.
\end{proof}

The next proposition implies that we may identify $(X,\perp)$ with the orthoset associated with ${\mathcal C}(X)$.

\begin{lemma} \label{lem:C-is-atomistic}
$(X,\perp)$ is point-closed. Consequently, ${\mathcal C}(X)$ is a complete atomistic ortholattice, the atoms being $\{e\}$, $e \in X$.
\end{lemma}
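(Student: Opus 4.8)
The plan is to show that any singleton is orthoclosed, i.e.\ $\{e\}\cc = \{e\}$ for every $e \in X$; the second assertion then follows immediately from Proposition~\ref{prop:ortholattices-OSs} applied to the point-closed orthoset $(X,\perp)$, since that proposition tells us ${\mathcal C}(X)$ is a complete atomistic ortholattice whose atoms are exactly the singletons $\{e\}$, $e \in X$. So the real content is the point-closedness. Equivalently (as noted after the definition of point-closed), it suffices to show strong irredundancy: for distinct $e, f \in X$ we must produce some $x \in X$ with $x \perp e$ but $x \notperp f$, or symmetrically.

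First I would recall that $(X,\perp)$ is assumed homogeneously transitive and non-Boolean, and of rank $\geq 4$; the key tool is Lemma~\ref{lem:HT-L1-L2}, which gives us (L1) and (L2). The most direct route is to use (L1): given distinct $e$ and $f$, suppose for contradiction that $\{e\}\c \subseteq \{f\}\c$, so every element orthogonal to $e$ is orthogonal to $f$. Take $\bar e \perp e$ with $\{e,f\}\cc = \{e,\bar e\}\cc$ as furnished by (L1). Since $\bar e \perp e$, our assumption forces $\bar e \perp f$, whence $f \in \{e,\bar e\}\c = \{e,f\}\ccc = \{e,f\}\c$, so $f \perp f$, contradicting irreflexivity. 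This shows $\{e\}\c \not\subseteq \{f\}\c$ for distinct $e,f$, which is precisely strong irredundancy, hence point-closedness.

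I expect the only subtlety to lie in the bookkeeping with the closure operator: one must be careful that $\{e,\bar e\}\c$ and $\{e,f\}\c$ coincide, which follows from $\{e,\bar e\}\cc = \{e,f\}\cc$ by applying $\c$ once and using $A\ccc = A\c$ for any $A \subseteq X$ (a standard property of the closure $\cc$ arising from a Galois connection). There is no genuine obstacle here; the lemma is essentially a direct corollary of (L1) from Lemma~\ref{lem:HT-L1-L2}, together with the elementary lattice-theoretic facts already assembled in Section~\ref{sec:atom-spaces}. The main thing to get right is to phrase the contradiction cleanly so that irreflexivity of $\perp$ is the property being violated.
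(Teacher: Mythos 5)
Your overall strategy is sound and, unlike the paper's own proof (which simply defers to \cite[Lemma 3.2]{Vet2}), it is self-contained: reduce point-closedness to strong irredundancy and derive the latter from (L1) of Lemma~\ref{lem:HT-L1-L2} plus irreflexivity. However, the central step as written does not go through. From the assumption $\{e\}\c \subseteq \{f\}\c$ and $\bar e \perp e$ you correctly get $\bar e \perp f$, but you then conclude $f \in \{e,\bar e\}\c$. Membership in $\{e,\bar e\}\c$ requires $f$ to be orthogonal to \emph{both} $e$ and $\bar e$, and you have only established $f \perp \bar e$; the relation $f \perp e$ is not available (and if it were, you would already be done, since $f \in \{e\}\c \subseteq \{f\}\c$ yields $f \perp f$ directly). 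So the displayed chain $f \in \{e,\bar e\}\c = \{e,f\}\ccc = \{e,f\}\c$ starts from an unjustified premise.

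The repair is immediate: apply the same reasoning to $\bar e$ rather than to $f$. Since $\bar e \perp e$ by the choice in (L1) and $\bar e \perp f$ by your containment assumption, you have $\bar e \in \{e,f\}\c = \{e,\bar e\}\c$, whence $\bar e \perp \bar e$, contradicting irreflexivity. (Your bookkeeping $\{e,f\}\cc = \{e,\bar e\}\cc \Rightarrow \{e,f\}\c = \{e,\bar e\}\c$ via $A\ccc = A\c$ is fine.) With this one-line correction the argument is complete and valid for arbitrary rank, and the second assertion does follow from Proposition~\ref{prop:ortholattices-OSs} as you say.
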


\begin{proof}
To show that $\{e\}\cc = \{e\}$ for any $e \in X$, we may argue as in case of in \cite[Lemma 3.2]{Vet2}; the proof applies also without the assumption of a finite rank. For the additional assertion, see Proposition \ref{prop:ortholattices-OSs}.
\end{proof}

We now turn to the verification of conditions (H1)--(H4) of Theorem \ref{thm:representation-by-Hermitian-spaces-adapted}.

\begin{lemma} \label{lem:H2-H4}
${\mathcal C}(X)$ fulfils {\rm (H2)}, {\rm (H3)}, and {\rm (H4)}.
\end{lemma}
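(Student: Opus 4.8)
The plan is to verify conditions (H2), (H3), and (H4) for the ortholattice ${\mathcal C}(X)$ by translating each one into a statement about the orthoset $(X,\perp)$, using the identification of atoms of ${\mathcal C}(X)$ with points of $X$ provided by Lemma~\ref{lem:C-is-atomistic}, and then invoking the structural facts already established in Lemma~\ref{lem:HT-L1-L2}.

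First I would handle (H2). An atom of ${\mathcal C}(X)$ is a singleton $\{p\}$ with $p \in X$, and in ${\mathcal C}(X)$ the join of two atoms $\{p\}$ and $\{q\}$ is $\{p,q\}\cc$, while $\{p\} \perp \{q\}$ in the lattice sense means precisely $p \perp q$ in $X$. So (H2) asks: for distinct $p, q \in X$ there is $r \perp p$ with $\{p,q\}\cc = \{p,r\}\cc$. This is exactly the content of (L1), so (H2) follows immediately from Lemma~\ref{lem:HT-L1-L2}(L1).

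Next, (H3) asks that for orthogonal atoms $\{p\},\{q\}$ there be a third atom below their join $\{p,q\}\cc$; equivalently, for $p \perp q$ in $X$ there is a third element $g \in \{p,q\}\cc$. This is exactly (L2), so (H3) follows from Lemma~\ref{lem:HT-L1-L2}(L2). Finally, (H4) requires ${\mathcal C}(X)$ to have height $\geq 4$. Since ${\mathcal C}(X)$ is atomistic with atoms the singletons, a chain of length $n$ in ${\mathcal C}(X)$ corresponds to a strictly increasing tower of orthoclosed sets, and one gets such a chain of length $\geq 4$ from any $\perp$-set of cardinality $\geq 4$: if $e_1,\ldots,e_4$ are mutually orthogonal, then $\emptyset \subsetneq \{e_1\}\cc \subsetneq \{e_1,e_2\}\cc \subsetneq \{e_1,e_2,e_3\}\cc \subsetneq \{e_1,e_2,e_3,e_4\}\cc$ is a strictly increasing chain (strictness because each $e_{i+1}$ is orthogonal to $e_1,\ldots,e_i$ hence lies outside $\{e_1,\ldots,e_i\}\cc$ by irreflexivity, while it lies inside the next term). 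Since $X$ is assumed to have rank $\geq 4$, such a $\perp$-set exists, so the height of ${\mathcal C}(X)$ is at least $4$, giving (H4). The proof requires essentially no new ideas beyond these dictionary translations; the only mild subtlety is confirming the strict inclusions in the (H4) chain, which rests on irreflexivity of $\perp$ and the definition of the closure operator, and is the step I would write out with the most care.
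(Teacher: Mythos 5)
Your proof is correct and follows the same route as the paper: (H2) and (H3) are read off directly from (L1) and (L2) of Lemma~\ref{lem:HT-L1-L2} via the identification of atoms with singletons, and (H4) comes from the $5$-element chain built from a $\perp$-set of size $4$. The paper merely states the chain argument for (H4) without spelling out the strict inclusions, which you verify correctly using irreflexivity.
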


\begin{proof}
By property (L1) in Lemma \ref{lem:HT-L1-L2}, (H2) holds.

From property (L2) in Lemma \ref{lem:HT-L1-L2}, we conclude that below the join of two orthogonal atoms of ${\mathcal C}(X)$ there is a third atom. This means that ${\mathcal C}(X)$ fulfills (H3).

As we have assumed $(X,\perp)$ to be of rank $\geq 4$, ${\mathcal C}(X)$ contains at least $4$ mutually orthogonal elements and hence a $5$-element chain. We conclude that also (H4) holds in ${\mathcal C}(X)$.
\end{proof}

It remains to check (H1), the finite covering property. Some preparatory steps are necessary.

\begin{lemma} \label{lem:automorphisms-between-finite-elements}
Let $e_1, \ldots, e_k$ and $f_1, \ldots, f_k$ be pairwise orthogonal elements of $X$, respectively. Then there is an automorphism $\phi$ of $X$ such that $\phi(e_1) = f_1, \ldots \phi(e_k) = f_k$ and $\phi(x) = x$ for any $x \perp e_1, \ldots, e_k, f_1, \ldots, f_k$.
\end{lemma}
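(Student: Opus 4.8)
The plan is to argue by induction on $k$, using only property (HT1) together with the defining property of $G_{ef}$, namely that its members fix every element orthogonal to both $e$ and $f$. For $k=0$ one takes $\phi=\id$; for $k=1$, if $e_1=f_1$ take $\phi=\id$, and otherwise apply (HT1) to obtain $\phi\in G_{e_1f_1}$ with $\phi(e_1)=f_1$, which by definition already fixes every $x\perp e_1,f_1$.

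For the inductive step, assume the statement for $k-1$. First I would move $e_1$ onto $f_1$: put $\psi=\id$ if $e_1=f_1$, and otherwise choose $\psi\in G_{e_1f_1}$ with $\psi(e_1)=f_1$ by (HT1); in both cases $\psi(x)=x$ whenever $x\perp e_1,f_1$. Set $e_i'=\psi(e_i)$ for $i=2,\dots,k$. Since $\psi$ is an automorphism and $e_2,\dots,e_k$ are pairwise orthogonal and all orthogonal to $e_1$, the elements $e_2',\dots,e_k'$ are again pairwise orthogonal and each is orthogonal to $f_1=\psi(e_1)$; likewise $f_2,\dots,f_k$ are pairwise orthogonal and all orthogonal to $f_1$.

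Next I would apply the inductive hypothesis to the two $(k-1)$-tuples $(e_2',\dots,e_k')$ and $(f_2,\dots,f_k)$, obtaining an automorphism $\chi$ with $\chi(e_i')=f_i$ for $i=2,\dots,k$ and $\chi(x)=x$ for every $x\perp e_2',\dots,e_k',f_2,\dots,f_k$. The crucial observation is that $f_1$ is orthogonal to all of $e_2',\dots,e_k',f_2,\dots,f_k$, hence $\chi(f_1)=f_1$ automatically. Putting $\phi=\chi\circ\psi$ then gives $\phi(e_1)=\chi(f_1)=f_1$ and $\phi(e_i)=\chi(e_i')=f_i$ for $i\geq 2$. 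Finally, if $x\perp e_1,\dots,e_k,f_1,\dots,f_k$, then $\psi(x)=x$ (as $x\perp e_1,f_1$); moreover $x\perp e_i$ yields $x=\psi(x)\perp\psi(e_i)=e_i'$, and $x\perp f_i$ holds by hypothesis, so $x$ is orthogonal to all of $e_2',\dots,e_k',f_2,\dots,f_k$ and therefore $\chi(x)=x$; hence $\phi(x)=x$, which closes the induction.

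I expect the only real difficulty to be the bookkeeping: one must arrange the construction so that the partial destination $f_1$ lies in the orthocomplement of all the still-to-be-matched elements (this is what forces the inductively supplied $\chi$ to fix $f_1$), and one must check carefully that the set the composite $\chi\circ\psi$ has to fix pointwise is exactly $\{e_1,\dots,e_k,f_1,\dots,f_k\}^\perp$. Notably, neither (HT2) nor the lattice-theoretic machinery of the previous sections is needed here; the degenerate cases (such as $e_1=f_1$, or coincidences among the $e_i$ and $f_j$) are harmless, with irreflexivity of $\perp$ ruling out the problematic overlaps.
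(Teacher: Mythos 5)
Your proposal is correct and follows essentially the same route as the paper: induction on $k$, first sending $e_1$ to $f_1$ via (HT1), then applying the inductive hypothesis to $\psi(e_2),\dots,\psi(e_k)$ and $f_2,\dots,f_k$, with the key point that $f_1$ is orthogonal to all of these and is therefore fixed by the second automorphism. The only (harmless) difference is that you spell out the degenerate cases $k=0$ and $e_1=f_1$, which the paper leaves implicit.
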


\begin{proof}
We proceed by induction over $k$. For $k = 1$, the assertion holds by (HT1). Let $k \geq 2$ and assume the assertion holds for any two sets of $k-1$ mututally orthogonal elements. Let $e_1, \ldots, e_k$ and $f_1, \ldots, f_k$ be pairwise orthogonal, respectively. We have to show that, for some automorphism $\phi$, $\,\phi(e_1) = f_1, \ldots, \phi(e_k) = f_k$ and $\phi(x) = x$ for any $x \perp e_1, \ldots, e_k, f_1, \ldots, f_k$.

By (HT1), there is a $\phi'$ such that $\phi'(e_1) = f_1$ and $\phi'(x) = x$ for any $x \perp e_1, f_1$. Then $\phi'(e_2), \ldots, \phi'(e_k)$ and $f_2, \ldots, f_k$ are pairwise orthogonal, respectively. By assumption there is an automorphism $\tilde\phi$ such that $\tilde\phi(\phi'(e_2)) = f_2, \ldots, \tilde\phi(\phi'(e_k)) = f_k$ and $\tilde\phi(x) = x$ for $x \perp \phi'(e_2), \ldots, \phi'(e_k), f_2, \ldots, f_k$. Then $\tilde\phi \circ \phi'$ fulfils the requirements. Indeed, we have $\tilde\phi(\phi'(e_1)) = \tilde\phi(f_1) = f_1$, and $x \perp e_1, \ldots, e_k, f_1, \ldots, f_k$ implies $x = \phi'(x) \perp \phi'(e_2), \ldots \phi'(e_k)$ and hence $\tilde\phi(\phi'(x)) = \tilde\phi(x) = x$.
\end{proof}

\begin{lemma} \label{lem:finite-height}
Let $e_1, \ldots, e_k$, $k \geq 1$, be mutually orthogonal elements of $X$, let $A = \{ e_1, \ldots, e_k \}\cc$, and let $Q \subseteq A$ be a $\perp$-set. Then $Q$ contains at most $k$ elements. Moreover, $A = Q\cc$ if and only if $Q$ contains exactly $k$ elements.
\end{lemma}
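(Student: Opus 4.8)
The statement is purely lattice/orthoset combinatorial, and the natural strategy is to reduce it to facts we have already established about finite elements of ${\mathcal C}(X)$. First I would observe that, since $e_1,\dots,e_k$ are mutually orthogonal atoms of ${\mathcal C}(X)$, the element $a = \{e_1\}\vee\cdots\vee\{e_k\}$ is a finite element, and $A = \{e_1,\dots,e_k\}\cc$ is precisely $a$ viewed as an orthoclosed set, i.e.\ $A = \omega(a)$ under the identification of Lemma~\ref{lem:C-is-atomistic}. Now ${\mathcal C}(X)$ satisfies (H2) by Lemma~\ref{lem:H2-H4}, so by Lemma~\ref{lem:finite-element-join-atom}(i) the finite elements are exactly the joins of finitely many mutually orthogonal atoms, and more importantly $\downset a$ has finite length. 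The cardinality bound $\abs Q \leq k$ should then follow because any $\perp$-set $Q \subseteq A$ consists of mutually orthogonal atoms all below $a$, and mutually orthogonal atoms are independent (their join has height equal to their number), so a $\perp$-set of size $m$ inside $A$ forces $a$ to have height $\geq m$; but $a$ has height exactly $k$ by Lemma~\ref{lem:finite-element-join-atom}(i) again (the $e_i$ already exhibit $a$ as a join of $k$ mutually orthogonal atoms, and height is well-defined in the finite-length interval $\downset a$).

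**The two directions of the equivalence.**
For the ``if'' direction, suppose $Q \subseteq A$ is a $\perp$-set with exactly $k$ elements, say $Q = \{q_1,\dots,q_k\}$. Then $b = \{q_1\}\vee\cdots\vee\{q_k\}$ is a finite element of height $k$ lying below $a$, which also has height $k$; since $\downset a$ has finite length and $b \leq a$ with equal heights, $b = a$, hence $Q\cc = \omega(b) = \omega(a) = A$. For the ``only if'' direction, suppose $\abs Q = m < k$. Then $Q\cc = \omega(b)$ where $b$ is the join of the $m$ atoms in $Q$, and $b$ has height $m < k = \operatorname{ht}(a)$, so $b < a$ strictly, whence $Q\cc \subsetneq A$. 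The one point requiring a little care is that $A = Q\cc$ really does translate to $a = b$ at the lattice level — but this is immediate since $\omega$ is an order isomorphism onto ${\mathcal C}({\mathcal A}(L))$ and $\omega(b) = Q\cc$, $\omega(a) = A$ by construction.

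**Main obstacle.**
The only genuine obstacle is establishing that $A$ itself has the shape of a finite element and that heights behave correctly, i.e.\ that no $\perp$-set in $A$ can exceed $k$ and that $A$ is ``filled out'' by any maximal $\perp$-set of size $k$. This is where (H2) is essential: without it, a maximal $\perp$-set inside $A$ might fail to span $A\cc$ in the closure sense. The clean way to get this is to invoke Lemma~\ref{lem:finite-element-join-atom} (whose hypothesis (H2) holds by Lemma~\ref{lem:H2-H4}): part~(i) tells us $a$ is a height-$k$ element, and part~(iii), applied to $b \leq a$, lets us write $a = b \vee c$ with $c \perp b$ a finite element; if $b \ne a$ then $c \ne 0$, so $c$ dominates some atom $r \perp b$, and $\{q_1,\dots,q_m,r\}$ — or rather any atom below $c$ together with $Q$ — is a strictly larger $\perp$-set below $a$. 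Iterating, any maximal $\perp$-set below $a$ has size exactly $k$ and its closure is $A$; a non-maximal one has size $< k$ and strictly smaller closure. I would present the argument in exactly this order: translate to $\downset a$, invoke Lemma~\ref{lem:finite-element-join-atom}(i) for the height-$k$ fact and the bound $\abs Q \leq k$, then use (iii) to show a $\perp$-set of size $k$ is maximal hence closes up to $A$, and conversely size $<k$ gives a proper subset.
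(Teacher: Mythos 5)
There is a genuine gap. Your argument rests on dimension theory in $\downset a$: that the join of $k$ mutually orthogonal atoms has height exactly $k$, that height is well defined, and that $b \leq a$ with equal finite heights forces $b = a$. None of this is available at this point in the development. Lemma~\ref{lem:finite-element-join-atom}(i) only characterises the finite elements as joins of finitely many mutually orthogonal atoms; it says nothing about height. From mutual orthogonality alone you get that no $e_i$ lies below the join of the others, hence height~$\geq k$, but the upper bound --- and the equicardinality of maximal $\perp$-sets below $a$, which is the heart of the lemma --- requires an exchange/covering property. That property is precisely (H1), which is established only afterwards in Lemma~\ref{lem:finite-CP}, and whose proof \emph{uses} Lemma~\ref{lem:finite-height}. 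So your route is circular: you are implicitly assuming the finite covering property (or Jordan--Dedekind chain condition) to prove the lemma that is needed to prove the finite covering property. The one sound piece is your use of Lemma~\ref{lem:finite-element-join-atom}(iii) to show that a $\perp$-set in $A$ that cannot be extended within $A$ has orthoclosure equal to $A$; but that does not bound its size by $k$.

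The paper closes this gap by a completely different mechanism, namely the transitivity of the automorphism group rather than lattice dimension theory. Given any $k$ mutually orthogonal $f_1, \ldots, f_k \in A$, Lemma~\ref{lem:automorphisms-between-finite-elements} supplies an automorphism $\phi$ with $\phi(e_i) = f_i$ fixing $A\c$ pointwise; since $\phi$ extends to an automorphism of ${\mathcal C}(X)$, it fixes $A$ setwise and carries $\{e_1\}\cc \vee \cdots \vee \{e_k\}\cc = A$ to $\{f_1\}\cc \vee \cdots \vee \{f_k\}\cc$, forcing $\{f_1,\ldots,f_k\}\cc = A$. The bound $\abs{Q} \leq k$ then falls out: a $(k{+}1)$-st mutually orthogonal element of $A$ would lie in $A \cap A\c$, contradicting irreflexivity, and the ``only if'' direction follows by applying the same statement to the elements of $Q$ in place of the $e_i$. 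If you want to keep a purely lattice-theoretic argument you would first have to prove the covering property by other means; as the paper is structured, the automorphism argument is what does the real work here.
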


\begin{proof}
Let $f_1, \ldots, f_k$ be mutually orthogonal elements of $A$. We shall show that $A = \{ f_1, \ldots, f_k \}\cc$. Both assertions will then follow.

Let $B = \{ f_1, \ldots, f_k \}\cc$. By Lemma \ref{lem:automorphisms-between-finite-elements}, there is an automorphism $\phi$ of $X$ such that $\phi(e_1) = f_1, \ldots, \phi(e_k) = f_k$ and $\phi(x) = x$ for $x \in A\c$. By Proposition \ref{prop:ortholattices-OSs}, $\phi$ induces an automorphism of ${\mathcal C}(X)$. Hence $\phi(A\c) = A\c$ implies $\phi(A) = A$, and we have $B \subseteq A = \phi(A) = \phi(\{e_1\}\cc \vee \ldots \vee \{e_k\}\cc) = \{f_1\}\cc \vee \ldots \vee \{f_k\}\cc = B$, that is, $A = B$ as asserted.
\end{proof}

\begin{lemma} \label{lem:finite-CP}
${\mathcal C}(X)$ has the finite covering property, that is, ${\mathcal C}(X)$ fulfills {\rm (H1)}.
\end{lemma}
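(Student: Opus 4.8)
The plan is to first identify the finite elements of $\mathcal{C}(X)$ and then verify the finite covering property by counting the sizes of the $\perp$-sets contained in an orthoclosed set. Property (L1) of Lemma~\ref{lem:HT-L1-L2} is precisely the hypothesis of Lemma~\ref{lem:finite-element-join-atom} (namely condition (H2), already recorded in Lemma~\ref{lem:H2-H4}), and $\mathcal{C}(X)$ is a complete atomistic ortholattice by Lemma~\ref{lem:C-is-atomistic}. Hence Lemma~\ref{lem:finite-element-join-atom}(i) shows that a finite element of $\mathcal{C}(X)$ is either the bottom element or of the form $a = \{e_1,\ldots,e_k\}\cc$ with $e_1,\ldots,e_k$ pairwise orthogonal and $k \geq 1$. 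To prove (H1) it therefore suffices to show that, for such an $a$ and any atom $\{f\}$ with $f \notin a$, the element $a$ is covered by $a \vee \{f\}$; the case $a = \bot$ is immediate, since then $a \vee \{f\} = \{f\}\cc = \{f\}$ by point-closedness, which is an atom.

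Next I would bring $b := a \vee \{f\}$ into a convenient form. By Lemma~\ref{lem:finite-element-join-atom}(ii), applied to the finite element $a$ and the atom $\{f\} \nleq a$, there is a $g$ with $g \perp e_1, \ldots, e_k$ and $a \vee \{f\} = a \vee \{g\}$. Then $\{e_1,\ldots,e_k,g\}$ is a $\perp$-set, and a routine computation with orthoclosures gives $b = \{e_1,\ldots,e_k,g\}\cc$; in particular $b$ is again a finite element, the biorthocomplement of $k+1$ mutually orthogonal elements.

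Now I would run the covering argument. Suppose $C \in \mathcal{C}(X)$ satisfies $a \subseteq C \subseteq b$ and $C \neq a$, and choose $h \in C \setminus a$. Applying Lemma~\ref{lem:finite-element-join-atom}(ii) once more, to $a$ and the atom $\{h\} \nleq a$, yields a $g'$ with $g' \perp e_1,\ldots,e_k$ and $a \vee \{h\} = a \vee \{g'\}$; since $a \subseteq C$ and $h \in C$ we have $g' \in a \vee \{h\} \subseteq C$. Thus $\{e_1,\ldots,e_k,g'\}$ is a $\perp$-set of cardinality $k+1$ (the elements being distinct by irreflexivity of $\perp$) contained in $C$, hence in $b = \{e_1,\ldots,e_k,g\}\cc$. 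By Lemma~\ref{lem:finite-height}, applied to the $k+1$ mutually orthogonal generators of $b$, a $\perp$-set of this maximal size inside $b$ has biorthocomplement equal to $b$; therefore $b = \{e_1,\ldots,e_k,g'\}\cc \subseteq C\cc = C \subseteq b$, so $C = b$. This shows that $a$ is covered by $a \vee \{f\}$, establishing (H1).

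The only genuine obstacle is that an intermediate element $C$ is a priori merely orthoclosed, not known to be a finite element, so the finiteness machinery cannot be invoked on $C$ directly. The remedy is to exhibit, inside $C$, a $\perp$-set of the maximal possible cardinality $k+1$ and then let the sharp clause of Lemma~\ref{lem:finite-height} ($A = Q\cc$ exactly when $\abs{Q}$ is maximal) convert this cardinality count into the identification $C\cc = C = b$. Everything else --- the description of the finite elements, the rewriting of joins as biorthocomplements, and the degenerate case $a = \bot$ --- is routine bookkeeping.
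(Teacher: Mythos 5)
Your proof is correct and follows essentially the same route as the paper's: reduce the atom to one orthogonal to the finite element via Lemma~\ref{lem:finite-element-join-atom}(ii), do the same for a witness of the intermediate orthoclosed set, and then use the maximal-cardinality clause of Lemma~\ref{lem:finite-height} to force the intermediate set to equal the join. The only (cosmetic) difference is that you conclude $b \subseteq C$ directly from $\{e_1,\ldots,e_k,g'\}\cc \subseteq C\cc = C$, whereas the paper sandwiches the two biorthocomplements; the handling of the degenerate case $a = \bot$ is a welcome extra detail.
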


\begin{proof}
Note first that, as ${\mathcal C}(X)$ is an atomistic ortholattice fulfilling (H2), Lemma~\ref{lem:finite-element-join-atom} applies.

Let $A \in {\mathcal C}(X)$ (as a lattice element) be finite, let $e \notin A$, and let $B \in {\mathcal C}(X)$ be such that $A \subset B \subseteq A \vee \{e\}$. By Lemma \ref{lem:finite-element-join-atom}(ii), there is an $f \perp A$ such that $A \vee \{e\} = A \vee \{f\}$. Furthermore, let $g \in B \setminus A$. Again by Lemma \ref{lem:finite-element-join-atom}(ii), there is an $h \perp A$ such that $A \vee \{g\} = A \vee \{h\}$. By Lemma \ref{lem:finite-element-join-atom}(i), there is a finite $\perp$-set $Q$ such that $A = Q\cc$. Then $(Q \cup \{h\})\cc = A \vee \{h\} = A \vee \{g\} \subseteq B \subseteq A \vee \{e\} = A \vee \{f\} = (Q \cup \{f\})\cc$. By Lemma \ref{lem:finite-height} it follows $(Q \cup \{h\})\cc = (Q \cup \{f\})\cc$ and hence $B = A \vee \{e\}$.
\end{proof}

We arrive at the main result of this section.

\begin{theorem} \label{thm:homogeneously-transitive}
Let $(X,\perp)$ be a homogeneously transitive orthoset of rank $\geq 4$. Then either $(X,\perp)$ is Boolean or else there is a Hermitian space $H$, possessing a unit vector in each one-dimensional subspace, such that $(X,\perp)$ is isomorphic to $(P(H), \perp)$.
\end{theorem}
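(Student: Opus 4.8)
The plan is to assemble the results already proved in this section and feed them into Theorem~\ref{thm:representation-by-Hermitian-spaces-adapted}, and then to identify the Hermitian space thus produced with one coming from $(P(H),\perp)$ in the concrete sense required. Fix a homogeneously transitive orthoset $(X,\perp)$ of rank $\geq 4$. If $\perp$ coincides with $\neq$, then $(X,\perp)$ is Boolean and we are in the first alternative, so assume from now on that $\perp \neq {\neq}$. Under this assumption all the lemmas of the section are available: by Lemma~\ref{lem:C-is-atomistic}, ${\mathcal C}(X)$ is a complete atomistic ortholattice whose atoms are the singletons; by Lemma~\ref{lem:H2-H4} it satisfies (H2), (H3) and (H4); and by Lemma~\ref{lem:finite-CP} it satisfies (H1). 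Hence Theorem~\ref{thm:representation-by-Hermitian-spaces-adapted} applies and yields a $\star$-sfield $K$ and a Hermitian space $H$ over $K$ together with an ortholattice isomorphism $\Phi \colon {\mathcal C}(X) \xrightarrow{\ \sim\ } {\mathcal C}(H)$.

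Next I would transfer this lattice isomorphism down to the level of orthosets. Since both ${\mathcal C}(X)$ and ${\mathcal C}(H)$ are complete atomistic ortholattices, $\Phi$ restricts to a bijection between their atom spaces that preserves and reflects orthogonality, i.e.\ an isomorphism of orthosets between $({\mathcal A}({\mathcal C}(X)),\perp)$ and $({\mathcal A}({\mathcal C}(H)),\perp)$. By Proposition~\ref{prop:ortholattices-OSs}, the map $e \mapsto \{e\}$ identifies $(X,\perp)$ with $({\mathcal A}({\mathcal C}(X)),\perp)$, and likewise the atoms of ${\mathcal C}(H)$ are exactly the one-dimensional subspaces, so $({\mathcal A}({\mathcal C}(H)),\perp)$ is naturally $(P(H),\perp)$ with the orthogonality relation induced by the Hermitian form. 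Composing these identifications gives an isomorphism of orthosets $(X,\perp) \cong (P(H),\perp)$, as required.

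It remains to arrange that $H$ has a unit vector in every one-dimensional subspace, i.e.\ that every $\lin u \in P(H)$ contains a vector $w$ with $\herm w w = 1$. This is where property (L1), equivalently (HT1), has to be used in its full strength rather than merely through (H2); the finite covering property alone does not force the existence of unit vectors. The point is that homogeneous transitivity asserts that for distinct $\lin u,\lin v$ there is an automorphism in $G_{\lin u\lin v}$ carrying $\lin u$ to $\lin v$, and via the isomorphism $P\colon \U(H,\lin{u,v}) \to G_{\lin u\lin v}$ of Theorem~\ref{thm:Wigner} (applicable since $\lin{u,v}^\perp$ has dimension $\geq 2$) this automorphism lifts to a unitary operator $U$ of $H$ fixing $\lin{u,v}^\perp$ pointwise and satisfying $\lin{U u} = \lin v$. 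Writing $U u = \lambda v$ for a suitable $\lambda \in K^{\bullet}$ and using that $U$ preserves the form, one gets $\herm v v \,\lambda\lambda^\star = \herm u u$; varying $v$ over a suitable orthogonal family and exploiting transitivity among all the one-dimensional subspaces shows that the set $\{\herm u u \colon u \in H^{\bullet}\}/(\text{scaling})$ is a single class, which one then normalises to contain $1$. Concretely: fix a reference vector $e$ with $\herm e e = \delta$; rescaling the form by $\delta^{-1}$ (a $\star$-Hermitian rescaling is permitted and changes neither $K$, nor $^\star$, nor $\perp$) we may assume $\herm e e = 1$, and then for any $\lin u$ the above unitary with $U e \in \lin u$ produces a unit vector in $\lin u$.

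\textbf{Expected main obstacle.} The routine part is the bookkeeping that glues Theorem~\ref{thm:representation-by-Hermitian-spaces-adapted} to Propositions~\ref{prop:ortholattices-OSs} and the atom-space identifications; this is just tracking isomorphisms. The genuinely substantive step is the last one: extracting from the transitivity hypothesis (HT1), reinterpreted through Wigner's theorem as the existence of unitary maps moving any one-dimensional subspace onto any other while fixing a large complement, the conclusion that all values $\herm u u$ lie in a single orbit under the norm-one multiplicative action, so that after one harmless rescaling of the form every line carries a unit vector. Care is needed here because $K$ is only a $\star$-sfield, so the relevant orbit is $\{\,\alpha\beta\beta^\star \colon \alpha \text{ fixed}, \beta \in K^{\bullet}\,\}$ rather than a multiplicative coset of squares, and one must check that the unitaries furnished by (HT1) indeed sweep out enough of $P(H)$ — which they do, since by homogeneity they are available for \emph{every} pair of distinct lines, not merely those in a fixed plane.
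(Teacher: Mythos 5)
Your proposal follows the paper's own proof essentially step for step: the Boolean case is split off, Lemmas \ref{lem:C-is-atomistic}, \ref{lem:H2-H4} and \ref{lem:finite-CP} feed into Theorem \ref{thm:representation-by-Hermitian-spaces-adapted}, the isomorphism is transported to the orthoset level via Proposition \ref{prop:ortholattices-OSs}, and the unit vectors are obtained exactly as in the paper by combining transitivity with Theorem \ref{thm:Wigner} and a rescaling of the form. The only imprecision is the parenthetical claim that rescaling by $\delta^{-1}$ (with $\delta = \herm{e}{e}$) leaves the involution $^\star$ unchanged --- in general one must pass to the conjugated involution $\alpha \mapsto \delta\,\alpha^\star\,\delta^{-1}$ --- but since the theorem only asserts the existence of \emph{some} Hermitian space inducing $\perp$, this does not affect the conclusion.
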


\begin{proof}
Assume that $(X,\perp)$ is not Boolean. Then, because of Lemmas \ref{lem:C-is-atomistic}, \ref{lem:H2-H4}, and \ref{lem:finite-CP}, Theorem \ref{thm:representation-by-Hermitian-spaces-adapted} is applicable: $(X,\perp)$ is isomorphic to $(P(H), \perp)$ for some Hermitian space $H$.

By Lemma \ref{lem:automorphisms-between-finite-elements} and Theorem \ref{thm:Wigner}, there is, for any vectors $u, v \in H\withoutzero$, a unitary operator mapping $\lin u$ to $\lin v$. It follows that if there is a unit vector in $H$, all one-dimensional subspaces contain a unit vector. To ensure the existence of a unit vector, we ``rescale'' the Hermitian form if necessary; see, e.g., \cite{Hol2}.
\end{proof}

We note that, although we cannot say much about the scalar $\star$-sfields of Hermitian spaces that represent homogeneously transitive orthosets according to Theorem \ref{thm:homogeneously-transitive}, it is also clear that not all $\star$-sfields are eligible.

\begin{remark}
Let $H$ be an at least two-dimensional Hermitian space over a $\star$-sfield $K$ such that each one-dimensional subspace contains a unit vector. Then $K$ has characteristic $0$; see \cite{Jon} or \cite[Lemma 25]{Vet1}.
\end{remark}

We conclude the section with some further elementary observations on homogeneously transitive orthosets that might be found interesting. In the proofs we could at some places make use of Theorem \ref{thm:homogeneously-transitive}, but we prefer to provide direct arguments.

Given an orthoset $(X,\perp)$, consider the $G_{ef}$-orbit of an element $e \in X$:
\[ G_{ef}(e) \;=\; \{ \phi(e) \colon \phi \in G_{ef} \}. \]
Then we have that $G_{ef}(e)$ and $\{e,f\}\c$ are orthogonal subsets of $X$. Moreover, $G_{ef}$ acts transitively on the former one and the latter consists of fixed points of $G_{ef}$. In general, $f$ need not be in the orbit of $e$; to ensure that $f \in G_{ef}(e)$ is the effect of condition (HT1). Under the assumption of homogenous transitivity, the pair $(G_{ef}(e),\{e,f\}\c)$ is actually a decomposition of $X$ into two constituents, in the sense that each of these sets is the orthocomplement of the other one.

\begin{lemma} \label{lem:HT-orbits}
Let $(X,\perp)$ be a homogeneously transitive orthoset and let $e \neq f$.
\begin{itemize}

\item[\rm (i)] $G_{ef}(e) = \{e,f\}\cc$. In particular, $G_{ef}$ acts transitively on $\{e,f\}\cc$.

\item[\rm (ii)] $\{e,f\}\c$ is the set of fixed points of $G_{ef}$.

\item[\rm (iii)] Let $e' \neq f'$. Then $G_{ef} = G_{e'f'}$ if and only if $G_{ef}(e) = G_{e'f'}(e')$ if and only if $\{e,f\}\c = \{e',f'\}\c$.

\end{itemize}
\end{lemma}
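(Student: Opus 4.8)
\textbf{Proof plan for Lemma \ref{lem:HT-orbits}.}

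The plan is to prove the three statements in a mutually supporting order, with (i) as the technical heart and (ii), (iii) following relatively cheaply. First I would establish the easy inclusions and fixed-point facts that hold with no homogeneity assumption: since every $\phi \in G_{ef}$ fixes each $x \perp e,f$, every element of $G_{ef}(e)$ is orthogonal to all of $\{e,f\}\c$, so $G_{ef}(e) \subseteq \{e,f\}\cc$; and clearly $\{e,f\}\c$ consists of fixed points of $G_{ef}$ by definition of $G_{ef} = \Aut(X,\{e,f\})$. That gives one half of (i) and one half of (ii).

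For the reverse inclusion in (i), I want to show $\{e,f\}\cc \subseteq G_{ef}(e)$, i.e.\ that $G_{ef}$ acts transitively on $\{e,f\}\cc$. The idea is to take an arbitrary $g \in \{e,f\}\cc$ with $g \neq e$ (the case $g = e$ being trivial and $g = f$ covered by (HT1)) and produce $\psi \in G_{ef}$ with $\psi(e) = g$. I would argue as in the proof of Lemma \ref{lem:HT-L1-L2}(L2): since $(X,\perp)$ is not Boolean there are distinct non-orthogonal elements, but more directly, I would use (HT2) to transport the pair $(e,f)$ to a pair $(e',g)$ and (HT1) inside $G_{e'g}$ to move $e'$ onto $g$, then conjugate back. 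The delicate point is making sure the conjugated automorphism actually lies in $G_{ef}$ and sends $e$ to $g$; here one needs $\{e,g\}\c = \{e,f\}\c$, which follows because $g \in \{e,f\}\cc$ gives $\{e,f\}\c \subseteq \{g\}\c$, hence $\{e,f\}\c \subseteq \{e,g\}\c$, and by Lemma \ref{lem:HT-L1-L2}(L1) applied appropriately (or by a symmetric orbit argument) the reverse inclusion too, so that $G_{eg} = G_{ef}$. With $G_{eg} = G_{ef}$ in hand, (HT1) for the pair $(e,g)$ yields $\phi \in G_{ef}$ with $\phi(e) = g$, completing transitivity and hence (i).

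Statement (ii) is then finished: if $x$ is a fixed point of $G_{ef}$ but $x \notin \{e,f\}\c$, then $x$ is not orthogonal to some element of $\{e,f\}\cc = G_{ef}(e)$; pushing $e$ by an automorphism in $G_{ef}$ and using that this automorphism preserves $\perp$, one derives that $x$ cannot be fixed unless already $x \perp e,f$ — more cleanly, since $G_{ef}$ acts transitively on $\{e,f\}\cc$ by (i) and this set has more than one element (as $e \neq f$ both lie in it), no element of $\{e,f\}\cc$ is fixed, and every element of $X$ lies in $\{e,f\}\cc \cup \{e,f\}\c$ by point-closedness considerations. For (iii), I would prove a cycle of implications. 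If $G_{ef} = G_{e'f'}$ then their orbits of a common description agree, but to match $G_{ef}(e)$ with $G_{e'f'}(e')$ one uses (ii): $\{e,f\}\c$ is the fixed-point set of $G_{ef} = G_{e'f'}$, which equals $\{e',f'\}\c$; then $\{e,f\}\cc = \{e',f'\}\cc$, and by (i) these are the two orbits, giving $G_{ef}(e) = G_{e'f'}(e')$. If $G_{ef}(e) = G_{e'f'}(e')$, then taking orthocomplements and using (i) gives $\{e,f\}\c = \{e',f'\}\c$. Finally, if $\{e,f\}\c = \{e',f'\}\c$, then $G_{ef} = \Aut(X,\{e,f\}) = \Aut(X, \{e,f\}\c) = \Aut(X,\{e',f'\}\c) = G_{e'f'}$, since $\Aut(X,A)$ depends only on the orthocomplement of $A$ (the condition ``$\phi$ fixes every $x \perp A$'' is literally about $A\c$).

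The main obstacle I anticipate is the reverse inclusion in (i) — specifically, verifying cleanly that whenever $g \in \{e,f\}\cc \setminus \{e\}$ one has $G_{eg} = G_{ef}$, so that (HT1) for $(e,g)$ can be invoked. This is exactly the kind of orbit/orthocomplement bookkeeping that (L1) and (L2) were set up to handle, and it should go through by the same conjugation technique used in Lemma \ref{lem:HT-L1-L2}, but it requires a little care to ensure one is not implicitly assuming what one wants to prove. Everything else is formal manipulation with orthocomplements, the definition of $G_{ef}$, and the two homogeneity axioms.
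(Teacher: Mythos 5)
Your treatment of (i) and (iii) follows the paper's route: for (i) you reduce the reverse inclusion to showing $G_{eg}=G_{ef}$ for $g\in\{e,f\}\cc\setminus\{e\}$ and then invoke (HT1) for the pair $(e,g)$, and (iii) is the same formal cycle of implications using (i) and (ii) plus the fact that $G_{ef}$ depends only on $\{e,f\}\c$. The one loose end in (i) is the inclusion $\{e,f\}\cc\subseteq\{e,g\}\cc$ (equivalently $\{e,g\}\c\subseteq\{e,f\}\c$): ``(L1) applied appropriately, or a symmetric orbit argument'' is not a proof. The paper obtains this exchange step from the finite covering property of ${\mathcal C}(X)$ established in Lemma \ref{lem:finite-CP}: one has $\{e\}\subsetneq\{e,g\}\cc\subseteq\{e\}\vee\{f\}$ and the latter covers $\{e\}$, forcing equality. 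You should invoke that lemma (or derive the same fact from (L1) together with Lemma \ref{lem:finite-height}); as written the step is only gestured at.

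The genuine gap is in (ii). Your ``clean'' argument rests on the claim that every element of $X$ lies in $\{e,f\}\cc\cup\{e,f\}\c$, and that is false: in $P(H)$ for a Hermitian space $H$ of dimension $\geq 3$, with $e,f$ spanning a plane $E$, any line contained in neither $E$ nor $E\c$ belongs to neither set. Your fallback sketch (``pushing $e$ by an automorphism\dots'') also does not dispose of a hypothetical fixed point $x$ with $x\notin\{e,f\}\c$ and $x\notin\{e,f\}\cc$; for such an $x$, applying a $\perp$-preserving $\phi\in G_{ef}$ with $\phi(e)=f$ to the fixed point $x$ only shows that $x\notperp e$ iff $x\notperp f$, which yields no contradiction. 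The paper's proof of (ii) requires real work at exactly this point: given such an $x$, it uses (H2) (via Lemma \ref{lem:finite-element-join-atom}) to find $\bar x\perp e,f$ with $\{e,f,x\}\cc=\{e,f,\bar x\}\cc$, then uses the modularity of ${\mathcal F}({\mathcal C}(X))$ (Lemma \ref{lem:complements-of-finite-elements}) to conclude that $\{x,\bar x\}\cc\cap\{e,f\}\cc$ is a single point $g$; since every $\phi\in G_{ef}$ fixes $x$ and $\bar x$ and maps $\{e,f\}\cc$ onto itself, $g$ is a fixed point lying inside $\{e,f\}\cc$, contradicting the transitivity proved in (i). Without an argument of this kind, (ii) --- and with it the first implication of your (iii), which leans on (ii) --- remains unproved.
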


\begin{proof}
Ad (i): Clearly, $G_{ef}(e) \subseteq \{e,f\}\cc$. To show the reverse inclusion, let $g \in \{e,f\}\cc$ be distinct from $e$. As ${\mathcal C}(X)$ has the finite covering property, we have $\{e,f\}\cc = \{e,g\}\cc$. Furthermore, $\{e,f\}\c = \{e,g\}\c$ implies $G_{ef} = G_{eg}$, and hence, by (HT1), we have $g \in G_{ef}(e)$.

Ad (ii): Clearly, any $x \in \{e,f\}\c$ is a fixed point of $G_{ef}$. Assume that $x \notin \{e,f\}\c$ is a further fixed point. By part (i), $x \notin \{e,f\}\cc$. By virtue of (H2), there is an $\bar x \perp e,f$ such that $\{e,f,x\}\cc = \{e,f,\bar x\}\cc$. By Lemma \ref{lem:complements-of-finite-elements}, ${\mathcal F}({\mathcal C}(X))$ is a modular sublattice of ${\mathcal C}(X)$ and we conclude that there is a $g$ such that $\{x,\bar x\}\cc \cap \{e,f\}\cc = \{g\}$. As any $\phi \in G_{ef}$ extends to an automorphism of ${\mathcal C}(X)$, it follows $\phi(\{g\}) = \phi(\{x, \bar x\}\cc) \cap \phi(\{e,f\}\cc) = \{x, \bar x\}\cc \cap \{e,f\}\cc = \{g\}$. But $g \in \{e,f\}\cc$ and by part~(i), $\{e,f\}\cc$ does not contain any fixed point.

Part (iii) follows from parts (i) and (ii).
\end{proof}

We note next that we may formulate the conditions for orthosets to arise from Hermitian spaces in a slightly modified way. This version avoids the need of excluding the case of Boolean orthosets.

\begin{proposition} \label{prop:HT3alt}
An orthoset $(X,\perp)$ is homogeneously transitive and non-Boolean if and only if, for any distinct $e, f \in X$, the following conditions hold:
\begin{itemize}[leftmargin=3.4em]

\item[\rm (HT1')] There are $\phi, \psi \in G_{ef}$ such that $\phi(e) = f$ and $\psi(e) \neq e, f$.

\item[\rm (HT2')] There is an $\bar e \perp e$ such that $G_{e\bar e} = G_{ef}$.

\end{itemize}
\end{proposition}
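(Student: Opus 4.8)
The plan is to prove Proposition \ref{prop:HT3alt} by establishing the equivalence in two directions, the forward implication being essentially a matter of collecting facts already proved, and the reverse implication requiring the real work.

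For the forward direction, assume $(X,\perp)$ is homogeneously transitive and non-Boolean. Condition (HT1) immediately yields a $\phi \in G_{ef}$ with $\phi(e) = f$. To obtain a $\psi \in G_{ef}$ with $\psi(e) \neq e, f$, I would invoke Lemma \ref{lem:HT-orbits}(i), which gives $G_{ef}(e) = \{e,f\}\cc$; since $(X,\perp)$ is non-Boolean, property (L2) of Lemma \ref{lem:HT-L1-L2} (more precisely (H3) for ${\mathcal C}(X)$, established via Lemma \ref{lem:H2-H4}) furnishes a third element $g \in \{e,f\}\cc$ distinct from $e$ and $f$, and then any $\psi \in G_{ef}$ with $\psi(e) = g$ works. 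This gives (HT1'). For (HT2'), apply property (L1) of Lemma \ref{lem:HT-L1-L2} to obtain $\bar e \perp e$ with $\{e,f\}\cc = \{e,\bar e\}\cc$; since ${\mathcal C}(X)$ is point-closed (Lemma \ref{lem:C-is-atomistic}), passing to orthocomplements gives $\{e,f\}\c = \{e,\bar e\}\c$, and Lemma \ref{lem:HT-orbits}(iii) then yields $G_{ef} = G_{e\bar e}$.

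For the reverse direction, assume (HT1') and (HT2') hold for all distinct $e, f$. Non-Booleanness is immediate: the element $\psi(e)$ from (HT1') is orthogonal to everything in $\{e,f\}\c$ but is not equal to $e$ or $f$, so the orthoclosure $\{e,f\}\cc$ contains at least three elements, hence $\perp$ cannot coincide with $\neq$. For (HT1), note that (HT1') directly contains the required $\phi$. The substantial task is (HT2): given distinct $e, f$ and distinct $e', f'$, produce an automorphism $\tau$ with $\tau(e) = e'$ and $G_{ef} = \tau^{-1} G_{e'f'} \tau$. The strategy is first to use (HT2') to replace both pairs by pairs of orthogonal elements: pick $\bar e \perp e$ with $G_{e\bar e} = G_{ef}$ and $\bar{e'} \perp e'$ with $G_{e'\bar{e'}} = G_{e'f'}$. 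It then suffices to find $\tau$ with $\tau(e) = e'$, $\tau(\bar e) = \bar{e'}$, and $\tau$ conjugating $G_{e\bar e}$ to $G_{e'\bar{e'}}$ — but an automorphism carrying the orthogonal pair $(e,\bar e)$ to the orthogonal pair $(e',\bar{e'})$ automatically conjugates the corresponding subgroups, since $G_{e\bar e}$ is defined purely in terms of $\{e,\bar e\}^\perp$, which $\tau$ maps to $\{e',\bar{e'}\}^\perp$. So everything reduces to: \emph{any two orthogonal pairs can be mapped to one another by an automorphism.} This in turn will follow from an iterated application of (HT1') in the spirit of Lemma \ref{lem:automorphisms-between-finite-elements}: use (HT1') to move $e$ to $e'$ by some $\phi_1$ fixing $\{e,e'\}^\perp$ pointwise (wait — (HT1') only gives a map in $G_{ef}$, fixing $\{e,f\}^\perp$, not $\{e,e'\}^\perp$), so more care is needed: one first moves $e$ to $e'$ using a suitable element of some $G_{ee''}$ and then corrects the second coordinate within the stabiliser of $e'$, exactly the two-step induction of Lemma \ref{lem:automorphisms-between-finite-elements}, whose proof used only (HT1) and (HT2) in the form already available here.

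The main obstacle is the reverse direction's (HT2), and within it the point that (HT1') and (HT2') as stated are a priori weaker than the original (HT1), (HT2): one must bootstrap from these local conditions the global homogeneity needed to run the Lemma \ref{lem:automorphisms-between-finite-elements}-style argument. The key insight making this work is that (HT2') lets one always normalise to orthogonal pairs, and for orthogonal pairs the conjugacy of the $G$-subgroups is forced by any automorphism matching the pairs — so one never needs the full strength of (HT2), only the transitivity on orthogonal pairs, which (HT1') delivers by induction. I would also need to double-check that the first-coordinate move in the induction can be taken inside $G_{ef}$ for an appropriate choice of second point, using (HT1') with $f$ replaced by whatever destination is convenient; this is the fiddly bookkeeping step but presents no conceptual difficulty.
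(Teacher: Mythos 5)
Your proposal is correct and follows essentially the same route as the paper: the forward direction by combining Lemma \ref{lem:HT-L1-L2} with Lemma \ref{lem:HT-orbits}, and the reverse direction by normalising both pairs to orthogonal pairs via (HT2') and then transporting one orthogonal pair to the other with the Lemma \ref{lem:automorphisms-between-finite-elements}-style induction, which indeed needs only (HT1). Your shortcut of using the group equality $G_{e\bar e}=G_{ef}$ from (HT2') directly, rather than first re-deriving $\{e,f\}\c=\{e,\bar e\}\c$ as the paper does, is a harmless streamlining of the same argument.
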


\begin{proof}
Let $(X,\perp)$ be homogeneously transitive and not Boolean. Let $e \neq f$. By Lemma \ref{lem:HT-L1-L2}, $\{e,f\}\cc$ contains at least three elements, among which is there is an $\bar e \perp e$. Hence (HT1') and (HT2') follow from Lemma \ref{lem:HT-orbits}.

Conversely, assume that the orthoset $(X,\perp)$ fulfils (HT1') and (HT2'). Clearly, (HT1) then holds. Furthermore, $(X,\perp)$ is not Boolean. Indeed, otherwise $G_{ef}$ would contain, for any distinct $e, f \in X$, just two elements, namely, the identity and the map interchanging $e$ and $f$, whereas (HT1') implies the existence of a third map.

We claim that $(X,\perp)$ fulfils condition (L1) in Lemma \ref{lem:HT-L1-L2}. Let $e \neq f$. By (HT2'), there is a $g \perp e$ such that $G_{ef} = G_{eg}$. We shall show that $\{e,f\}\c = \{e,g\}\c$. Indeed, let $x \perp e, f$. By (HT1'), there is $\phi \in G_{eg} = G_{ef}$ such that $\phi(e) = g$. Then $x = \phi(x) \perp \phi(e) = g$ and it follows $\{e,f\}\c \subseteq \{e,g\}\c$. We argue similarly, the roles of $f$ and $g$ being interchanged, to see that also $\{e,g\}\c \subseteq \{e,f\}\c$.

Let now $e \neq f$ and $e' \neq f'$. By (L1), there is an $\bar e \perp e$ and a $\bar{e'} \perp e'$ such that $\{e,f\}\cc = \{e,\bar e\}\cc$ and $\{e',f'\}\cc = \{e',\bar{e'}\}\cc$. By virtue of (HT1), Lemma \ref{lem:automorphisms-between-finite-elements} holds for $(X,\perp)$. Hence there is an automorphism $\tau$ such that $\tau(e) = e'$ and $\tau(\bar e) = \bar{e'}$. Then $\tau(\{e,f\}\c) = \{e',f'\}\c$. It follows $\tau^{-1} \phi \tau(x) = x$ for any $\phi \in G_{e'f'}$ and $x \perp e,f$, that is, $\tau^{-1} G_{e'f'} \tau \subseteq G_{ef}$. Similarly, we have $\tau G_{ef} \tau^{-1} \subseteq G_{e'f'}$ and we conclude $G_{ef} = \tau^{-1} G_{e'f'} \tau$. (HT2) is proved.
\end{proof}

We finally provide a further reformulation, which emphasises to some extent the role of the orbits. We might observe in this case a resemblence with the axioms of projective geometry in Definition \ref{def:projective-space}. Indeed, condition (HT1'') can be considered as similar to the requirement (PS1), according to which every line contains the two points by which it is spanned. Furthermore, condition (HT2'') may be seen as a weakened form of (PS2), according to which any two points lie on a unique line. Remarkably, (PS3) or some other variant of the Pasch axiom does not occur.

\begin{proposition}
An orthoset $(X,\perp)$ is homogeneously transitive and non-Boolean if and only if the following conditions hold:
\begin{itemize}[leftmargin=3.5em]

\item[\rm (HT1'')] For any distinct $e, f \in X$, $G_{ef}(e)$ contains at least three elements, among which are $f$ as well as some $g \perp e$.

\item[\rm (HT2'')] For any $e \in X$ and any $f, g \neq e$, if $g \in G_{ef}(e)$ then $f \in G_{eg}(e)$.

\end{itemize}
\end{proposition}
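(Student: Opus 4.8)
The plan is to establish the equivalence by routing through the already-proved characterisation in Proposition~\ref{prop:HT3alt}: it suffices to show that, for distinct $e,f$, the conjunction of (HT1'') and (HT2'') is equivalent to the conjunction of (HT1') and (HT2'). One direction is almost immediate. If $(X,\perp)$ is homogeneously transitive and non-Boolean, then Lemma~\ref{lem:HT-orbits}(i) gives $G_{ef}(e)=\{e,f\}\cc$, and by Lemma~\ref{lem:HT-L1-L2} this set contains at least three elements, one of which (call it $\bar e$) lies in $\{e,f\}\cc$ and satisfies $\bar e \perp e$; since $f\in\{e,f\}\cc$ as well, (HT1'') holds. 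For (HT2''): if $g\in G_{ef}(e)=\{e,f\}\cc$ with $g\neq e$, then by the finite covering property (available since ${\mathcal C}(X)$ satisfies (H1)) we get $\{e,g\}\cc=\{e,f\}\cc$, hence $\{e,g\}\c=\{e,f\}\c$, hence $G_{eg}=G_{ef}$, and therefore $f\in G_{ef}(e)=G_{eg}(e)$, which is (HT2''). So the forward implication is a clean consequence of the earlier structure theory.

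For the converse, assume (HT1'') and (HT2'') for all distinct $e,f$. First, (HT1) is immediate from the ``$f\in G_{ef}(e)$'' clause of (HT1''), and non-Booleanness follows exactly as in the proof of Proposition~\ref{prop:HT3alt}: the third element of $G_{ef}(e)$ guaranteed by (HT1'') cannot exist in a Boolean orthoset, where $G_{ef}=\{\id,(e\,f)\}$ has the two-element orbit $\{e,f\}$. Next I would recover (HT1'): (HT1'') already supplies a $\phi\in G_{ef}$ with $\phi(e)=f$, and it supplies a $g\in G_{ef}(e)$ with $g\neq e$ and $g\perp e$; since $g\perp e$ forces $g\neq f$ (as $f\notperp e$ would be needed — more carefully, $g\perp e$ while $f$ is below $\{e,f\}\cc$ but distinct; if $e\perp f$ then the third element works, if $e\notperp f$ then $g\neq f$ since $g\perp e$ but $f\notperp e$), we obtain $\psi\in G_{ef}$ with $\psi(e)=g\neq e,f$, which is (HT1'). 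The remaining task is (HT2'): find $\bar e\perp e$ with $G_{e\bar e}=G_{ef}$. Take $\bar e:=g\in G_{ef}(e)$ from (HT1''), so $\bar e\perp e$. By (HT2'') applied to the pair $(e,f)$ with the element $g=\bar e$, we get $f\in G_{e\bar e}(e)$. Conversely $\bar e\in G_{ef}(e)$ by construction. One must now promote these orbit containments to the equality $G_{ef}=G_{e\bar e}$.

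The main obstacle is precisely this last step: deducing $G_{ef}=G_{e\bar e}$ from the mutual orbit memberships $\bar e\in G_{ef}(e)$ and $f\in G_{e\bar e}(e)$, without yet having Lemma~\ref{lem:HT-orbits} or the Hermitian representation at our disposal (we are trying to \emph{prove} the hypothesis that yields them). The idea is to argue symmetrically. Suppose $x\perp e,f$; I want $x\perp e,\bar e$. Since $\bar e\in G_{ef}(e)$, pick $\phi\in G_{ef}$ with $\phi(e)=\bar e$; then $x=\phi(x)\perp\phi(e)=\bar e$, and $x\perp e$ trivially, so $x\in\{e,\bar e\}\c$. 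Hence $\{e,f\}\c\subseteq\{e,\bar e\}\c$, which gives $G_{e\bar e}\subseteq G_{ef}$. For the reverse, take $x\perp e,\bar e$; using $f\in G_{e\bar e}(e)$ from (HT2''), pick $\chi\in G_{e\bar e}$ with $\chi(e)=f$, so $x=\chi(x)\perp\chi(e)=f$, whence $\{e,\bar e\}\c\subseteq\{e,f\}\c$ and $G_{ef}\subseteq G_{e\bar e}$. Thus $\{e,f\}\c=\{e,\bar e\}\c$ and $G_{ef}=G_{e\bar e}$, establishing (HT2'). Together with (HT1'), Proposition~\ref{prop:HT3alt} then yields that $(X,\perp)$ is homogeneously transitive and non-Boolean, completing the equivalence. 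The only subtlety to watch is the bookkeeping in (HT1') when $e\perp f$ versus $e\notperp f$, and making sure the element $g$ extracted from (HT1'') is genuinely distinct from both $e$ and $f$; both cases are handled by the ``at least three elements'' clause.
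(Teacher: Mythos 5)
Your proposal is correct and follows essentially the same route as the paper: the forward direction is read off from Lemma \ref{lem:HT-L1-L2}, Lemma \ref{lem:finite-CP} and Lemma \ref{lem:HT-orbits}, and the converse reduces (HT1'')--(HT2'') to (HT1')--(HT2') and invokes Proposition \ref{prop:HT3alt}, with the key step being exactly the two orbit-to-orthocomplement inclusions $\{e,f\}\c\subseteq\{e,\bar e\}\c$ and $\{e,\bar e\}\c\subseteq\{e,f\}\c$ that yield $G_{ef}=G_{e\bar e}$. Your extra case analysis for (HT1') (handling the possibility $g=f$ when $e\perp f$ via the ``at least three elements'' clause) is a point the paper dismisses as obvious, and is handled correctly.
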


\begin{proof}
Let $(X,\perp)$ be homogeneously transitive and not Boolean. Again, for $e \neq f$, Lemma \ref{lem:HT-L1-L2} implies that $\{e,f\}\cc$ contains a third elements as well as an element orthogonal to $e$. Hence (HT1'') follows from Lemma \ref{lem:HT-orbits}. Furthermore, by Lemma~\ref{lem:finite-CP}, ${\mathcal C}(X)$ has the finite covering property. Hence (HT2'') follows from Lemma \ref{lem:HT-orbits} as well.

Conversely, assume that (HT1'') and (HT2'') hold. We will derive (HT1') and (HT2'), so that the Proposition will follow by Theorem \ref{prop:HT3alt}.

(HT1') is obvious. Let $e \neq f$. By (HT1''), there is an $\bar e \in G_{ef}(e)$ such that $\bar e \perp e$. Then $\{ e,f \}\c \subseteq \{ e,\bar e \}\c$. By (HT2''), also $f \in G_{e\bar e}(e)$ holds, which means that $\{ e,\bar e \}\c \subseteq \{ e,f \}\c$. From $\{ e,f \}\c = \{ e,\bar e \}\c$ it follows that $G_{ef} = G_{e\bar e}$. (HT2') follows.
\end{proof}

\section{Divisible transitivity}
\label{sec:divisible-transitivity}

We any pair $e, f$ of distinct elements of an orthoset we have associated the group $G_{ef}$ of automorphisms that keep the elements orthogonal to $e$ and $f$ fixed. Intuitively, we have viewed $G_{ef}$ as realising the transitions from $e$ into the direction $f$. The present section is based on the idea to add the requirement that these transitions may proceed in a continuous manner. To this end, we will certainly not deal with topologies, we rather propose a divisibility condition.

For distinct elements $e$ and $f$ of an orthoset, let us consider the following collection of automorphisms of $(X,\perp)$:
\begin{align*}
R_{ef} \;=\; & \{ \phi \in G_{ef} \colon \text{ for any $k \geq 1$ there is a $\psi \in G_{ef}$ such that $\phi = \psi^k$} \} \\
\;=\; & \{ \phi \in \Aut(X) \colon \text{ for any $k \geq 1$ there is a $\psi \in \Aut(X)$} \\
& \text{ such that $\psi(x) = x$ for all $x \perp e,f$ and $\psi^k = \phi$} \}.
\end{align*}
We shall require $R_{ef}$ to be a subgroup, subjected to similar conditions as $G_{ef}$ in case of homogeneous transitivity. Our above intuitive ideas furthermore motivate us to require $R_{ef}$ to be abelian: assuming that $\phi_0 \in R_{ef}$ is an automorphism mapping $e$ to $f$ and, for any $i \geq 0$, $\phi_{i+1}$ is a square root of $\phi_i$ in $R_{ef}$, we intend to view the subgroup generated by these maps, which is abelian, as ``dense'' in $R_{ef}$.

\begin{definition}
We call an orthoset $(X,\perp)$ {\it divisibly transitive} if, for any distinct $e, f \in X$, the following holds:
\begin{itemize}[leftmargin=3em]

\item[\rm (DT0)] $R_{ef}$ is an abelian subgroup of $\Aut(X)$.

\item[\rm (DT1)] There is a $\phi \in R_{ef}$ such that $\phi(e) = f$.

\item[\rm (DT2)] For any further distinct elements $e', f' \in X$, $R_{ef}$ is conjugate to $R_{e'f'}$ via an automorphism that maps $e$ to $e'$.

\end{itemize}

\end{definition}

\begin{example} \label{ex:real-Hilbert-space-3}
For a real Hilbert space $H$ of dimension $\geq 4$, $(P(H), \perp)$ is a divisibly transitive orthoset. Indeed, let $u, v \in H\withoutzero$ be linearly independent and recall from Example \ref{ex:real-Hilbert-space-2} the isomorphism $P \colon \O(H, \lin{ u, v}) \to G_{\lin u \lin v}$. As $R_{\lin u \lin v}$ consists of those elements of $G_{\lin u \lin v}$ that possess, for each $k \geq 1$, a $k$-th root in $G_{\lin u \lin v}$, the isomorphism restricts to $P \colon \SO(H, \lin{u,v}) \to R_{\lin u \lin v}$. That is, $R_{\lin u \lin v}$ corresponds to those operators which, restricted to $\lin{u,v}$, are (proper) rotations of the plane spanned by $u$ and $v$. {\rm (DT0)} and {\rm (DT1)} are hence clear and {\rm (DT2)} holds by a similar argument as in Example \ref{ex:real-Hilbert-space-2}.
\end{example}

We have the following, provisional representation for divisibly transitive orthosets.

\begin{lemma} \label{lem:DOS-hermitian-space}
Let $(X,\perp)$ be a divisibly transitive orthoset of rank $\geq 4$. Then there is Hermitian space $H$ over some $\star$-sfield $K$, possessing a unit vector in each one-dimensional subspace, such that $(X,\perp)$ is isomorphic to $(P(H), \perp)$.
\end{lemma}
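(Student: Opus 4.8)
The plan is to deduce Lemma~\ref{lem:DOS-hermitian-space} from Theorem~\ref{thm:homogeneously-transitive} by showing that a divisibly transitive orthoset of rank $\geq 4$ is in particular homogeneously transitive (and non-Boolean). The key observation is that the sets $R_{ef}$ are, under (DT0)--(DT2), subgroups of the groups $G_{ef}$, and that the conditions (DT1) and (DT2) transfer transitivity and homogeneity from $R_{ef}$ up to $G_{ef}$: if $R_{ef}$ already contains an automorphism sending $e$ to $f$, then so does the larger group $G_{ef}$, giving (HT1) at once. For (HT2), the point is that the conjugating automorphism $\tau$ provided by (DT2) satisfies $\tau^{-1} R_{e'f'} \tau = R_{ef}$ and $\tau(e) = e'$; since $R_{ef} \subseteq G_{ef}$ and conjugation by an automorphism carries $G_{ef}$ to $G_{\tau^{-1}(e)\,\tau^{-1}(f)}$, I need to check that $\tau$ also conjugates $G_{e'f'}$ precisely to $G_{ef}$, which amounts to verifying $\{\tau^{-1}(e'),\tau^{-1}(f')\}\c = \{e,f\}\c$.

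Concretely, the steps I would carry out are as follows. First, record that $R_{ef}$ is a subgroup of $G_{ef}$ by definition and by (DT0), so in particular $G_{ef}$ is nontrivial and indeed contains maps moving $e$ off itself — this already shows $(X,\perp)$ is non-Boolean, since in a Boolean orthoset $G_{ef}$ has only two elements and the element interchanging $e$ and $f$ has no square root, forcing $R_{ef}$ to be trivial, contradicting (DT1). Second, (HT1) is immediate from (DT1). Third, for (HT2), take the automorphism $\tau$ from (DT2) with $\tau(e')=e$ (adjusting the direction of conjugation to match the excerpt's convention) and $R_{ef} = \tau^{-1} R_{e'f'} \tau$; I would then argue that, because $(X,\perp)$ already satisfies (L1) and (L2) — which follow from the weaker hypotheses in the same way as in Lemma~\ref{lem:HT-L1-L2}, using only (DT1) and (DT2) in place of (HT1) and (HT2), together with the rank and non-Boolean assumptions — the orbit $R_{ef}(e)$ generates the closure $\{e,f\}\cc$, so that knowing how $\tau$ acts on the $R$-orbits pins down its action on the relevant orthocomplements, giving $\tau^{-1} G_{e'f'} \tau = G_{ef}$ and hence (HT2). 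Fourth, having established homogeneous transitivity and non-Booleanness, apply Theorem~\ref{thm:homogeneously-transitive} to obtain the Hermitian space $H$ with a unit vector in every one-dimensional subspace and $(X,\perp) \cong (P(H),\perp)$.

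The main obstacle I anticipate is step three: one must be careful that (DT2) gives a conjugation of the \emph{small} groups $R_{ef}$, whereas (HT2) demands a conjugation of the \emph{large} groups $G_{ef}$, and it is not a priori obvious that the same automorphism $\tau$ works for both. The resolution should be that $\tau$, being an automorphism of $(X,\perp)$, induces an automorphism of ${\mathcal C}(X)$, and that each $G_{ef}$ is determined by the closed set $\{e,f\}\c$ (equivalently by $\{e,f\}\cc$), so it suffices to show $\tau$ carries $\{e',f'\}\c$ onto $\{e,f\}\c$. That in turn should follow from the fact that $\tau$ carries the $R_{e'f'}$-orbit of $e'$ onto the $R_{ef}$-orbit of $e$, combined with the identity $R_{ef}(e)\cc = \{e,f\}\cc$, which itself needs (L1), (L2) and the finite covering property established for ${\mathcal C}(X)$ exactly as in Section~\ref{sec:homogeneous-transitivity} but now starting from the divisible-transitivity axioms. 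Once that bookkeeping is in place, the lemma is just an invocation of Theorem~\ref{thm:homogeneously-transitive}, so the real content lies in verifying the analogues of Lemmas~\ref{lem:HT-L1-L2} and~\ref{lem:HT-orbits} under the weakened hypotheses.
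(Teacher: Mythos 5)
Your proposal is correct and follows essentially the same route as the paper: the paper's proof simply notes that $(X,\perp)$ cannot be Boolean (since otherwise $R_{ef}=\{\id\}$, contradicting (DT1)) and that Lemmas~\ref{lem:HT-L1-L2}--\ref{lem:finite-CP} go through verbatim with (DT1), (DT2) and the inclusion $R_{ef}\subseteq G_{ef}$ in place of (HT1), (HT2), after which Theorem~\ref{thm:homogeneously-transitive} applies. Your extra packaging of this as ``divisible transitivity implies homogeneous transitivity'' is sound but adds nothing beyond the re-verification of those lemmas, which you correctly identify as the real content.
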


\begin{proof}
Note first that $(X,\perp)$ cannot be Boolean. Indeed, in this case $R_{ef}$, for any $e \neq f$, would consist of the identity alone, in contradiction to (DT1).

Lemmas \ref{lem:HT-L1-L2}--\ref{lem:finite-CP} and hence Theorem \ref{thm:homogeneously-transitive} hold also for divisibly transitive orthosets. To show this, we argue on the basis of (DT1) and (DT2) instead of (HT1) and (HT2), respectively.
\end{proof}

We note that Lemma \ref{lem:HT-orbits} possesses also a version for divisibly transitive orthosets.

\begin{lemma} \label{lem:DT-orbits}
Let $(X,\perp)$ be a divisibly transitive orthoset and let $e \neq f$.
\begin{itemize}

\item[\rm (i)] $R_{ef}(e) = \{e,f\}\cc$. In particular, $R_{ef}$ acts transitively on $\{e,f\}\cc$.

\item[\rm (ii)] $\{e,f\}\c$ is the set of fixed points of $R_{ef}$.

\item[\rm (iii)] Let $e' \neq f'$. Then $R_{ef} = R_{e'f'}$ if and only if $R_{ef}(e) = R_{e'f'}(e')$ if and only if $\{e,f\}\c = \{e',f'\}\c$.

\end{itemize}
\end{lemma}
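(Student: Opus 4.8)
The plan is to mimic the proof of Lemma \ref{lem:HT-orbits}, replacing $G_{ef}$ with $R_{ef}$ throughout and invoking (DT0)--(DT2) in place of (HT1)--(HT2), while taking care that the key inputs used there — point-closedness, the finite covering property, and modularity of $\mathcal F(\mathcal C(X))$ — remain available. These are available: by Lemma \ref{lem:DOS-hermitian-space} the orthoset $(X,\perp)$ is non-Boolean and isomorphic to $(P(H),\perp)$ for a Hermitian space $H$, so all of Lemmas \ref{lem:HT-L1-L2}--\ref{lem:finite-CP} apply, in particular $\mathcal C(X)$ is a complete atomistic ortholattice with the finite covering property and $\mathcal F(\mathcal C(X))$ is a modular sublattice.

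For part (i): the inclusion $R_{ef}(e) \subseteq \{e,f\}\cc$ is immediate, since every $\phi \in R_{ef} \subseteq G_{ef}$ fixes $\{e,f\}\c$ pointwise, hence maps $e$ into $\{e,f\}\cc = \{e,f\}\ccc$. For the reverse inclusion, take $g \in \{e,f\}\cc$ with $g \neq e$. By the finite covering property (Lemma \ref{lem:finite-CP}), $\{e,g\}\cc = \{e,f\}\cc$, and hence $\{e,g\}\c = \{e,f\}\c$, which forces $R_{eg} = R_{ef}$ because $R_{ef}$ is determined by the set of elements it must fix, namely $\{e,f\}\c$ — more carefully, $R_{ef}$ is defined inside $G_{ef} = \Aut(X,\{e,f\})$, and $\Aut(X,\{e,f\}) = \Aut(X,\{e,f\}\c)$ depends only on $\{e,f\}\c$, so $G_{ef} = G_{eg}$ and therefore $R_{ef} = R_{eg}$. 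Now (DT1) applied to the pair $e \neq g$ yields some $\phi \in R_{eg} = R_{ef}$ with $\phi(e) = g$, so $g \in R_{ef}(e)$. Transitivity of the $R_{ef}$-action on $\{e,f\}\cc$ follows since $R_{ef}$ is a group (by (DT0)) and any two elements of the orbit are connected through $e$.

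For part (ii): every $x \in \{e,f\}\c$ is fixed by all of $R_{ef} \subseteq G_{ef}$ by definition. For the converse, suppose $x \notin \{e,f\}\c$ were also fixed by every element of $R_{ef}$. By part (i), $x \notin \{e,f\}\cc$ (the orbit contains no fixed point other than — actually none, since $R_{ef}$ acts transitively on a set with $\geq 3$ elements by Lemma \ref{lem:HT-L1-L2}). Using (H2) there is $\bar x \perp e,f$ with $\{e,f,x\}\cc = \{e,f,\bar x\}\cc$, and by modularity of $\mathcal F(\mathcal C(X))$ (Lemma \ref{lem:complements-of-finite-elements}) the meet $\{x,\bar x\}\cc \cap \{e,f\}\cc$ is a single atom $\{g\}$. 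Every $\phi \in R_{ef}$ extends to an automorphism of $\mathcal C(X)$ fixing both $\{x,\bar x\}\cc$ (as $\phi$ fixes $x$ and fixes $\bar x \in \{e,f\}\c$) and $\{e,f\}\cc$, hence fixes $g$; but $g \in \{e,f\}\cc$, contradicting that the transitive action of $R_{ef}$ on this $\geq 3$-element set has no fixed point. Part (iii) then follows formally: the first equivalence is clear from the definitions together with part (i), and the second equivalence follows by combining parts (i) and (ii) — $R_{ef}$ and its orbit $\{e,f\}\cc$ and the fixed-point set $\{e,f\}\c$ mutually determine one another, exactly as in Lemma \ref{lem:HT-orbits}(iii).

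The only point requiring genuine care — and the main thing to verify rather than a routine copy — is that $R_{ef}$ really is a \emph{normal-enough} object for the argument in part (ii): one needs that each $\phi \in R_{ef}$, extended to $\mathcal C(X)$, fixes the relevant finite lattice elements. This is fine because such $\phi$ fixes $e$, $f$ is irrelevant (it need not be fixed), $\bar x$ and $x$ are fixed, and automorphisms of $\mathcal C(X)$ preserve meets; so the contradiction is obtained exactly as in the homogeneous case.
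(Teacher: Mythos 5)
Your proposal is correct and follows exactly the route the paper takes: the paper's own proof of this lemma simply says to repeat the argument of Lemma~\ref{lem:HT-orbits} verbatim with $R_{ef}$ in place of $G_{ef}$, and you have carried out precisely that adaptation, correctly noting that the needed ingredients (point-closedness, the finite covering property, modularity of ${\mathcal F}({\mathcal C}(X))$, and the fact that $G_{ef}$, hence $R_{ef}$, depends only on $\{e,f\}\c$) remain available in the divisibly transitive setting via Lemma~\ref{lem:DOS-hermitian-space}.
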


\begin{proof}
Again, we may follow literally the proof of Lemma \ref{lem:HT-orbits}, which contains the analogous statement on homogeneously transitive orthosets.
\end{proof}

Furthermore, we remark that we may formulate also the axioms of divisible transitivity in a slightly different way.

\begin{proposition}
An orthoset $(X,\perp)$ is divisibly transitive if and only if, for any distinct $e, f \in X$, {\rm (DT0)}, {\rm (DT1)}, and the following holds:
\begin{itemize}[leftmargin=3.4em]

\item[\rm (DT2')] There is an $\bar e \perp e$ such that $R_{e\bar e} = R_{ef}$.

\end{itemize}
\end{proposition}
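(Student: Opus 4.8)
The plan is to prove this as a straightforward reformulation of the definition of divisible transitivity, closely mirroring the structure of Proposition~\ref{prop:HT3alt} for the homogeneously transitive case. The forward direction is essentially trivial: if $(X,\perp)$ is divisibly transitive, then (DT0) and (DT1) are assumed outright, so we only need to produce, for each $e\neq f$, an $\bar e\perp e$ with $R_{e\bar e}=R_{ef}$. First I would invoke Lemma~\ref{lem:HT-L1-L2}, which (by Lemma~\ref{lem:DOS-hermitian-space}) also holds for divisibly transitive orthosets, to obtain from (L1) an $\bar e\perp e$ with $\{e,f\}\cc=\{e,\bar e\}\cc$, equivalently $\{e,f\}\c=\{e,\bar e\}\c$. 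Then Lemma~\ref{lem:DT-orbits}(iii) gives $R_{ef}=R_{e\bar e}$ immediately.

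For the converse, assume (DT0), (DT1), and (DT2') hold. (DT0) and (DT1) are among the axioms of divisible transitivity, so only (DT2) must be derived. The key intermediate step is to show condition (L1) of Lemma~\ref{lem:HT-L1-L2}: for distinct $e,f$, pick by (DT2') an $\bar e\perp e$ with $R_{e\bar e}=R_{ef}$, and show $\{e,f\}\c=\{e,\bar e\}\c$. One inclusion uses (DT1): there is $\phi\in R_{ef}=R_{e\bar e}$ with $\phi(e)=\bar e$, so if $x\perp e,f$ then $x=\phi(x)\perp\phi(e)=\bar e$, giving $\{e,f\}\c\subseteq\{e,\bar e\}\c$. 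For the reverse inclusion one needs a $\psi\in R_{ef}$ with $\psi(e)=f$; such a $\psi$ exists directly by (DT1), and then $x\perp e,\bar e$ implies $x=\psi(x)\perp\psi(e)=f$. Thus (L1) holds, hence $\{e,f\}\cc=\{e,\bar e\}\cc$.

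With (L1) established, the remaining argument runs exactly as in Proposition~\ref{prop:HT3alt}. Given $e\neq f$ and $e'\neq f'$, (L1) yields $\bar e\perp e$ and $\bar{e'}\perp e'$ with $\{e,f\}\cc=\{e,\bar e\}\cc$ and $\{e',f'\}\cc=\{e',\bar{e'}\}\cc$. Since (DT1) gives the hypothesis of Lemma~\ref{lem:automorphisms-between-finite-elements} (its proof uses only (HT1), which (DT1) supplies), that lemma provides an automorphism $\tau$ with $\tau(e)=e'$ and $\tau(\bar e)=\bar{e'}$; consequently $\tau(\{e,f\}\c)=\tau(\{e,\bar e\}\c)=\{e',\bar{e'}\}\c=\{e',f'\}\c$. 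Then for any $\phi\in R_{e'f'}$ and any $x\perp e,f$, one checks $\tau^{-1}\phi\tau(x)=x$, so $\tau^{-1}R_{e'f'}\tau\subseteq G_{ef}$; moreover conjugation by $\tau^{-1}$ preserves the divisibility property, so in fact $\tau^{-1}R_{e'f'}\tau\subseteq R_{ef}$, and symmetrically the reverse inclusion holds, giving $R_{ef}=\tau^{-1}R_{e'f'}\tau$ with $\tau(e)=e'$. This is precisely (DT2).

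The main point requiring care — the ``hard part'', such as it is — is verifying that the conjugating automorphism $\tau$ actually sends $R_{e'f'}$ into $R_{ef}$ and not merely into $G_{ef}$; this follows because $\psi\mapsto\tau^{-1}\psi\tau$ is a group isomorphism $\Aut(X)\to\Aut(X)$ carrying $G_{e'f'}$ onto $G_{ef}$, hence carrying $k$-th roots in $G_{e'f'}$ to $k$-th roots in $G_{ef}$, so it restricts to a bijection $R_{e'f'}\to R_{ef}$. Beyond that, everything is a transcription of the homogeneous case with $G$ replaced by $R$ and (HT$i$) by (DT$i$), using the $R$-versions of the orbit and automorphism lemmas that are already available (Lemmas~\ref{lem:DOS-hermitian-space} and~\ref{lem:DT-orbits}).
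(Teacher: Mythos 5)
Your proposal is correct and follows essentially the same route as the paper: the forward direction via condition (L1) of Lemma~\ref{lem:HT-L1-L2} (valid for divisibly transitive orthosets by the remark in Lemma~\ref{lem:DOS-hermitian-space}), and the converse by transcribing the second half of the proof of Proposition~\ref{prop:HT3alt} with $G$ replaced by $R$. Your explicit observation that conjugation by $\tau$ is a group isomorphism $G_{e'f'}\to G_{ef}$ and therefore carries divisible elements to divisible elements, so that $\tau^{-1}R_{e'f'}\tau=R_{ef}$ and not merely $\tau^{-1}R_{e'f'}\tau\subseteq G_{ef}$, is exactly the detail the paper leaves implicit in its phrase ``argue similarly''.
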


\begin{proof}
If $(X,\perp)$ is divisibly transitive, $(X,\perp)$ fulfills condition (L1) in Lemma \ref{lem:HT-L1-L2}, and (L1) implies (DT2').

Conversely, assume (DT0), (DT1), and (DT2'). To show (DT2), we may argue similarly to the second part of the proof of Proposition \ref{prop:HT3alt}.
\end{proof}

Our aim is to refine Lemma \ref{lem:DOS-hermitian-space} and to clarify which type of $\star$-sfields are suitable for the representation of divisibly transitive orthosets. For the remainder of this section, let us fix an at least $4$-dimensional Hermitian space $H$ over a $\star$-sfield $K$ such that $(P(H),\perp)$ is divisibly transitive and each $\lin u \in P(H)$ contains a unit vector.

Let $S$ be a subspace of $H$. We set
\[ \begin{split}
\R(H,S) \;=\; \{ U \in \U(H,S) \colon & \text{for each $k \geq 1$,} \\ & \text{there is a $V \in \U(H,S)$ such that $V^k = U$} \}.
\end{split} \]
Furthermore, we will say that a subgroup $\G$ of $\U(H)$ {\it acts transitively} on $P(S)$ if, for any $u, v \in S\withoutzero$, there is a $U \in \G$ such $\lin{U(u)} = \lin v$.

\begin{lemma} \label{lem:simple-rotation-group}
Let $u, v \in H$ be linearly independent.
\begin{itemize}

\item[\rm (i)] Then $P \colon \R(H, \lin{u,v}) \to R_{\lin u \lin v}$ is a group isomorphism.

\item[\rm (ii)] $\R(H, \lin{u,v})$ is an abelian subgroup of $\U(H, \lin{u,v})$ that acts transitively on $P(\lin{u,v})$.

\end{itemize}
\end{lemma}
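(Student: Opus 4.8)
The plan is to transfer the group-theoretic hypotheses on $R_{\lin u \lin v}$ through the Wigner correspondence of Theorem~\ref{thm:Wigner}. First I would set $S = \lin{u,v}$ and recall from the remarks after Theorem~\ref{thm:Wigner} that, since $S\c$ has dimension $\geq 2$, the map $P \colon \U(H,S) \to \Aut(P(H),P(S)) = G_{\lin u \lin v}$ is a group isomorphism. The key observation is that this isomorphism carries the ``divisibility'' structure across: for $U \in \U(H,S)$ and $k \geq 1$, one has $V^k = U$ for some $V \in \U(H,S)$ if and only if $P(V)^k = P(U)$ for some $P(V) \in G_{\lin u \lin v}$, because $P$ restricted to $\U(H,S)$ is a bijection onto $G_{\lin u \lin v}$ and a homomorphism, so $k$-th roots correspond to $k$-th roots. (One must observe here that a $k$-th root in $G_{\lin u \lin v}$ of an element of the form $P(U)$, $U \in \U(H,S)$, is again of the form $P(V)$ with $V \in \U(H,S)$; but this is automatic since $P \colon \U(H,S) \to G_{\lin u \lin v}$ is onto.) Hence $P$ maps $\R(H,S)$ onto $R_{\lin u \lin v}$, and being the restriction of an injective homomorphism, it is a group isomorphism. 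This proves part~(i).

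For part~(ii), I would simply read off the properties of $R_{\lin u \lin v}$ guaranteed by the standing hypothesis that $(P(H),\perp)$ is divisibly transitive, and transport them back along the isomorphism of part~(i). By (DT0), $R_{\lin u \lin v}$ is an abelian subgroup of $\Aut(P(H))$; since $P \colon \R(H,S) \to R_{\lin u \lin v}$ is an isomorphism, $\R(H,S)$ is abelian. It is a subgroup of $\U(H,S)$ by its very definition (closure under products and inverses is routine: if $V^k = U$ and $W^k = U'$ with all factors in $\U(H,S)$ and the group abelian, then $(VW)^k = UU'$, and similarly for inverses — and commutativity of $\R(H,S)$, just established, is exactly what makes this computation valid). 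Finally, transitivity on $P(S)$: by Lemma~\ref{lem:DT-orbits}(i), $R_{\lin u \lin v}$ acts transitively on $\{\lin u, \lin v\}\cc$; under the identification $(P(H),\perp) \cong (P(H),\perp)$ coming from the Hermitian form, $\{\lin u, \lin v\}\cc = P(\lin{u,v}) = P(S)$, so for any $w, w' \in S\withoutzero$ there is $\phi \in R_{\lin u \lin v}$ with $\phi(\lin w) = \lin{w'}$, and writing $\phi = P(V)$ with $V \in \R(H,S)$ gives $\lin{V(w)} = \lin{w'}$. This is precisely the asserted transitivity.

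The main obstacle, and the only point needing genuine care rather than bookkeeping, is the first one: verifying that the isomorphism $P \colon \U(H,S) \to G_{\lin u \lin v}$ genuinely identifies $\R(H,S)$ with $R_{\lin u \lin v}$ rather than merely sending one into the other. The subtlety is that the definition of $R_{ef}$ allows the $k$-th roots $\psi$ to be taken in $G_{ef}$, whereas $\R(H,S)$ takes them in $\U(H,S)$; these are reconciled precisely because $P$ is an isomorphism of $\U(H,S)$ onto all of $G_{\lin u \lin v}$ — a fact that hinges on $S\c$ being at least two-dimensional, which holds since $H$ has dimension $\geq 4$ and $S$ is two-dimensional. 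Once this correspondence of root structures is pinned down, everything else is a direct transport of the divisible-transitivity axioms through the isomorphism, together with one appeal to Lemma~\ref{lem:DT-orbits}(i).
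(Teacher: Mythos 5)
Your proposal is correct and follows essentially the same route as the paper: both transport the divisibility structure through the Wigner isomorphism $P \colon \U(H,\lin{u,v}) \to G_{\lin u \lin v}$ to identify $\R(H,\lin{u,v})$ with $R_{\lin u \lin v}$, and then read off abelianness from (DT0) and transitivity from divisible transitivity (the paper invokes (DT1) for arbitrary distinct points of $P(\lin{u,v})$, you invoke Lemma~\ref{lem:DT-orbits}(i); these are interchangeable). One small caveat: your parenthetical argument that $\R(H,S)$ is closed under products, via $(VW)^k = UU'$, is not valid as stated --- the $k$-th roots $V,W$ live in $\U(H,S)$, not in $\R(H,S)$, so the commutativity of $\R(H,S)$ does not license that computation (indeed, the set of divisible elements of a group need not be a subgroup, which is precisely why (DT0) is an axiom). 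This does not damage the proof, since the subgroup property follows at once from your part~(i): $\R(H,S)$ is the preimage under the isomorphism $P|_{\U(H,S)}$ of the subgroup $R_{\lin u \lin v}$, which is exactly how the paper argues.
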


\begin{proof}
By Theorem \ref{thm:Wigner}, $P \colon \U(H,\lin{u,v}) \to G_{\lin u \lin v}$ is a group isomorphism. Moreover, $R_{\lin u \lin v}$ consists of the divisible elements of $G_{\lin u \lin v}$ and hence of the maps $P(U)$ such that $U \in \U(H,\lin{u,v})$ and for each $k \geq 1$ there is some $V \in \U(H,\lin{u,v})$ such that $V^k = U$. Hence $R_{\lin u \lin v} = \{ P(U) \colon U \in \R(H,\lin{u,v}) \}$. That is, the isomorphism $P \colon \U(H,\lin{u,v}) \to G_{\lin u \lin v}$ restricts to a bijection $P \colon \R(H,\lin{u,v}) \to R_{\lin u \lin v}$. But by (DT0), $R_{\lin u \lin v}$ is an abelian subgroup of $G_{\lin u \lin v}$. It follows that $\R(H,\lin{u,v})$ is an abelian subgroup of $\U(H,\lin{u,v})$ and also $P \colon \R(H,\lin{u,v}) \to R_{\lin u \lin v}$ is a group isomorphism.

By (DT1), it further follows that there is a $U \in \R(H,\lin{u,v})$ such that $P(U)(\lin u) = \lin{U(u)} = \lin v$. As $\lin u, \lin v$ are arbitrary distinct elements of $P(\lin{u,v})$, we conclude that $\R(H,\lin{u,v})$ acts transitively on $P(\lin{u,v})$.
\end{proof}

\begin{lemma} \label{lem:K-is-field}
$H$ is a quadratic space.
\end{lemma}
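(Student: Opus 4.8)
The plan is to extract the commutativity of $K$ and the triviality of the involution $^\star$ directly from the abelian structure of $\R(H,\lin{u,v})$ guaranteed by Lemma~\ref{lem:simple-rotation-group}. First I would fix linearly independent $u,v \in H$ and, after rescaling, assume $\herm u u = \herm v v = 1$ and $\herm u v = 0$, so $\{u,v\}$ is an orthonormal basis of the two-dimensional subspace $E = \lin{u,v}$. Any $U \in \U(H,E)$ restricts to a unitary operator on $E$, so the group $\U(H,E)$ is isomorphic to the unitary group $\U(E)$ of the Hermitian plane $E$, and $\R(H,E)$ corresponds to the divisible part of $\U(E)$. Writing $U$ in the basis $\{u,v\}$ as a $2\times 2$ matrix over $K$, the Hermitian-form-preservation condition becomes the matrix identity $M^\star M = I$, where $M^\star$ is the conjugate transpose with respect to $^\star$. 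The first substantive step is to describe this matrix group explicitly and to exhibit, using transitivity (Lemma~\ref{lem:simple-rotation-group}(ii)), enough elements of $\R(H,E)$ that must pairwise commute.

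The key computation is then the following. Transitivity on $P(E)$ forces $\R(H,E)$ to contain, for each $\lambda \in K$, some element sending $\lin u$ to $\lin{u + \lambda v}$ (as $\lin{u+\lambda v}$ ranges over all one-dimensional subspaces of $E$ other than $\lin v$). Combined with divisibility and the constraint $M^\star M = I$, I would show that $\R(H,E)$ contains a one-parameter family $R_t$ of ``rotation-type'' matrices $\vector{\alpha & -\beta^\star \\ \beta & \alpha^\star}$ (with $\alpha\alpha^\star + \beta\beta^\star = 1$) that together act transitively. Now impose that any two such matrices commute. Multiplying out $R R'$ and $R' R$ and equating entries yields relations among the scalars $\alpha,\alpha',\beta,\beta'$; exploiting the freedom to choose these parameters (which transitivity provides in abundance), I would force first that $\beta^\star = \beta$ for the relevant $\beta$'s, hence that $^\star$ restricted to a generating set is the identity, and then — since those scalars generate $K$ as a ring, $H$ being irreducible of dimension $\geq 4$ so that $E$ is not isolated — that $^\star = \id$ on all of $K$. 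Commutativity of $K$ then follows because $^\star$ is an antiautomorphism equal to the identity: $\alpha\beta = (\alpha\beta)^\star = \beta^\star\alpha^\star = \beta\alpha$. Finally, triviality of $^\star$ turns the Hermitian form into a symmetric bilinear form, and the anisotropy already built into the definition of a Hermitian space, together with characteristic $\neq 2$ (which holds since each line has a unit vector, cf.\ the Remark), makes $H$ a quadratic space in the sense defined above.

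The main obstacle I anticipate is the bookkeeping in the commutator computation: one must be careful that the elements of $\R(H,E)$ one writes down genuinely lie in the \emph{divisible} subgroup, not merely in $\U(H,E)$, and that the parameters $\alpha,\beta$ one varies range over a large enough subset of $K$ to conclude $^\star = \id$ globally rather than just on a subfield. Transitivity (Lemma~\ref{lem:simple-rotation-group}(ii)) is exactly what is needed here — it guarantees that the ``angles'' realised by $\R(H,E)$ are not confined to a proper sub-division-ring — but turning ``acts transitively on $P(E)$'' into ``the entries $\beta$ occurring in $\R(H,E)$ generate $K$'' requires a short argument: given $\lambda \in K$, pick $U \in \R(H,E)$ with $\lin{U(u)} = \lin{u + \lambda v}$, read off that the first column of the matrix of $U$ is a $K$-multiple of $\vector{1 \\ \lambda}$, normalise via $M^\star M = I$, and conclude that $\lambda$ (up to a unit of $K$) appears among the entries. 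The rest is the routine, if slightly tedious, matrix algebra sketched above.
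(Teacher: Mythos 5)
Your overall strategy coincides with the paper's: fix a plane $F=\lin{u,v}$ with an orthonormal basis, represent $\U(H,F)$ by $2\times 2$ matrices over $K$, use the transitivity of $\R(H,F)$ on $P(F)$ to realise every scalar of $K$ as a ratio of entries of such matrices, and exploit the abelianness of $\R(H,F)$ to force the involution $^\star$ to be trivial, whence the commutativity of $K$ follows because $^\star$ is an antiautomorphism. The ingredients you list --- the rotation form $\begin{smm} \alpha & -\beta^\star \\ \beta & \alpha^\star \end{smm}$, and reading off $\lambda$ as a ratio of the entries of the first column of an operator sending $\lin u$ to $\lin{u+\lambda v}$ --- are exactly steps (a) and (c) of the paper's proof.

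There is, however, a genuine gap in your concluding step. You propose to show that $^\star$ fixes the entries occurring in $\R(H,F)$, observe that these entries generate $K$ as a ring, and deduce $^\star=\id$ on all of $K$. This inference is invalid: $^\star$ is an \emph{anti}automorphism, so being the identity on a generating set does not propagate to products. For $\alpha^\star=\alpha$ and $\beta^\star=\beta$ one only gets $(\alpha\beta)^\star=\beta\alpha$, and $\beta\alpha=\alpha\beta$ is precisely the commutativity you are trying to establish --- the argument is circular. Even the sharper fact that \emph{every} $\xi\in K$ is literally a ratio $\beta^{-1}\alpha$ of $^\star$-fixed entries gives only $\xi^\star=\alpha\beta^{-1}$, which need not equal $\beta^{-1}\alpha$. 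The paper closes this gap with an auxiliary step that your sketch omits: commutation with the element of $\R(H,F)$ sending $\lin u$ to $\lin v$ first yields a unit $\epsilon$ with $\xi^\star=\epsilon^\star\xi\epsilon$ for all $\xi$; commutation with the element sending $\lin u$ to $\lin{u+v}$ then shows that its entry $\gamma$ is central and that $\alpha^\star=\alpha$, $\beta^\star=\beta$; and finally $\alpha=\epsilon^\star\alpha\epsilon$ and $\beta=\epsilon^\star\beta\epsilon$ show that $\epsilon$ commutes with $\alpha$ and $\beta$, so that $\xi^\star=\epsilon^\star\beta^{-1}\alpha\epsilon=\beta^{-1}\alpha=\xi$. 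Some control of this kind over how $^\star$ interacts with \emph{products} of entries is indispensable. (Your appeal to the irreducibility of $H$ and to $E$ ``not being isolated'' is also beside the point: the entire argument takes place inside the single plane $F$.)
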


\begin{proof}
We have to show that the involution $^\star$ on $K$ is the identity. It will then follow that $K$ is commutative and the lemma will be proved.

Let $F$ be a two-dimensional subspace of $H$ and let $\{ u, v \}$ be an orthonormal basis of $F$. We denote the vectors of $F$ by their coordinates w.r.t.\ this basis, in particular we write $u = \begin{smm} 1 \\ 0 \end{smm}$ and $v = \begin{smm} 0 \\ 1 \end{smm}$. We will likewise identify the operators $U \in \U(H,F)$ with their restriction to $F$ and write them as $2 \times 2$-matrices. Then $U = \begin{smm} \alpha & \gamma \\ \beta & \delta \end{smm} \in \U(H,F)$ if and only if $\alpha \alpha^\star + \beta \beta^\star = \gamma \gamma^\star + \delta \delta^\star = 1$ and $\alpha \gamma^\star + \beta \delta^\star = 0$.

We proceed by showing a sequence of auxiliary statements. We will frequently use the fact that, by Lemma \ref{lem:simple-rotation-group}, $\R(H,F)$ is an abelian subgroup of $\U(H,F)$ acting transitively on $P(F)$.

(a) For any $\xi \in K$, there is a $\begin{smm} \alpha & \gamma \\ \beta & \delta \end{smm} \in \R(H,F)$ such that $\xi = \beta^{-1} \alpha$.

{\it Proof of (a):} Let $\begin{smm} \alpha & \gamma \\ \beta & \delta \end{smm} \in \R(H,F)$ be such that $\lin{\begin{smm} \alpha \\ \beta \end{smm}} = \lin{\begin{smm} \alpha & \gamma \\ \beta & \delta \end{smm} \begin{smm} 1 \\ 0 \end{smm}} = \lin{\begin{smm} \xi \\ 1 \end{smm}}$. Then $\beta \neq 0$ and $\xi = \beta^{-1} \alpha$, hence (a) follows.

(b) There is an $\epsilon \in U(K)$ such that $\xi^\star = \epsilon^\star \xi \epsilon$ for any $\xi \in K$.

{\it Proof of (b):} Let $U_1 \in \R(H,F)$ be such that $\lin{U_1 \vector{1 \\ 0}} = \lin{\vector{0 \\ 1}}$. Then $U_1 = \begin{smm} 0 & \epsilon_1 \\ \epsilon_2 & 0 \end{smm}$ for some $\epsilon_1, \epsilon_2 \in U(K)$.

Let $\xi \in K$. By (a), there is a $U = \begin{smm} \alpha & \gamma \\ \beta & \delta \end{smm} \in \R(H,F)$ such that $\xi = \beta^{-1} \alpha$.  From
\[ \begin{split}
0 & \;=\; \herm{\vector{1 \\ 0}}{\vector{0 \\ 1}} \;=\; \herm{U_1 U \vector{1 \\ 0}}{U_1 U \vector{0 \\ 1}} \;=\; \herm{U_1 U \vector{1 \\ 0}}{U U_1 \vector{0 \\ 1}} \\
& \;=\; \herm{\vector{\beta \epsilon_1 \\ \alpha \epsilon_2}}{\vector{\epsilon_1 \alpha \\ \epsilon_1 \beta}}
\;=\; \beta \epsilon_1 \alpha^\star \epsilon_1^\star + \alpha \epsilon_2 \beta^\star \epsilon_1^\star,
\end{split} \]
we conclude
$(\beta^{-1} \alpha)^\star = -\epsilon_1^\star \beta^{-1} \alpha \epsilon_2$, that is, $\xi^\star = -\epsilon_1^\star \xi \epsilon_2$.

From the case $\xi = 1$ we see that $\epsilon_2 = -\epsilon_1$. Setting $\epsilon = -\epsilon_1 = \epsilon_2$, we infer (b).

(c) Any operator in $\R(H,F)$ is of the form $\begin{smm} \alpha & -\beta^\star \\ \beta & \alpha^\star \end{smm}$ for some $\alpha, \beta \in K$ such that $\alpha \alpha^\star + \beta \beta^\star = 1$.

{\it Proof of (c):} Let $U = \begin{smm} \alpha & \gamma \\ \beta & \delta \end{smm} \in \R(H,F)$ and let $U_1 = \begin{smm} 0 & -\epsilon \\ \epsilon & 0 \end{smm}$ be as in the proof of~(b). Then
\[ \vector{\epsilon \gamma \\ \epsilon \delta}
\;=\; U U_1 \vector{1 \\ 0}
\;=\; U_1 U \vector{1 \\ 0}
\;=\; \vector{-\beta \epsilon \\ \alpha \epsilon} \]
and hence $\gamma = - \epsilon^\star \beta \epsilon = -\beta^\star$ and $\delta = \epsilon^\star \alpha \epsilon = \alpha^\star$. This shows (c).

(d) $\xi^\star = \xi$ for any $\xi \in K$.

{\it Proof of (d):} Let $U_2 \in \R(H,F)$ be such that $\lin{U_2(\vector{1 \\ 0})} = \lin{\vector{1 \\ 1}}$. Then there is a $\gamma \in K$ such that $U_2 = \begin{smm} \gamma & -\gamma^\star \\ \gamma & \gamma^\star \end{smm}$. Clearly, $\gamma \neq 0$.

Let $\xi \in K$. By (a) and (c), there is a $U = \begin{smm} \alpha & -\beta^\star \\ \beta & \alpha^\star \end{smm} \in \R(H,F)$ such that $\xi = \beta^{-1} \alpha$. From $U_2 \, U = U \, U_2$ and (c), we get
\begin{equation} \label{fml:lem:K-is-field} \begin{split}
& \alpha \gamma - \beta \gamma^\star \;=\;
  \gamma \alpha - \gamma \beta^\star \;=\;
  \gamma \alpha - \gamma^\star \beta \;=\;
  \alpha \gamma - \beta^\star \gamma, \\
& \alpha \gamma + \beta \gamma^\star \;=\;
  \gamma \alpha^\star + \gamma \beta \;=\;
\gamma \alpha + \gamma^\star \beta \;=\; 
  \alpha^\star \gamma + \beta \gamma.
\end{split} \end{equation}
We conclude $2 \alpha \gamma = 2 \gamma \alpha$ and $2 \beta \gamma^\star = 2 \gamma^\star \beta$. Hence $\gamma$ commutes with $\alpha$ and, because $2\gamma^\star = \gamma^{-1}$, also with $\beta$. Hence $\gamma$ commutes with $\xi$ and we conclude that $\gamma \in Z(K)$. From (b), it further follows that $\gamma^\star = \gamma$. It is now clear from (\ref{fml:lem:K-is-field}) that $\alpha = \alpha^\star$ and $\beta = \beta^\star$.

From (b), it follows $\alpha = \alpha^\star = \epsilon^\star \alpha \epsilon$, so that we have $\alpha \epsilon = \epsilon \alpha$. Similarly we see that $\beta \epsilon^\star = \epsilon^\star \beta$. Hence $\xi^\star = (\beta^{-1} \alpha)^\star = \epsilon^\star \beta^{-1} \alpha \epsilon = \beta^{-1} \alpha = \xi$ and the proof of (d) is complete.
\end{proof}

From the proof of the previous lemma, it is obvious how to characterise the groups $R_{\lin u \lin v}$, where $u, v \in H$ are linearly independent. We insert the following lemma for later use.

\begin{lemma} \label{lem:RHF-is-SOHF}
Let $F$ be a two-dimensional subspace of $H$. Then $\R(H,F) = \SO(H,F)$.
\end{lemma}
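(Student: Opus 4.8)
The plan is to identify $\R(H,F)$ as a subgroup of $\O(H,F)$ and then show it is contained in $\SO(H,F)$, while the reverse inclusion has essentially already been obtained. First I would recall that by Lemma~\ref{lem:K-is-field}, $H$ is now known to be a quadratic space, so $\U(H,F) = \O(H,F)$ and $\R(H,F)$ is a subgroup of $\O(H,F)$ by Lemma~\ref{lem:simple-rotation-group}(ii). By part (c) of the proof of Lemma~\ref{lem:K-is-field}, every $U \in \R(H,F)$, restricted to $F$ and written in the orthonormal basis $\{u,v\}$, has the matrix form $\begin{smm} \alpha & -\beta \\ \beta & \alpha \end{smm}$ with $\alpha^2 + \beta^2 = 1$ (the involution being trivial now). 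Such a matrix has determinant $\alpha^2 + \beta^2 = 1$, and since $U$ is the identity on $F\c$, this shows $U \in \SO(H,F)$. Hence $\R(H,F) \subseteq \SO(H,F)$.

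For the reverse inclusion, I would argue that $\SO(H,F) \subseteq \R(H,F)$, i.e.\ that every simple rotation is divisible in $\O(H,F)$. Here the natural route is to use the transitivity established in Lemma~\ref{lem:simple-rotation-group}(ii): $\R(H,F)$ is an abelian subgroup acting transitively on $P(F)$. An abelian subgroup of $\SO(H,F) \cong \SO_2(K)$ (the group of matrices $\begin{smm} \alpha & -\beta \\ \beta & \alpha \end{smm}$, $\alpha^2+\beta^2=1$, which is already abelian) that acts transitively on the projective line $P(F)$ must in fact be all of $\SO(H,F)$: given any $W = \begin{smm} \alpha & -\beta \\ \beta & \alpha \end{smm} \in \SO(H,F)$, pick $U \in \R(H,F)$ with $\lin{U(u)} = \lin{W(u)}$; then $W^{-1}U$ fixes $\lin u \in P(F)$, so $W^{-1}U = \pm W'$ for a rotation $W'$ fixing $\lin u$, and a rotation of the plane fixing a one-dimensional subspace is $\pm I$. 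Thus $W^{-1}U = \pm I$, i.e.\ $U = \pm W$. Since $-I \in \R(H,F)$ (it is the square of the quarter-turn $U_1$ from the proof of Lemma~\ref{lem:K-is-field}, or simply divisible because $\R(H,F)$ acts transitively hence contains an element mapping $\lin u$ to itself nontrivially), we get $W \in \R(H,F)$.

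I expect the main obstacle to be making the reverse inclusion fully rigorous without circular reasoning: one must be careful that "a rotation of the plane fixing a one-dimensional subspace is $\pm I$" genuinely holds over the field $K$ in question (it does, since $\begin{smm} \alpha & -\beta \\ \beta & \alpha \end{smm}$ fixing $\lin{u}$ forces $\beta = 0$ and then $\alpha^2 = 1$, so $\alpha = \pm 1$ as $K$ has characteristic $\neq 2$), and that $-I$ really lies in $\R(H,F)$. The cleanest way to secure the latter is to note that $\R(H,F)$ is a subgroup containing, by transitivity, some $U$ with $\lin{U(u)} = \lin u$, $U \neq I$; such $U$ must be $-I$. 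Alternatively, and perhaps more transparently, observe that the map $\R(H,F) \to \{\,\lin w : w \in F\withoutzero\,\}$, $U \mapsto \lin{U(u)}$, is a surjective group homomorphism (from an abelian group) onto $P(F)$ whose kernel is $\{\pm I\} \cap \R(H,F)$; comparing with the corresponding description of $\SO(H,F)$ forces $\R(H,F) = \SO(H,F)$. Either way the argument is short once the structure of $\SO$ of a $2$-dimensional quadratic space is laid out, so the whole proof should be only a few lines.
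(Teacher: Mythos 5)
Your proof is correct and follows essentially the same route as the paper: the inclusion $\R(H,F)\subseteq\SO(H,F)$ read off from claim (c) in the proof of Lemma~\ref{lem:K-is-field}, then transitivity of $\R(H,F)$ on $P(F)$ to match a given element of $\SO(H,F)$ up to sign, and finally $-I\in\R(H,F)$ as the square of the quarter-turn $U_1$. The only caveat is your parenthetical alternative justification for $-I\in\R(H,F)$ --- transitivity alone does not produce a nontrivial element of the stabiliser of $\lin u$ --- but your primary justification via $U_1^2=-I$ is exactly the paper's and suffices.
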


\begin{proof}
We have seen in the proof of Lemma \ref{lem:K-is-field} that $\R(H,F) \subseteq \SO(H,F)$.

To show the reverse inclusion, let $U \in \SO(H,F)$. Let us fix again an orthonormal basis $\{ u, v \}$ of $F$ and let us identify any operator in $\O(H,F)$ with the $2 \times 2$-matrix representing its restriction to $F$. Then $U = \begin{smm} \alpha & -\beta \\ \beta & \alpha \end{smm}$ for some $\alpha, \beta \in K$ such that $\alpha^2 + \beta^2 = 1$.

By Lemma \ref{lem:simple-rotation-group}, there is a $V \in \R(H,F)$ such that $P(V)(\lin{\vector{1 \\ 0}}) = \lin{\vector{\alpha \\ \beta}}$. Since $U(K) = \{ -1, 1 \}$, we have that $V = \begin{smm} \alpha & -\beta \\ \beta & \alpha \end{smm} = U$ or $V = \begin{smm} -\alpha & \beta \\ -\beta & -\alpha \end{smm} = -U$. We have moreover seen in the proof of Lemma \ref{lem:K-is-field} that, for some $\epsilon \in U(K)$, $\R(H,F)$ contains $\begin{smm} 0 & -\epsilon \\ \epsilon & 0 \end{smm}$ and hence also its square $\begin{smm} -\epsilon^2 & 0 \\ 0 & -\epsilon^2 \end{smm} = - \begin{smm} 1 & 0 \\ 0 & 1 \end{smm}$. We conclude that $U \in \R(H,F)$.
\end{proof}

We shall finally establish that $K$ is a formally real field. For further information on ordering on fields we refer the reader to \cite[\S 1]{Pre}.

Note that, by the next lemma, $K$ is a Pythagorean field.

\begin{lemma} \label{lem:K-is-Pythagorean}
For any $\alpha, \beta \in K$ there is a $\gamma \in K$ such that $\gamma^2 = \alpha^2 + \beta^2$. If in this case $\gamma = 0$, then $\alpha = \beta = 0$.
\end{lemma}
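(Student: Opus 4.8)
The plan is to read off the identity $\gamma^2 = \alpha^2 + \beta^2$ directly from the fact, already established, that the group $\R(H,F) = \SO(H,F)$ of simple rotations of a two-dimensional subspace $F$ acts transitively on $P(F)$. First I would fix a two-dimensional subspace $F$ of $H$, which exists because $H$ has dimension $\geq 4$. By Lemma~\ref{lem:orthogonality-on-linear-space}(i) $F$ has an orthogonal basis, and since every one-dimensional subspace of $H$ contains a unit vector we may rescale it to an orthonormal basis $\{u,v\}$. As in the proof of Lemma~\ref{lem:K-is-field}, I identify each vector of $F$ with its coordinate column with respect to $\{u,v\}$ and each element of $\O(H,F)$ with the $2 \times 2$ matrix representing its restriction to $F$; since the involution on $K$ is the identity (Lemma~\ref{lem:K-is-field}), the Hermitian form restricted to $F$ is $\herm{\vector{x_1 \\ x_2}}{\vector{y_1 \\ y_2}} = x_1 y_1 + x_2 y_2$. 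By Lemmas~\ref{lem:simple-rotation-group} and~\ref{lem:RHF-is-SOHF}, $\R(H,F) = \SO(H,F)$, this group acts transitively on $P(F)$, and each of its elements has the form $\vector{a & -b \\ b & a}$ with $a^2 + b^2 = 1$.

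Now let $\alpha, \beta \in K$. If $\alpha = \beta = 0$, then $\gamma = 0$ does the job. Otherwise $w = \vector{\alpha \\ \beta}$ is a non-zero vector of $F$, so $\lin w \in P(F)$, and by transitivity there is a $V = \vector{a & -b \\ b & a} \in \SO(H,F)$ with $\lin{V(\vector{1 \\ 0})} = \lin{\vector{a \\ b}} = \lin w$. Hence $\vector{a \\ b} = \lambda \vector{\alpha \\ \beta}$ for some $\lambda \in K \setminus \{0\}$, so that $1 = a^2 + b^2 = \lambda^2 (\alpha^2 + \beta^2)$ and therefore $\alpha^2 + \beta^2 = (\lambda^{-1})^2$; taking $\gamma = \lambda^{-1}$ proves the first assertion. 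For the second assertion, suppose $\gamma = 0$, that is, $\alpha^2 + \beta^2 = 0$. Then the vector $w = \vector{\alpha \\ \beta}$ of $F$ satisfies $\herm w w = \alpha^2 + \beta^2 = 0$, and since $H$, hence also $F$, is anisotropic, it follows that $w = 0$, i.e.\ $\alpha = \beta = 0$.

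I do not expect any step to present a genuine difficulty: the substantial work has already been done in Lemmas~\ref{lem:K-is-field}, \ref{lem:simple-rotation-group} and~\ref{lem:RHF-is-SOHF}. The only point requiring a little care is the reduction to a two-dimensional subspace $F$ carrying an orthonormal basis, which is precisely where the hypothesis that every one-dimensional subspace of $H$ contains a unit vector is used.
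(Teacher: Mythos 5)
Your proof is correct, but it takes a detour that the paper avoids. The core computation is the same in both arguments: one produces a unit vector $\lambda(\alpha u + \beta v)$ on the line spanned by $\alpha u + \beta v$ and reads off $\lambda^2(\alpha^2+\beta^2)=1$, whence $\gamma=\lambda^{-1}$. The difference lies in how that unit vector is obtained. The paper simply invokes the standing hypothesis that \emph{every} one-dimensional subspace of $H$ contains a unit vector, applied to $\lin{\alpha u + \beta v}$; no rotation group is needed beyond the choice of the orthonormal pair $u,v$. You instead manufacture the unit vector by transporting $u$ along a rotation $V=\begin{smm} a & -b \\ b & a\end{smm}\in\SO(H,F)$ with $\lin{V(u)}=\lin{\alpha u+\beta v}$, which requires Lemmas \ref{lem:simple-rotation-group}, \ref{lem:K-is-field} and \ref{lem:RHF-is-SOHF} (in fact \ref{lem:RHF-is-SOHF} is not needed: transitivity of $\R(H,F)$ on $P(F)$ together with the matrix form established inside the proof of Lemma \ref{lem:K-is-field} already suffices). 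What your route buys is a reminder that the Pythagorean property is forced by the transitive abelian rotation group alone; what it costs is reliance on material extracted from the \emph{proof} of Lemma \ref{lem:K-is-field} rather than its statement. Your handling of the second assertion via anisotropy coincides with the paper's.
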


\begin{proof}
Let $\alpha, \beta \in K$ and assume that not both of them are equal to $0$. Let $u, v$ be orthogonal unit vectors of $H$. Then $\alpha u + \beta v$ is a non-zero vector and by the anisotropy of the Hermitian form we have $\alpha^2 + \beta^2 = \herm{\alpha u + \beta v}{\alpha u + \beta v} \neq 0$.

Moreover, $\lin{\alpha u + \beta v}$ contains a unit vector. Hence there is a $\tilde\gamma \in K \setminus \{0\}$ such that $\herm{\tilde\gamma(\alpha u + \beta v)}{\tilde\gamma(\alpha u + \beta v)} = 1$ and hence $\alpha^2 + \beta^2 = \big(\frac{1}{\tilde\gamma}\big)^2$.
\end{proof}

\begin{lemma} \label{lem:K-is-ordered}
$K$ is formally real. $K$ being equipped with any order, the Hermitian form on $H$ is positive definite.
\end{lemma}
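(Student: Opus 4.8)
The plan is to read off both assertions from the anisotropy of $\herm{\cdot}{\cdot}$, the Pythagorean property of $K$ (Lemma~\ref{lem:K-is-Pythagorean}), and the standing assumption that every one-dimensional subspace of $H$ contains a unit vector. Recall that by Lemma~\ref{lem:K-is-field} the involution on $K$ is the identity, so $\herm{\cdot}{\cdot}$ is a quadratic form and each value $\herm{x}{x}$ lies in $K$; moreover, since the rank of $H$ is $\geq 4$ there exist orthogonal unit vectors, and any two such are linearly independent (if $w = cu$ with $c \neq 0$, then $0 = \herm{u}{w} = \herm{u}{u}\,c$ forces $\herm{u}{u} = 0$, contradicting anisotropy).

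For formal reality I would first show that $-1$ is not a square in $K$: fixing orthogonal unit vectors $u, v$, the vector $u + \alpha v$ is nonzero for every $\alpha \in K$, so anisotropy yields $1 + \alpha^2 = \herm{u + \alpha v}{u + \alpha v} \neq 0$. Then, invoking Lemma~\ref{lem:K-is-Pythagorean}, any finite sum of squares in $K$ collapses, by induction, to a single square; hence $-1$ is not even a sum of squares, i.e.\ $K$ is formally real. By the Artin--Schreier theorem, $K$ therefore admits at least one field order.

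For the second claim, fix any order on $K$ and let $x \in H$ be nonzero; I want $\herm{x}{x} > 0$. The key observation is that $\lin x$ contains a unit vector, say $\lambda x$ with $\lambda \in K \setminus \{0\}$; since the involution is trivial this means $\lambda^2\,\herm{x}{x} = 1$, whence $\herm{x}{x} = (\lambda^{-1})^2$. A nonzero square is strictly positive in every order of $K$, so $\herm{x}{x} > 0$, and the form is positive definite. I do not anticipate a genuine obstacle; the one point that must not be glossed over is that formal reality requires ``$-1$ is not a sum of squares'' and not merely ``$-1$ is not a square'', which is precisely where the Pythagorean property is needed, while positive-definiteness becomes almost immediate once one notes that every value of the form is a square because each line carries a unit vector.
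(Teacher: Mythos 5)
Your proof is correct and follows essentially the same route as the paper: formal reality is deduced from the Pythagorean property of Lemma~\ref{lem:K-is-Pythagorean} (you phrase it as ``$-1$ is not a sum of squares'' after collapsing sums of squares to a single square, while the paper verifies that no nontrivial sum of squares vanishes and cites Prestel; these are standard equivalent characterisations), and positive definiteness comes from rescaling by the unit vector on each line so that $\herm{x}{x}$ is a nonzero square. No gaps.
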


\begin{proof}
By Lemma \ref{lem:K-is-Pythagorean}, $\alpha_1^2 + \ldots + \alpha_k^2 = 0$, where $k \geq 1$ and $\alpha_1, \ldots, \alpha_k \in K$, implies that $\alpha_1 = \ldots \alpha_k = 0$. By \cite[Theorem~(1.8)]{Pre}, it follows that $K$ is formally real.

Moreover, assume $K$ to be equipped with an order and let $u \in H\withoutzero$. Let $v \in H$ be a unit vector in $\lin u$. Then we have $u = \alpha v$ for some $\alpha \in K \setminus \{0\}$ and it follows $\herm u u = \alpha^2 > 0$.
\end{proof}

We summarise what we have shown.

\begin{theorem} \label{thm:ordered-field}
Let $(X, \perp)$ be a divisibly transitive orthoset of rank $\geq 4$. Then there is an ordered field $K$ and a positive-definite quadratic space $H$ over $K$, possessing a unit vector in each one-dimensional subspace, such that $(X, \perp)$ is isomorphic to $(P(H), \perp)$.
\end{theorem}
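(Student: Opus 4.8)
The plan is to assemble the structural results proved earlier in this section; the theorem is essentially a packaging statement. First I would invoke Lemma~\ref{lem:DOS-hermitian-space}: since $(X,\perp)$ is divisibly transitive of rank $\geq 4$, there is a Hermitian space $H$ over some $\star$-sfield $K$, possessing a unit vector in each one-dimensional subspace, such that $(X,\perp)$ is isomorphic to $(P(H),\perp)$. This is precisely the situation fixed in the paragraph preceding Lemma~\ref{lem:simple-rotation-group}, so every subsequent lemma of the section applies to this particular $H$, and the isomorphism produced here is the one that will appear in the conclusion.

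Next I would upgrade $K$ from a $\star$-sfield to an ordered field in two steps. By Lemma~\ref{lem:K-is-field}, $H$ is a quadratic space; that is, $K$ is commutative and the involution $^\star$ is the identity, so that $\herm{\cdot}{\cdot}$ is an ordinary symmetric bilinear form with an associated quadratic form. By Lemma~\ref{lem:K-is-ordered}, $K$ is formally real, hence admits at least one order; and, for any order chosen on $K$, the same lemma gives that the quadratic form on $H$ is positive definite. Combining this with the unit-vector property inherited from Lemma~\ref{lem:DOS-hermitian-space} yields exactly the asserted representation, so the proof proper is a three-line citation of Lemmas~\ref{lem:DOS-hermitian-space}, \ref{lem:K-is-field}, and~\ref{lem:K-is-ordered}, with the minor caveat that the order on $K$ is not canonical --- one simply picks any ordering, and positive-definiteness then holds with respect to it.

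The genuine content is not in this assembly but in the two lemmas just cited, and I would flag Lemma~\ref{lem:K-is-field} as the crux: proving that the involution on $K$ is trivial is where divisible transitivity really bites. The hypothesis (DT0) that each $R_{ef}$ be abelian, transported through the isomorphism $P \colon \R(H,F) \to R_{\lin u \lin v}$ of Lemma~\ref{lem:simple-rotation-group}, forces a transitively acting abelian group of $2\times 2$ matrices over $K$ on a two-dimensional $F$; pairing two generic such rotations and using commutativity together with the characteristic being $\neq 2$ pins the involution down first as conjugation by a unit of $K$ and then as the identity. The formally real --- indeed Pythagorean --- property of $K$, Lemma~\ref{lem:K-is-Pythagorean}, in turn comes from anisotropy of the Hermitian form combined with the existence of a unit vector in every line. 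So the hard part is already done upstream, and for the present theorem the only thing to be careful about is quoting the right lemmas in the right order and not claiming a distinguished ordering of $K$.
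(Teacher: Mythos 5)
Your proposal is correct and follows exactly the paper's route: the paper's own proof is literally the sentence ``We summarise what we have shown,'' i.e.\ the theorem is the assembly of Lemma~\ref{lem:DOS-hermitian-space} (Hermitian representation with unit vectors), Lemma~\ref{lem:K-is-field} (trivial involution, commutative $K$), and Lemma~\ref{lem:K-is-ordered} (formal reality and positive definiteness under any chosen order). Your additional remarks correctly locate the real work upstream and the non-canonicity of the ordering; nothing is missing.
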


\section{Quasiprimitive orthosets}
\label{sec:infinitesimals}

We have dealt so far with orthosets arising from positive-definite quadratic spaces over an ordered field $K$. We certainly wonder under which natural condition on the orthoset $K$ is actually the field of real numbers. A promising way to approach this question seems, however, hard to define and we are actually not really convinced that a solution is feasible in the present framework. Here, we will rather discuss the related problem of finding reasonable conditions under which $K$ is a subfield of $\Reals$. We consider to this end divisibly transitive orthosets with a property that is once more related to transitivity.

Let $(X,\perp)$ be a divisibly transitive orthoset. We refer to an automorphism contained in a subgroup $R_{ef}$, where $e$ and $f$ are distinct elements of $X$, as a {\it simple rotation}. The identity of $R_{ef}$ is called the {\it trivial} rotation. The subgroup of $\Aut(X)$ generated by all simple rotations will be called the {\it rotation group} of $(X,\perp)$, denoted by $\R(X)$.

\begin{definition}
We call an orthoset $(X,\perp)$ {\it quasiprimitive} if, for any non-trivial simple rotation $\rho$, the normal subgroup of $\R(X)$ generated by $\rho$ acts transitively on $X$.
\end{definition}

That is, we call an orthoset quasiprimitive if the transformation group $\R(X)$ has this property; see, e.g., \cite{Prae}. More explicitly, let $\rho^\tau = \tau^{-1} \rho \tau$ be the conjugate of some automorphism $\rho$ via a further automorphism $\tau$. The quasiprimitivity of $(X,\perp)$ means that, given any simple rotation $\rho \neq \id$ and any two points $e, f \in X$, there are $\tau_1, \ldots, \tau_k \in \R(X)$ such that $f = \rho^{\tau_k} \ldots \rho^{\tau_1} (e)$.

\begin{example} \label{ex:real-Hilbert-space-4}
Let $H$ be a real Hilbert space of dimension $\geq 4$. By Example \ref{ex:real-Hilbert-space-3}, $(P(H),\perp)$ is a divisibly transitive orthoset. Moreover, the simple rotations of \linebreak $(P(H),\perp)$ are exactly the automorphisms induced by simple rotations of $H$, and $\R(P(H)) = \{ P(U) \colon U \in \SO(H) \}$.

We claim that $(P(H),\perp)$ is quasiprimitive. Let $\rho \neq \id$ be a simple rotation of $(P(H),\perp)$ and let $\lin u, \lin v \in P(H)$ be distinct. Let $S$ be a $3$-dimensional subspace of $H$ such that $u, v \in S$. $\SO(S)$, the special orthogonal group of $S$, is simple; see, e.g., \cite{Gro}. Hence the conjugates of any $U \in \SO(S)$, distinct from the identity, generate the whole group $\SO(S)$ and since $\SO(S)$ acts transitively on $P(S)$, some finite product of conjugates of $U$ maps $\lin u$ to $\lin v$. We conclude that some finite product of conjugates of $\rho$ maps $\lin u$ to $\lin v$. 
\end{example}

Let us now fix a positive-definite quadratic space $H$ over an ordered field $K$ such that each one-dimensional subspace contains a unit vector and assume that the orthoset $(P(H),\perp)$ is divisibly transitive.

We may describe the rotation group $\R(P(H))$ as follows.

\begin{proposition} \label{prop:rotations-and-SOH}
The map
\[ P \colon \SO(H) \to \R(P(H)) \]
is a surjective homomorphism.
\end{proposition}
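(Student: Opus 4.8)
The plan is to exhibit $P\colon \SO(H)\to\R(P(H))$ as a well-defined group homomorphism and then to check surjectivity separately. Well-definedness and the homomorphism property are essentially inherited from Wigner's theorem: by Theorem \ref{thm:Wigner}, $P\colon \U(H)\to\Aut(P(H))$ is a group homomorphism, and since $H$ is a quadratic space (Lemma \ref{lem:K-is-field}), $\SO(H)$ is a subgroup of $\U(H)=\O(H)$; restricting the homomorphism $P$ to this subgroup gives a homomorphism into $\Aut(P(H))$. So the only real content is that the image actually lands in $\R(P(H))$ and that every element of $\R(P(H))$ is hit.

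For the first point, recall from Section \ref{sec:automorphisms} that $\SO(H)$ is generated by the simple rotations, i.e.\ by the elements of the groups $\SO(H,F)$ with $F$ two-dimensional. By Lemma \ref{lem:RHF-is-SOHF} we have $\SO(H,F)=\R(H,F)$, and by Lemma \ref{lem:simple-rotation-group}(i), $P$ maps $\R(H,F)$ isomorphically onto $R_{\lin u\lin v}$ for a basis $u,v$ of $F$. Thus $P$ sends each simple rotation of $H$ to a simple rotation of $(P(H),\perp)$, and since these generate $\SO(H)$, the image $P(\SO(H))$ is contained in the subgroup of $\Aut(P(H))$ generated by the simple rotations, which is exactly $\R(P(H))$ by definition. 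Conversely, $\R(P(H))$ is by definition generated by the simple rotations of $(P(H),\perp)$, and every such simple rotation lies in some $R_{\lin u\lin v}=P(\R(H,\lin{u,v}))=P(\SO(H,\lin{u,v}))\subseteq P(\SO(H))$; hence $\R(P(H))\subseteq P(\SO(H))$, and surjectivity follows.

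The step I expect to require the most care is the bookkeeping around the two-dimensional subspaces: one must make sure that ``simple rotation of $(P(H),\perp)$'' (defined in Section \ref{sec:infinitesimals} as any element of any $R_{ef}$) matches ``$P(U)$ for $U$ a simple rotation of $H$'' (defined in Section \ref{sec:automorphisms} via $\SO(H,F)$), and this is precisely the identification $R_{\lin u\lin v}=P(\R(H,\lin{u,v}))=P(\SO(H,\lin{u,v}))$ supplied by Lemmas \ref{lem:simple-rotation-group} and \ref{lem:RHF-is-SOHF}. Once that correspondence between generators is in place, the generated subgroups coincide and both inclusions are immediate, so no further argument is needed.
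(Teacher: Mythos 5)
Your proposal is correct and follows essentially the same route as the paper: it identifies the generators of $\SO(H)$ (the simple rotations, via $\SO(H,F)=\R(H,F)$ from Lemma \ref{lem:RHF-is-SOHF}) with the generators of $\R(P(H))$ (the subgroups $R_{\lin u\lin v}$, via the isomorphisms of Lemma \ref{lem:simple-rotation-group}) and invokes the homomorphism property from Theorem \ref{thm:Wigner}. You merely spell out both inclusions that the paper leaves implicit.
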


\begin{proof}
For any linearly independent vectors $u, v \in H$, the map $P \colon \R(H, \lin{u,v}) \to R_{\lin u \lin v}$ is, by Lemma \ref{lem:simple-rotation-group}, an isomorphism. By Lemma \ref{lem:RHF-is-SOHF} and our remarks at end of Section \ref{sec:automorphisms}, the subgroups $\R(H,F)$, where $F$ is a two-dimensional subspace, generate $\SO(H)$. By definition, the subgroups $R_{\lin u \lin v}$ of $\Aut(X)$, where $\lin u, \lin v \in P(H)$ are distinct, generate $\R(P(H))$. By Theorem \ref{thm:Wigner}, $P \colon \O(H) \to \Aut(P(H))$ is a homomorphism, hence the assertion follows.
\end{proof}

The following definitions and facts are due to Holland \cite{Hol1}, for further details see also \cite{Vet1}. We call an element $\alpha \in K$ {\it infinitesimal} if $\abs{\alpha} < \tfrac 1 n$ for all $n \in \Naturals \setminus \{0\}$, and we call $\alpha \in K$ {\it finite} if $\abs{\alpha} < n$ for some $n \in \Naturals \setminus \{0\}$. We denote the set of infinitesimal and finite elements by $I_K$ and $F_K$, respectively. $I_K$ and $F_K$ are additive subgroups of $K$ and are closed under multiplication. Furthermore, $F_K \setminus I_K$ is a multiplicative subgroup of $K \setminus \{0\}$, and we have $I_K \cdot F_K = I_K$ and $F_K + I_K = F_K$.

Likewise, a vector $x \in H$ is called {\it infinitesimal} if so is $\herm x x$, and $x$ is called {\it finite} if so is $\herm x x$. The set of infinitesimal and finite vectors is denoted by $I_H$ and $F_H$, respectively. $I_H$ and $F_H$ are subgroups of $H$ and we have $I_K \cdot F_H = F_K \cdot I_H = I_H$, $\,F_K \cdot F_H = F_H$, and $F_H + I_H = F_H$. Furthermore, $\herm x y \in I_K$ if $x, y \in F_H$ and at least one of $x$ and $y$ is infinitesimal.

If the only infinitesimal element of $K$ is $0$, $K$ is called {\it Archimedean}. In this case, $K$ is isomorphic to a subfield of $\Reals$ equipped with the inherited natural order; see, e.g.,~\cite{Fuc}.

For $\lin x, \lin y \in P(H)$, we put $\lin x \similar \lin y$ if there are non-infinitesimal, finite vectors $x' \in \lin x$ and $y' \in \lin y$ such that $x' - y' \in I_H$.

\begin{lemma} \label{lem:similar}
\begin{itemize}

\item[\rm (i)] $\similar$ is an equivalence relation, which is the equality if and only if $K$ is Archimedean.

\item[\rm (ii)] For any orthogonal vectors $x, y \in H\withoutzero$, we have $\lin x \nsimilar \lin y$.

\item[\rm (iii)] The relation $\similar$ is preserved by any orthogonal operator on $H$. That is, for any $U \in \O(H)$ and $\lin x, \lin y \in P(H)$, we have $\lin x \similar \lin y$ if and only if $U(\lin x) \similar U(\lin y)$.

\end{itemize}
\end{lemma}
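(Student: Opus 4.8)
The three parts are largely bookkeeping on the arithmetic of the valuation-like structures $I_K \subseteq F_K \subseteq K$ and $I_H \subseteq F_H \subseteq H$ recalled just before the statement, together with the positive-definiteness of $\herm{\cdot}{\cdot}$. First I would fix, for each $\lin x \in P(H)$, a \emph{unit} representative $\hat x \in \lin x$; these exist by hypothesis, and they are automatically finite and non-infinitesimal since $\herm{\hat x}{\hat x}=1$. The point is that $\lin x \similar \lin y$ holds if and only if one can choose the scalar multiples so that $\hat x - \lambda \hat y \in I_H$ for a suitable non-infinitesimal finite $\lambda$; working with unit vectors removes the ambiguity in the definition and makes the scalars comparable.

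For part (i), reflexivity is immediate and symmetry is clear from $x'-y'\in I_H \Leftrightarrow y'-x'\in I_H$. For transitivity, suppose $\hat x - \lambda\hat y \in I_H$ and $\hat y - \mu\hat z \in I_H$ with $\lambda,\mu \in F_K\setminus I_K$; then $\hat x - \lambda\mu\hat z = (\hat x - \lambda\hat y) + \lambda(\hat y - \mu\hat z) \in I_H + F_K\cdot I_H = I_H$, and $\lambda\mu \in F_K\setminus I_K$ because $F_K\setminus I_K$ is a multiplicative group; so $\lin x \similar \lin z$. For the ``equality iff Archimedean'' clause: if $K$ is Archimedean then $I_K=\{0\}$, hence $I_H=\{0\}$, and $\hat x - \lambda\hat y = 0$ forces $\lin x = \lin y$. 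Conversely, if $K$ has a non-zero infinitesimal $\delta$, pick orthogonal unit vectors $u,v$ (rank $\geq 4$, so they exist) and set $x' = u + \delta v$; then $\herm{x'}{x'} = 1 + \delta^2 \in F_K\setminus I_K$, so $x'$ is finite and non-infinitesimal, $x' - u \in I_H$, yet $\lin{x'} \neq \lin u$ since $u$ and $v$ are linearly independent. Hence $\similar$ is strictly coarser than equality.

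For part (ii), let $x,y$ be orthogonal and non-zero, and suppose for contradiction $\lin x \similar \lin y$, say $x' \in \lin x$, $y' \in \lin y$ finite non-infinitesimal with $x' - y' \in I_H$. Then $\herm{x'-y'}{x'-y'} \in I_K$; expanding and using $\herm{x'}{y'} = \herm{y'}{x'} = 0$ gives $\herm{x'}{x'} + \herm{y'}{y'} \in I_K$. But $\herm{x'}{x'}$ and $\herm{y'}{y'}$ are both strictly positive (positive-definiteness) and non-infinitesimal, hence their sum is non-infinitesimal — a contradiction. For part (iii), a unitary $U$ satisfies $\herm{U(x)}{U(x)} = \herm{x}{x}$, so $U$ sends finite vectors to finite vectors, infinitesimal to infinitesimal, and non-infinitesimal to non-infinitesimal; since $U$ is additive and linear, $x' - y' \in I_H$ implies $U(x') - U(y') = U(x'-y') \in I_H$, and $U(x') \in U(\lin x)$, $U(y') \in U(\lin y)$. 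The converse direction applies the same argument to $U^{-1}$. I do not foresee a genuine obstacle here; the only point requiring minor care is (i), where one must verify the multiplier $\lambda\mu$ stays in $F_K\setminus I_K$, which is exactly one of the recalled closure facts.
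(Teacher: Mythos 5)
Your proposal is correct and follows essentially the same route as the paper: parts (ii) and (iii) are verbatim the paper's arguments, and for transitivity in (i) the paper likewise reduces to comparing two representatives $y, y'$ of the same line via a scalar $\alpha$ with $\alpha^2 = \herm{y'}{y'}\herm{y}{y}^{-1} \in F_K \setminus I_K$ — your normalisation to unit representatives is just a cosmetic repackaging of the same closure facts about $I_K$, $F_K$, $I_H$, $F_H$. The Archimedean/non-Archimedean dichotomy is also handled identically, via $u + \delta v$ for an infinitesimal $\delta \neq 0$.
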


\begin{proof}
Ad (i): $\similar$ is clearly reflexive and symmetric. To see that $\similar$ is also transitive, let $x, y, y', z \in F_H \setminus I_H$ such that $x - y, \, y' - z \in I_H$ and $y' = \alpha y$ for some non-zero $\alpha \in K$. Then $\alpha^2 = \herm{y'}{y'} {\herm y y}^{-1} \in F_K \setminus I_K$ and hence $\alpha z \in F_H \setminus I_H$. We conclude that $x - \alpha z \in I_H$.

If $K$ is Archimedean, $x \in I_H$ implies $\herm x x = 0$ and hence $x = 0$. Hence $\similar$ is the equality. Conversely, if $K$ is not Archimedean, let $\alpha \in I_K \setminus \{0\}$ and let $u, v \in H$ be orthogonal unit vectors. Then $\lin u \similar \lin{u + \alpha v}$ because $u, u + \alpha v \in F_H \setminus I_H$ and $\alpha v \in I_H$. Hence $\similar$ is not the equality.

Ad (ii): For any finite, non-infinitesimal vectors $x' \in \lin x$ and $y' \in \lin y$, we have that $\herm{x'-y'}{x'-y'} = \herm{x'}{x'} + \herm{y'}{y'}$ is not infinitesimal because $\herm{x'}{x'}$ and $\herm{y'}{y'}$ are non-infinitesimal and positive. Hence $\lin x \nsimilar \lin y$.

Ad (iii): Assume that $\lin x \similar \lin y$. This means $x'-y' \in I_H$ for some $x' \in \lin x$ and $y' \in \lin y$ such that $x', y' \in F_H \setminus I_H$. The image of an infinitesimal vector under an orthogonal operator is obviously infinitesimal as well. Hence it follows $U(x')-U(y') \in I_H$. Furthermore, $U(x') \in \lin{U(x)} = U(\lin x)$ and similarly $U(y') \in U(\lin y)$. It is obvious again that $U(x'), U(y') \in F_H \setminus I_H$. We conclude $U(\lin x) \similar U(\lin y)$. 
\end{proof}

\begin{lemma} \label{lem:small-unitary-operator}
Assume that $K$ is non-Archimedean. Then there is a $U \in \SO(H)$ such that $P(U)$ is distinct from the identity and $U(x) - x$ is infinitesimal for any finite vector $x$.
\end{lemma}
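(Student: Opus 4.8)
The plan is to produce the desired rotation explicitly as a simple rotation acting on a suitable two-dimensional subspace, exploiting the non-Archimedean hypothesis to choose the rotation ``small''. First I would pick an infinitesimal $\alpha \in I_K \setminus \{0\}$ and two orthogonal unit vectors $u, v \in H$, and set $F = \lin{u,v}$. Since $K$ is Pythagorean (Lemma~\ref{lem:K-is-Pythagorean}), there is a $\gamma \in K$ with $\gamma^2 = 1 + \alpha^2$; note $\gamma \in F_K \setminus I_K$, in fact $\gamma^{-1}$ is finite and non-infinitesimal. Then the matrix $\begin{smm} \gamma^{-1} & -\alpha\gamma^{-1} \\ \alpha\gamma^{-1} & \gamma^{-1} \end{smm}$ lies in $\SO(H,F)$, because its columns are orthonormal: $(\gamma^{-1})^2 + (\alpha\gamma^{-1})^2 = \gamma^{-2}(1+\alpha^2) = 1$ and the determinant is $\gamma^{-2}(1+\alpha^2) = 1$. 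Call the resulting operator $U \in \SO(H)$, extended by the identity on $F\c$.

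Next I would check the two required properties of $U$. For the first, $P(U)$ is not the identity because $U(u) = \gamma^{-1} u + \alpha\gamma^{-1} v$ is not a scalar multiple of $u$ (as $\alpha \neq 0$ and $v$ is independent from $u$), so $\lin{U(u)} \neq \lin u$. For the second, I must show $U(x) - x$ is infinitesimal for every finite $x \in H$. Decompose $x = x_F + x_\perp$ with $x_F \in F$ and $x_\perp \in F\c$; then $U(x) - x = U(x_F) - x_F$, so it suffices to treat $x \in F$. Writing $x = \beta u + \delta v$ with $\beta, \delta \in F_K$ (these are finite since $x$ is finite and $u,v$ are unit vectors, using that the coordinates of a finite vector with respect to an orthonormal basis are finite), a direct computation gives $U(x) - x = \big((\gamma^{-1}-1)\beta - \alpha\gamma^{-1}\delta\big) u + \big(\alpha\gamma^{-1}\beta + (\gamma^{-1}-1)\delta\big) v$. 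Now $\gamma^{-1} - 1$ is infinitesimal: indeed $\gamma^2 - 1 = \alpha^2 \in I_K$, so $(\gamma-1)(\gamma+1) \in I_K$ with $\gamma+1$ finite and non-infinitesimal, whence $\gamma - 1 \in I_K$, and then $\gamma^{-1} - 1 = -\gamma^{-1}(\gamma-1) \in I_K$ as well. Since $\alpha, \gamma^{-1}-1 \in I_K$, $\gamma^{-1} \in F_K$, and $\beta, \delta \in F_K$, both coordinates of $U(x)-x$ lie in $I_K$, using $I_K \cdot F_K = I_K$ and that $I_K$ is an additive subgroup. Hence $U(x) - x \in I_H$.

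The only subtlety worth flagging is the claim that the coordinates of a finite vector with respect to an orthonormal basis of a finite-dimensional subspace are again finite; this follows because if $x = \beta u + \delta v$ with $u \perp v$ unit, then $\herm x x = \beta^2 + \delta^2$, and in an ordered field $\beta^2, \delta^2 \leq \beta^2 + \delta^2$, so if $\herm x x$ is finite then so are $\beta^2$ and $\delta^2$, hence $\beta$ and $\delta$. With this in hand the verification is routine arithmetic in $F_K$ and $I_K$, using the closure properties recalled just before Lemma~\ref{lem:similar}. I do not expect any genuine obstacle: the real content is simply writing down the right rotation matrix with an infinitesimal rotation ``angle'' and observing that it moves the relevant line while displacing every finite vector only infinitesimally. (Alternatively, one could deduce the result more abstractly from Lemma~\ref{lem:similar}(i) together with the transitivity of $\R(H,F)$ on $P(F)$, picking $U \in \R(H,F) = \SO(H,F)$ with $\lin{U(u)} = \lin{u+\alpha v} \similar \lin u$, but the explicit matrix makes the infinitesimality of $U(x)-x$ on all of $H$ immediate.)
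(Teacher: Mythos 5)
Your construction is essentially the paper's own: the paper picks $\epsilon=\gamma^{-1}\in K^+$ and sets $U(u)=\epsilon(u+\alpha v)$, $U(v)=\epsilon(-\alpha u+v)$, which is exactly your matrix, and it verifies infinitesimality of $U(x)-x$ via the same decomposition of $x$ into its $\lin{u,v}$-coordinates plus an orthogonal part. The one point you should make explicit is that $\gamma$ must be chosen positive (the paper takes $\epsilon\in K^+$), since otherwise $\gamma+1$ could be infinitesimal and your deduction that $\gamma^{-1}-1\in I_K$ would fail.
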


\begin{proof}
Let $u$ and $v$ be orthogonal unit vectors and let $\alpha \in I_K \setminus \{0\}$. Furthermore, let $\epsilon \in K^+$ be such that $u' = \epsilon (u + \alpha v)$ is a unit vector. Then $\epsilon^2 = \frac 1 {1+\alpha^2} \in F_K \setminus I_K$ and hence $\epsilon \in F_K \setminus I_K$. Moreover, also $v' = \epsilon(-\alpha u + v)$ is a unit vector. In fact, $u'$ and $v'$ are orthogonal unit vectors spanning $\lin{u,v}$. Moreover, $u-u' = \alpha^2 \frac{\epsilon^2}{1+\epsilon} \, u - \alpha \epsilon \, v \in I_H$ and similarly $v-v' \in I_H$.

Let $U \in \O(H)$ be such that $U(u) = u'$, $\, U(v) = v'$, and $U(w) = w$ for any $w \perp u, v$. Then $U \in \SO(H)$ because $\det U|_{\lin{u,v}} = \epsilon^2 (1+\alpha^2) = 1$. Let $x \in H$ be finite. Then we have $x = \herm{x}{u} u + \herm{x}{v} v + w$, where $w \perp u,v$. It follows $U(x) - x = \herm{x}{u} (u'-u) + \herm{x}{v} (v'-v) \in I_H$.
\end{proof}

\begin{lemma}
If $(P(H),\perp)$ is quasiprimitive, then $K$ is Archimedean and hence a subfield of $\Reals$.
\end{lemma}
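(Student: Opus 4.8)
The plan is to argue by contradiction: assume $K$ is \emph{not} Archimedean and derive a failure of quasiprimitivity. The starting point is Lemma~\ref{lem:small-unitary-operator}, which under this assumption yields an operator $U \in \SO(H)$ with $P(U) \neq \id$ such that $U(x) - x$ is infinitesimal for every finite vector $x$. First I would observe that the $U$ produced there fixes pointwise the orthogonal complement of a two-dimensional subspace $F = \lin{u,v}$, so $U \in \SO(H,F)$. By Lemma~\ref{lem:RHF-is-SOHF} this equals $\R(H,F)$, and then Lemma~\ref{lem:simple-rotation-group} shows $\rho := P(U)$ lies in $R_{\lin u \lin v}$, i.e.\ $\rho$ is a non-trivial simple rotation of $(P(H),\perp)$. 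Thus quasiprimitivity applies to $\rho$.

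The heart of the argument is that $\rho$, and indeed every element of the normal subgroup $N$ of $\R(P(H))$ generated by $\rho$, moves each point of $P(H)$ only within its $\similar$-class. I would establish this in three steps. (a) For every $\lin x \in P(H)$ we have $\rho(\lin x) \similar \lin x$: choose a unit vector $x' \in \lin x$ (available by hypothesis); it is finite and non-infinitesimal, $U(x')$ is a unit vector in $\rho(\lin x)$, and $U(x') - x' \in I_H$, which is exactly the defining condition for $\similar$. (b) For every $\tau \in \R(P(H))$ and every $\lin x$, the conjugate satisfies $\rho^\tau(\lin x) \similar \lin x$: since $\R(P(H)) = P(\SO(H))$ by Proposition~\ref{prop:rotations-and-SOH} and $\similar$ is invariant under every orthogonal operator by Lemma~\ref{lem:similar}(iii), we get $\rho^\tau(\lin x) = \tau^{-1}\rho\tau(\lin x) \similar \tau^{-1}\tau(\lin x) = \lin x$, applying $\tau^{-1}$ to $\rho(\tau\lin x) \similar \tau\lin x$; the symmetry of $\similar$ then gives the same property for $(\rho^\tau)^{-1}$. (c) Any finite product $\sigma_k \cdots \sigma_1$ of such conjugates and their inverses satisfies $\sigma_k \cdots \sigma_1(\lin x) \similar \lin x$, by repeated use of the transitivity of $\similar$ from Lemma~\ref{lem:similar}(i). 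Hence the $N$-orbit of any point is contained in a single $\similar$-class.

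To conclude: by quasiprimitivity $N$ acts transitively on $X = P(H)$, so all of $P(H)$ lies in one $\similar$-class. But the rank of $H$ is $\geq 4$, so $P(H)$ contains two orthogonal elements $\lin x, \lin y$, and Lemma~\ref{lem:similar}(ii) gives $\lin x \nsimilar \lin y$ — a contradiction. Therefore $K$ is Archimedean, and by the fact recalled before Lemma~\ref{lem:similar} it is then isomorphic to a subfield of $\Reals$.

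I expect the main obstacle to be the bookkeeping in the second paragraph: one must be careful that $\rho$ is a simple rotation in the \emph{orthoset} sense (so quasiprimitivity is actually available for it) and that the passage from single conjugates of $\rho$ to arbitrary elements of the normal subgroup $N$ — which are products of conjugates of both $\rho$ and $\rho^{-1}$ — is handled using only that $\similar$ is an $\O(H)$-invariant equivalence relation, never any finer structure.
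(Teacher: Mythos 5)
Your proposal is correct and follows essentially the same route as the paper: assume $K$ non-Archimedean, take the operator $U$ from Lemma~\ref{lem:small-unitary-operator}, show that $P(U)$ and all its conjugates under $\R(P(H)) = P(\SO(H))$ move each point only within its $\similar$-class, and contradict transitivity via Lemma~\ref{lem:similar}(ii). You are somewhat more explicit than the paper in checking that $P(U)$ is genuinely a simple rotation of the orthoset and in handling inverses and products of conjugates, but these are elaborations of the same argument, not a different one.
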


\begin{proof}
Let us assume that $(P(H),\perp)$ is quasiprimitive and $K$ is not Archimedean. In accordance with Lemma \ref{lem:small-unitary-operator}, choose some $U \in \SO(H)$ such that $P(U)$ is distinct from the identity and $U(x) - x \in I_H$ for any $x \in F_H$. Then we have that $U(\lin x) \similar \lin x$ for any $x \in H\withoutzero$.

Furthermore, for any further orthogonal operator $V$, $\,V^{-1} U V$ has the same properties. Indeed, for any $x \in F_H$, we have $\herm{V^{-1} U V(x)-x}{V^{-1} U V(x)-x} = \herm{U V(x)-V(x)}{U V(x)-V(x)} \in I_K$, that is, $V^{-1} U V(x)-x \in I_H$. Again it follows that $V^{-1} U V(\lin x) \similar \lin x$ for any $x \in H\withoutzero$.

We conclude that the orbit of any $\lin x \in P(H)$ under the action of conjugates of $U$ is contained in the $\similar$-class of $\lin x$. The latter is, by Lemma \ref{lem:similar}(ii), a proper subset of $P(H)$. In view of Proposition \ref{prop:rotations-and-SOH}, it follows that $(X,\perp)$ is not quasiprimitive.
\end{proof}

We may summarise the results of this section as follows.

\begin{theorem}
Let $(X,\perp)$ be an orthoset of rank $\geq 4$. Assume that $(X,\perp)$ is divisibly transitive and quasiprimitive. Then there is a positive-definite quadratic space $H$ over a subfield of $\Reals$, possessing a unit vector in each one-dimensional subspace, such that $(X,\perp)$ is isomorphic to $(P(H),\perp)$.
\end{theorem}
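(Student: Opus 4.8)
The statement is essentially the conjunction of Theorem~\ref{thm:ordered-field} with the last lemma of this section, so the plan is to chain these two results together, the only genuine task being to transport the two hypotheses on $(X,\perp)$ along the orthoset isomorphism supplied by the first of them.

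First I would apply Theorem~\ref{thm:ordered-field}: since $(X,\perp)$ is divisibly transitive of rank $\geq 4$, there are an ordered field $K$ and a positive-definite quadratic space $H$ over $K$, possessing a unit vector in every one-dimensional subspace, together with an isomorphism of orthosets $\theta \colon (X,\perp) \to (P(H),\perp)$. In view of the last lemma of the section, it then suffices to see that $(P(H),\perp)$ is itself divisibly transitive and quasiprimitive, for that lemma will yield that $K$ is Archimedean and hence a subfield of $\Reals$; the remaining properties of $H$ are already in the form demanded by the theorem.

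Next I would check that $(P(H),\perp)$ inherits both hypotheses from $(X,\perp)$. This is immediate, because divisible transitivity and quasiprimitivity are both formulated purely in terms of the orthogonality relation and the automorphism group: $\theta$ induces a group isomorphism $\Aut(X) \to \Aut(P(H))$ which carries each subgroup $R_{ef}$ onto $R_{\theta(e)\theta(f)}$, hence carries the rotation group $\R(X)$ onto $\R(P(H))$ and takes non-trivial simple rotations to non-trivial simple rotations. Consequently (DT0), (DT1) and (DT2) hold for $(P(H),\perp)$ verbatim, and for any non-trivial simple rotation of $(P(H),\perp)$ the normal subgroup it generates in $\R(P(H))$ acts transitively on $P(H)$; that is, $(P(H),\perp)$ is quasiprimitive.

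Finally I would invoke the last lemma of the section for this $H$: since $(P(H),\perp)$ is divisibly transitive and quasiprimitive, $K$ is Archimedean, and hence a subfield of $\Reals$. Combined with Theorem~\ref{thm:ordered-field}, this gives precisely the asserted representation of $(X,\perp)$ by a positive-definite quadratic space over a subfield of $\Reals$ with a unit vector in each line. The proof presents no real obstacle; the only point requiring (routine) care is the transfer of the two structural hypotheses across $\theta$, which works without effort because both are stated in purely orthoset-theoretic language.
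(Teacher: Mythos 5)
Your proposal is correct and matches the paper's intent exactly: the theorem is stated there as a summary of Theorem~\ref{thm:ordered-field} together with the final lemma of the section, and your only added content --- transporting divisible transitivity and quasiprimitivity across the orthoset isomorphism to $(P(H),\perp)$ --- is the routine step the paper leaves implicit.
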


{\bf Acknowledgement.} The author acknowledges the support by the bilateral Austrian Science Fund (FWF) project I 4579-N and Czech Science Foundation (GA\v CR) project 20-09869L ``The many facets of orthomodularity''.

\end{document}